\documentclass[reqno]{amsart}
\usepackage[utf8]{inputenc}
\usepackage[T1]{fontenc}
\usepackage{amsmath,amssymb}
\usepackage{tikz}
\usetikzlibrary{patterns}
\theoremstyle{plain}
\newtheorem{theorem}{Theorem}[section]
\newtheorem{lemma}[theorem]{Lemma}
\newtheorem{proposition}[theorem]{Proposition}
\newtheorem{corollary}[theorem]{Corollary}
\theoremstyle{definition}
\newtheorem{definition}[theorem]{Definition}
\newtheorem{example}[theorem]{Example}
\theoremstyle{remark}
\newtheorem{remark}[theorem]{Remark}

\newcommand{\R}{\mathbb{R}}
\newcommand{\Sph}{\mathbb{S}}
\newcommand{\dist}{\mathrm{d}}
\newcommand{\Unp}{\mathrm{Unp}}
\newcommand{\reach}{\mathrm{reach}}
\newcommand{\polreach}{\mathrm{polreach}}
\newcommand{\conreach}{\mathrm{conreach}}
\newcommand{\Nor}{\mathrm{Nor}}
\newcommand{\Per}{\mathrm{Per}}
\newcommand{\vol}{\lambda_2}
\newcommand{\feet}{\Pi}

\begin{document}

\title{Conic reach and polynomial parallel volume in the plane}

\author{Alejandro Cholaquidis}
\address{Centro de Matem\'atica, Facultad de Ciencias, Universidad de la
Rep\'ublica, Igu\'a 4225, 11400 Montevideo, Uruguay}
\email{acholaquidis@cmat.edu.uy}

\subjclass[2020]{28A75, 52A38, 53C65, 62G05}
\keywords{Parallel volume, local Steiner formula, positive reach,
polynomial reach, conic singularities, set estimation}

\begin{abstract}
For a compact set $S\subset\R^2$, the local Steiner formula of Hug, Last
and Weil expresses the parallel volume $V_S(t)$ through the proximal
normal bundle and the truncated fiber lengths $\min\{t,\delta_S\}$. We
introduce conic reach, a geometric condition with two requirements: $S$
coincides with a cone near each point of a finite, well-separated singular
set, and every proximal normal fiber away from those points has length at
least $\rho$. In the plane, these requirements force every link to be a
finite union of circular arcs whose complementary gaps have width bounded
below. Computing the feet-localized tube of each cone and combining it
with the local Steiner formula, we show that $V_S$ is a polynomial of
degree at most two on $(0,\rho)$, with explicit coefficients;
in particular $\polreach(S)\ge\conreach(S)$. A one-dimensional converse
shows that a quadratic wall contribution forces a linear cut function.
Compact domains with piecewise-$C^2$ boundary, uniformly wedge-like at
their reentrant corners, have positive conic reach, and their volume
coefficients are given by a Gauss--Bonnet-type formula. For the L-shaped
polygon the three invariants separate: $\reach(L)=0$,
$\conreach(L)=\tfrac13$, $\polreach(L)=1$. A cuspidal notch, two
overlapping discs and a Cantor fan of segments show that tangential
contact, curvature at a reentrant corner and degenerating link gaps each
destroy polynomiality.
\end{abstract}

\maketitle

\section{Introduction}\label{sec:intro}

Let $S\subset\R^2$ be compact, let
$S_t:={y\in\R^2:\dist(y,S)\le t},$ $V_S(t):=\lambda_2(S_t)-\lambda_2(S),$
and call $V_S$ the parallel-volume function of $S$. For a planar convex body,
Steiner's formula gives $V_S(t)=\Per(S)t+\pi t^2,$ $t>0.$ Federer \cite{federer} extended this formula to sets of positive reach: on
$(0,\reach(S))$, the function $V_S$ agrees with a polynomial of degree at
most two, whose coefficients are curvature measures.

Positive reach excludes elementary nonconvex sets. An L-shaped polygon has
reach zero, since points on the exterior bisector of its reentrant corner
have two nearest points arbitrarily close to the boundary. Nevertheless,
$V_L(t)=8t+ (5\pi/4 -1)t^2$ 
for all sufficiently small $t$. Thus polynomial parallel volume near the
origin is strictly weaker than positive reach. Following \cite{CCM}, we
define the polynomial reach $\polreach(S)$ as the supremum of the $R\ge0$
such that $V_S$ agrees on $[0,R]$ with a polynomial of degree at most two;
see Definition~\ref{def:polreach}. The question addressed here is which
geometric conditions guarantee $\polreach(S)>0$ in the plane.

We introduce a conic certificate of radius $\rho$
(Definition~\ref{def:conreach}). Near each point $\sigma$ of a finite,
well-separated singular set, the set coincides exactly with a cone on a ball
of radius $3r_\sigma$; outside the corresponding cores, every proximal
normal fiber has length at least $\rho$. The supremum of all certifiable
values of $\rho$ is the conic reach $\conreach(S)$. The core radii depend on
the local geometry: sharper singularities require larger cores relative to
$\rho$; see Remark~\ref{rem:roles}.

No regularity of the cones is assumed. In the plane it follows from the
certificate itself. Theorem~\ref{thm:link} shows that every link is a finite
union of closed circular arcs and that each complementary gap has width at
least
$2\arctan (\rho/r_\sigma ).$
Thus finiteness and angular separation of the singular directions are
consequences of the definition.

The main result, Theorem~\ref{thm:main}, proves that a conic certificate of
radius $\rho$ yields $V_S(t)=c_1t+c_2t^2,$ $0<t<\rho,$
with both coefficients explicit. The linear coefficient is a boundary
length counted through proximal normals. The quadratic coefficient combines
the turning of the regular part, the normal fans at the singular points and
a correction $-\cot(\theta/2)$ for each link gap of width $\theta<\pi$. For
a reentrant corner, $\theta$ is its exterior opening. The same correction
appears in the Gauss--Bonnet formula of Theorem~\ref{thm:planarfree}, proved
independently for piecewise-$C^2$ domains with straight reentrant corners.
Remark~\ref{rem:gauss} compares the two formulas on the L-shaped polygon.

The proof uses a decomposition of the tube by nearest points. A spatial
cutoff is unsuitable: even for the half-line
$\mathcal R=\R_{\ge0}e_1$, intersecting its tube with a fixed ball creates a
cubic term. By contrast, the tube points whose feet lie within distance
$r_0$ of the endpoint have area exactly
$2r_0t+\frac{\pi}{2}t^2.$ 
The relevant object is therefore the length of each proximal normal fiber,
rather than global uniqueness of the metric projection. At a reentrant
corner the medial axis reaches the boundary and the fiber lengths tend to
zero, but exact conicity makes them proportional to the distance from the
vertex. This homogeneity is what preserves quadraticity.

A one-dimensional converse makes the mechanism precise. For a monotone
family of wall fibers with cut function $\delta$, an exactly quadratic
contribution
\[
\int_0^{r_0}\min\{t,\delta(a)\},da
\]
forces $\delta(a)=ca$ near the singular point
(Proposition~\ref{prop:converse}). Three examples show how polynomiality can
fail. A cuspidal notch produces a term of order $t^{3/2}$
(Proposition~\ref{prop:cusp}); a transversal but curved reentrant corner,
formed by two overlapping discs, gives a real-analytic non-polynomial volume
(Proposition~\ref{prop:twodiscs}); and a fan over the ternary Cantor set has
volume of order $t^{1-D}$, where $D=\log 2/\log 3$
(Proposition~\ref{prop:cantor}). These examples isolate tangential contact,
loss of one-homogeneity and degeneration of the link gaps as three distinct
obstructions.

\subsection{Relation to previous work}\label{sec:related}

The classical theory begins with Steiner's formula and Federer's extension
to sets of positive reach \cite{federer}; see \cite{RZbook} for a modern
account. Hug, Last and Weil \cite{HLW} proved a local Steiner formula for
arbitrary closed sets in terms of the proximal normal bundle. We use the
formulation of Khmaladze and Weil \cite{KW}, in which the contribution of a
normal pair $(z,u)$ is truncated at its fiber length $\delta_S(z,u)$. Conic
reach is designed to control these truncations even when the ordinary reach
vanishes.

General regularity of $t\mapsto V_S(t)$ is much weaker than exact
polynomiality. Differentiability of the parallel-volume function and its
relation with the surface area of parallel sets go back to Stach'o
\cite{Stacho}; see also Fu \cite{Fu85} and Rataj and Winter \cite{RW}. At the
opposite extreme, Heveling, Hug and Last \cite{HHL} proved that a planar
compact set whose parallel volume is polynomial on all of $(0,\infty)$ must
be convex. We only require polynomiality near the origin, which leaves room
for reentrant singularities. Related global characterizations for random
sets and lower-dimensional gauge bodies appear in \cite{HLWpoly}.

Polynomial parallel volume was used as a primitive assumption in the
statistical program of \cite{CCM}, where its coefficients and the maximal
interval of validity are estimated from random samples. The present results
provide a geometric sufficient condition for that assumption beyond
positive reach. The first coefficient,
$\lim_{t\downarrow0} {V_S(t)}/{t},$ 
is the unnormalized outer Minkowski content; its identification with
perimeter under suitable regularity assumptions is studied in \cite{ACV}.
In Theorem~\ref{thm:main} it is represented by
\[
\Theta_1(\eta)+2\sum_{\sigma\in\Sigma}m_\sigma r_\sigma,
\]
a boundary length recovered from the proximal-normal decomposition.

Conic reach should also be distinguished from the existence of curvature
measures or normal cycles. Compact domains with d.c. boundary \cite{PR} and
WDC sets \cite{PRZ} form broad classes of singular sets carrying such
structures; suitable finite unions are considered in \cite{PU,RZbook}.
These properties do not by themselves imply exact polynomiality of the
ordinary parallel volume. Non-polynomial tube behaviour already occurs for
locally finite unions of positive-reach sets under natural tangency
conditions \cite{CC}; Proposition~\ref{prop:twodiscs}, concerning the union
of two convex discs, gives an elementary example. The difference is that
conic reach controls the cutoff $\min{t,\delta_S(z,u)}$ appearing in the
local Steiner formula.

The Cantor-fan example is related to the theory of fractal tube formulas and
complex dimensions \cite{LvF}; the neighbourhood estimates used there are
classical \cite{Falconer}. Curvature measures for piecewise-flat spaces go
back to \cite{CMS}. Unlike positive-reach theories applied to the set or to
its complement, conic reach makes no positive-reach assumption on either
side of the boundary.

The rest of the paper is organized as follows.
Section~\ref{sec:motivation} gives the statistical motivation, and
Section~\ref{sec:back} introduces the proximal-normal and local-Steiner
machinery. Conic reach is defined in Section~\ref{sec:def}. The geometry of
the planar links and the localized cone tubes is developed in
Section~\ref{sec:link}, leading to the main theorem in
Section~\ref{sec:main}. Sections~\ref{sec:converse} and~\ref{sec:planar}
contain the one-dimensional converse and the explicit piecewise-$C^2$
formula, while Section~\ref{sec:sharp} presents the three failure
mechanisms. Technical lemmas and proofs are collected in
Appendices~\ref{app:tech} and~\ref{app:proofs}.

\section{Motivation: set estimation and the volume function}
\label{sec:motivation}

A recurring problem in nonparametric statistics is to reconstruct a compact
set $S\subset\R^2$, or geometric functionals of it, from a random sample.
The subject goes back to \cite{DW80}; general accounts of support estimation
are given in \cite{CF97,CFsurvey}. Such reconstruction requires geometric
assumptions that are weak enough to cover realistic sets and strong enough
to yield rates. Common examples include convexity and its relaxations,
rolling-ball and reach-type conditions \cite{CFP12}, and cone conditions
\cite{CCF14}. Federer's reach has become a standard regularity parameter in
this setting and can itself be estimated consistently \cite{CFM23}.

The parallel-volume function contains much of the relevant geometric
information. Its behaviour near $t=0$ governs the estimation of boundary
length and Minkowski content \cite{CFRC07,PLRC08,CFG13}. When $V_S$ is
polynomial on an interval $[0,R]$, its coefficients and the endpoint $R$
become natural statistical targets. This is the program of \cite{CCM}, where
the polynomial hypothesis is assumed and both the coefficients and the
polynomial reach are estimated from observations in $S$.

Positive reach is the classical sufficient condition for this hypothesis,
but it excludes sets that arise naturally in support estimation, such as
nonconvex polygons, crossings and unions of segments. Departures from
convexity are themselves objects of statistical inference \cite{CFMP24}.
The purpose of conic reach is to provide a checkable geometric condition that
covers such singularities while retaining an exact polynomial tube formula.

The coefficients also have a direct interpretation. The linear term is a
boundary length, while the quadratic term records the turning of the regular
boundary and the gap structure of the singular links. This suggests using
the volume function not only to estimate size and perimeter, but also to
extract information about the topology and singular geometry of a planar
set.

\section{Notation and background}\label{sec:back}

Throughout the paper, the ambient set $S\subset\R^2$ is assumed to be
nonempty and compact, unless explicitly stated otherwise. We
write $\lambda_2$ for two-dimensional and $\lambda_1$ for one-dimensional
Lebesgue measure.

Balls are $B(x,r)$ (open) and $\bar B(x,r)$; $\Sph^{1}$ is the unit sphere;
$\vol=\lambda_2$ is Lebesgue measure and $\mathcal H^k$ the Hausdorff measure. For
$S\subset\R^2$ closed, $y\in\R^2$:
\[
\feet_S(y):=\{x\in S: |y-x|=\dist(y,S)\}
\]
is the set of feet (nearest points) of $y$. Let
$\Unp(S):=\{y: \#\feet_S(y)=1\}$ and write $\xi_S(y)$ for the unique foot when it
exists. Federer's local reach is
\[
\reach(S,x):=\sup\{r\ge0: B(x,r)\subset\Unp(S)\},\quad
\reach(S)=\inf_{x\in S}\reach(S,x).
\]
For $x\in S$ and a unit vector $u$, the reach function (fiber length) is
\[
\delta_S(x,u):=\sup\bigl\{a>0: \xi_S(x+su)=x \text{for all }0<s<a\bigr\}
\in[0,\infty],
\]
with the convention $\sup\emptyset:=0$.
For $x$ in the interior of $S$, $\delta_S(x,\cdot)\equiv0$: for $0<\varepsilon<s$
small, the point $x+\varepsilon u\in S$ is strictly closer to $x+su$ than $x$
is, so $x$ is never a foot --- the normal bundle below therefore only sees
$\partial S$. The set of admissible $s$ is automatically an interval: if $\xi_S(x+su)=x$ then, for
$0<s'<s$, any $z\in S$ with $|x+s'u-z|\le s'$ would give
$|x+su-z|\le(s-s')+s'=s$ with equality only for $z=x$, so $\xi_S(x+s'u)=x$.
Consequently $\xi_S(x+su)=x$ forces $s\le\delta_S(x,u)$ --- the form of the
definition we will use in Theorem~\ref{thm:link}. We call $u$ a proximal
normal at $x$ if $\delta_S(x,u)>0$, write $\Nor(S,x)$ for the set of proximal unit
normals, and $N(S):=\{(x,u): x\in\partial S, \delta_S(x,u)>0\}$ 
for the (proximal) unit normal bundle; this matches the notation of \cite{HLW}.

For $u,\omega\in\Sph^1$ we write
$d_{\Sph^1}(u,\omega):=\arccos\langle u,\omega\rangle$ for the angular
distance, and $d_{\Sph^1}(u,K):=\inf_{\omega\in K}d_{\Sph^1}(u,\omega)$
for $K\subseteq\Sph^1$, with the convention
$d_{\Sph^1}(u,\emptyset):=+\infty$.

For $t>0$ let $W_t:=\{y: 0<\dist(y,S)\le t\},$ so that $V_S(t)=\vol(W_t);$
and for Borel sets $\Gamma\subseteq\partial S$ and $\eta\subseteq N(S)$, the
feet-localized tubes are
\[
\begin{aligned}
W_t(\Gamma)&:=\bigl\{y\in W_t\cap\Unp(S): \xi_S(y)\in\Gamma\bigr\},\\
W_t(\eta)&:=\Bigl\{y\in W_t\cap\Unp(S):\
\bigl(\xi_S(y),\tfrac{y-\xi_S(y)}{\dist(y,S)}\bigr)\in\eta\Bigr\}.
\end{aligned}
\]
Points with more than one foot are Lebesgue-null (Lemma~\ref{lem:federer}(i)
below), so the restriction to $\Unp(S)$ costs no volume.

\begin{definition}\label{def:polreach}
For $S\subset\R^2$ compact, let $\R_2[t]$ be the polynomials of degree at
most $2$ and define, following \cite{CCM},
\[
\polreach(S):=\sup\bigl\{R\ge 0: \exists p\in\R_2[t] \text{ with } V_S(t)=p(t)
 \forall t\in[0,R]\bigr\}.
\]
Since $V_S$ is continuous with $V_S(0)=0$, the closed interval and $p(0)=0$
cost nothing. (In the plane, the degree bound $\le2$ agrees with the one used in
\cite{CCM}.)
\end{definition}

The fiber length admits a useful characterization --- $\delta_S(x,u)$ is the
largest $s$ with $\dist(x+su,S)=s$ --- which also shows that $\delta_S$ is a
Borel function, upper semicontinuous on $S\times\Sph^1$; and by Federer's
theorem the points of $\R^2$ with more than one nearest point in $S$ form a
Lebesgue-null set, while the local reach bounds all proximal fiber lengths
from below. These three background facts are stated precisely as
Lemmas~\ref{lem:deltameas} and~\ref{lem:federer} in Appendix~\ref{app:tech} --- the reach function and the normal bundle
of a general closed set are part of the framework of \cite{HLW,KW}, and
the short proofs are included to keep the paper self-contained;
the first is elementary (cf. the reach function of \cite[Sect.~2]{KW}), and
the second is part of \cite[Thm.~4.8]{federer} together with the a.e.\
differentiability of semiconcave functions.

The regular zone is handled by the local Steiner formula for arbitrary
closed sets, which in the plane is particularly simple: two measures, no
normalization constants. Call a Borel set $\eta\subseteq N(S)$ $r$-bounded
if its feet lie in a bounded set and $\delta_S\ge\rho$ on $\eta$ for some
$\rho>0$.

\begin{theorem}\label{thm:HLW}
For every nonempty compact $S\subset\R^2$ there exist signed measures
$\Theta_0,\Theta_1$, defined and of finite total variation on the
$r$-bounded Borel subsets of $N(S)$, such that for every Borel
$\eta\subseteq N(S)$ whose feet lie in a compact set and every $t>0$,
\begin{equation}\label{eq:HLW}
\vol\bigl(W_t(\eta)\bigr)
=\int_\eta \min\{t,\delta_S\} d\Theta_1
+\tfrac12\int_\eta \min\{t,\delta_S\}^2 d\Theta_0 ,
\end{equation}
where for general $\eta$ the right-hand side is a finite weighted integral
against the variations of the $\Theta_j$.
\end{theorem}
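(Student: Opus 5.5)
The plan is to realize the tube through its normal parametrization and compute the Jacobian, following Hug--Last--Weil \cite{HLW} specialized to the plane. Consider the map
\[
F:\{(x,u,s): (x,u)\in N(S),\ 0<s<\delta_S(x,u)\}\to\R^2,\qquad F(x,u,s)=x+su .
\]
The first step is to show that $F$ is a bijection onto $\bigcup_{t>0}W_t\cap\Unp(S)$, up to a Lebesgue-null set. Injectivity follows because, for $s<\delta_S(x,u)$, we have $\xi_S(x+su)=x$ and $u=(y-x)/\dist(y,S)$. Surjectivity onto $\Unp(S)\setminus S$ follows because any $y$ with unique foot $x$ satisfies $\xi_S(x+s'u)=x$ for all $s'\le \dist(y,S)$, by the interval argument in Section~\ref{sec:back}. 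The boundary case $s=\delta_S$ is null. Hence $W_t(\eta)$ is, up to a null set, $F(\{(x,u)\in\eta,\ 0<s<\min\{t,\delta_S(x,u)\}\})$. By Lemma~\ref{lem:federer}(i), the non-uniqueness set can be discarded.

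The second step, which is the real content, is the coarea/area formula on the normal bundle. As in \cite{HLW}, I would use the approximation $S^{(\varepsilon)}=S_\varepsilon$. For $\varepsilon>0$ the parallel set has reach at least $\varepsilon$ from outside, and its boundary $\partial S_\varepsilon$ is a Lipschitz curve. More precisely, the complement of $\mathrm{int}\,S_\varepsilon$ has positive reach, and Federer's theory gives the exact formula
\[
\vol(\text{region swept}) = \int (s\text{-length})\,d\mathcal H^1 + \tfrac12\int(s\text{-length})^2\,d\kappa ,
\]
where $\kappa$ is the curvature measure. The cleaner route is to parametrize directly. The normal bundle $N(S)$ is a countably $1$-rectifiable subset of $\R^2\times\Sph^1$, being the image of a Lipschitz graph as in \cite{HLW}. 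On it, the map $(x,u,s)\mapsto x+su$ has Jacobian $J(x,u,s)=a(x,u)+s\,b(x,u)$, which is affine in $s$, where $a,b$ come from the approximate tangent of $N(S)$. The area formula then gives
\[
\vol(W_t(\eta))=\int_\eta\int_0^{\min\{t,\delta_S\}}(a+sb)\,ds\,d\mathcal H^1 .
\]
I would then set $\Theta_1=a\,\mathcal H^1\llcorner N(S)$ and $\Theta_0=b\,\mathcal H^1\llcorner N(S)$, which yields \eqref{eq:HLW} after integrating in $s$.

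The third step is finiteness of the total variation on $r$-bounded sets. Positivity of $J$ on $(0,\delta_S)$ (the map is injective and orientation is consistent) together with $\mathcal H^1$-finiteness of $\{(x,u)\in N(S):\delta_S\ge\rho,\ x\in K\}$ gives $|a|,|b|$ integrable there. For the finiteness of $\mathcal H^1$, note that this set is bi-Lipschitz to the outer parallel curve $\{x+\rho u\}$, whose points all lie in $\partial S_\rho$, a set of finite length. Then $a+sb\ge0$ for all $s<\rho$ yields $|b|\le C/\rho\,(|a|+|a+\rho b|)$.

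The main obstacle is the rectifiability of $N(S)$ and the affine form of the Jacobian for a general compact $S$. I would obtain it by writing $N(S)$ as an increasing union over $\rho$ of the sets $\{\delta_S\ge\rho\}$. Each of these is the Lipschitz image, under $y\mapsto(\xi_S(y),(y-\xi_S(y))/\rho)$, of a subset of $\partial S_\rho$; this map is Lipschitz on $\{y:\dist=\rho,\ \delta\ge\rho\}$ with constant depending on $\rho$. For general $\eta$, the formula then follows by monotone convergence over these pieces, interpreted against the variations.
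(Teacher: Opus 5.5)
\paragraph{Comparison with the paper's route.} The paper does not reprove this theorem. Remark~\ref{rem:HLWattr} quotes it as the planar case of \cite[Thm.~1]{KW}, applied to the indicator of $W_t(\eta)$, which is measurable because $\xi_S$ is Borel on $\Unp(S)$. Lemma~\ref{lem:deltameas} then identifies the corrected reach function of \cite{KW} with $\delta_S$. Rederiving the formula from the area formula on the normal bundle, as you propose, is a legitimate alternative and would make the paper self-contained in the plane. As written, however, the step that actually produces the polynomial is missing.

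\paragraph{The missing sign condition.} The area formula only yields
\[
\vol\bigl(W_t(\eta)\bigr)=\int_\eta\int_0^{m}|a+sb|\,ds\,d\mathcal H^1,
\qquad m:=\min\{t,\delta_S\}.
\]
This equals $\int_\eta\bigl(am+\tfrac12bm^2\bigr)\,d\mathcal H^1$ only if, for a measurable choice of orientation, $a+sb\ge0$ on the whole fiber $(0,\delta_S(x,u))$. ``The map is injective and orientation is consistent'' does not prove this. The sign condition is exactly the statement that the fiber length bounds the curvature from below, and it has to be derived.

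Here is the missing input. Suppose $\delta_S(x_i,u_i)\ge\rho$ for $i=1,2$. Then $\dist(x_i+\rho u_i,S)=\rho$ gives $|x_1-x_2|^2+2\rho\langle u_1,x_1-x_2\rangle\ge0$. Symmetrizing gives $\langle u_1-u_2,x_1-x_2\rangle\ge-|x_1-x_2|^2/\rho$.
\begin{itemize}
\item At $\mathcal H^1$-a.e.\ point of $\{\delta_S\ge\rho\}$, the first inequality forces the approximate unit tangent to have the form $(\alpha u^\perp,\beta u^\perp)$. Hence $|a+sb|=|\alpha+s\beta|$.
\item The second inequality gives $\alpha\beta\ge-\alpha^2/\rho$.
\item Orient so that $\alpha\ge0$, and $\beta>0$ when $\alpha=0$. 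Then $\alpha+s\beta\ge\alpha(1-s/\rho)\ge0$ for $0\le s<\rho$.
\item Letting $\rho$ run through the rationals below $\delta_S(x,u)$ gives the sign on all of $(0,\delta_S)$.
\end{itemize}
This also settles the finiteness claim for general $\eta$, which your monotone-convergence step needs. From $\alpha\ge0$ and $\alpha+m\beta\ge0$ one gets $m|\alpha|+\tfrac12m^2|\beta|\le3\bigl(m\alpha+\tfrac12m^2\beta\bigr)$, so the variations are dominated by the tube area.

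\paragraph{The parametrization at level $\rho$.} Your parametrization of $\{\delta_S\ge\rho\}$ from the level set $\{\dist(\cdot,S)=\rho\}$ is false as stated, because at that level there is no margin.
\begin{itemize}
\item If $S$ is a circle of radius $\rho$, every inward pair has $\delta_S=\rho$ and is sent to the centre. The centre has infinitely many feet and lies in the interior of $S_\rho$, so these pairs are not in the image at all.
\item Take an arc of the parabola $y=x^2/(2\rho)$, whose vertex has focal distance exactly $\rho$. The level-$\rho$ points over the vertex and over $(\epsilon,\epsilon^2/(2\rho))$ are $O(\epsilon^3)$ apart, while the feet are about $\epsilon$ apart. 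So the foot map is not Lipschitz.
\end{itemize}
The same objection applies to your ``bi-Lipschitz to the outer parallel curve'' argument for finiteness.

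Work instead at a level $r<\rho$, e.g.\ $r=\rho/2$. For $y_i=x_i+ru_i$ the inequality above gives $\langle y_1-y_2,x_1-x_2\rangle\ge(1-r/\rho)|x_1-x_2|^2$. So $y\mapsto\bigl(\xi_S(y),(y-\xi_S(y))/r\bigr)$ is Lipschitz on these points, with constant controlled by $\rho/(\rho-r)$ and $1/r$, and these points lie on $\partial S_r$. Finiteness and rectifiability of $\partial S_r$ then give what you need; this follows, e.g., from the positive reach of $\overline{\R^2\setminus S_r}$. Integrability of $a,b$ is automatic, since $|a|,|b|\le1$ for a unit tangent.

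\paragraph{Two smaller points.}
\begin{itemize}
\item $\partial S_\varepsilon$ need not be a Lipschitz curve: two points at distance $2\varepsilon$ give a figure eight.
\item The nullity of the endpoint set $\{s=\delta_S\}$ is only asserted. It does follow by Fubini from the area formula once the rectifiable structure is in place.
\end{itemize}
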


\begin{remark}\label{rem:HLWattr}
Theorem~\ref{thm:HLW} is the planar case of \cite[Thm.~1]{KW}, applied to
the indicator of the localized tube --- measurable, since $\xi_S$ is Borel
on $\Unp(S)$; the finiteness is the integrability statement of that
theorem, and the uniqueness is also there. The formula goes back to
\cite[Thm.~2.1]{HLW}, stated with a different reach function; \cite{KW}
restate it with the corrected reach function
$r(x,u)=\sup\{s>0:\xi_S(x+su)=x\}$, which coincides with $\delta_S$ by
Lemma~\ref{lem:deltameas}. See also \cite{Santilli}.
\end{remark}

For sets with smooth boundary, $\Theta_1$ is the length measure of
$\partial S$ carried to $N(S)$ by the outer normal, and $\Theta_0$ is the
turning (curvature) measure; on the round disc of radius $r$,
$\Theta_1(N(S))=2\pi r$ and $\Theta_0(N(S))=2\pi$, and \eqref{eq:HLW}
returns $2\pi rt+\pi t^2$. Lower-dimensional pieces of $S$ are counted once
per proximal normal --- a segment of length $\ell$ has
$\Theta_1$-mass $2\ell$.

\section{Planar conic reach}\label{sec:def}

\begin{definition}\label{def:conreach}
Let $S\subset\R^2$ be compact and let $\rho>0$. We say that $S$ admits a
conic certificate of radius $\rho$ if there exist a finite set
$\Sigma\subset\partial S$, radii $r_\sigma\ge\rho$ ($\sigma\in\Sigma$) whose
enlarged cores are pairwise far apart,
\[
|\sigma-\sigma'| > 3 (r_\sigma+r_{\sigma'})
\qquad\text{for distinct }\sigma,\sigma'\in\Sigma,
\]
and cones $C_\sigma\subseteq\R^2$ (nonempty closed sets with
$\lambda C_\sigma=C_\sigma$ for all $\lambda>0$, not necessarily convex)
such that:
\begin{itemize}
\item[(C1)] the set is exactly conical on the enlarged core of each
singular point:
\[
S\cap\bar B(\sigma,3r_\sigma) = (\sigma+C_\sigma)\cap\bar B(\sigma,3r_\sigma),
\qquad \sigma\in\Sigma;
\]
\item[(C2)] outside the cores, every proximal normal fiber has length at
least $\rho$:
\[
\delta_S(z,u) \ge \rho
\qquad\text{for every }(z,u)\in N(S)\text{ with }
z\notin\bigcup_{\sigma\in\Sigma}B(\sigma,r_\sigma).
\]
\end{itemize}
The set $K_\sigma:=C_\sigma\cap\Sph^1$ is the link of $S$ at $\sigma$. The
conic reach of $S$ is the supremum of the certifiable radii,
\[
\conreach(S):=\sup\bigl\{\rho>0: S\text{ admits a conic certificate of
radius }\rho\bigr\}
\]
(with $\sup\emptyset:=0$).
\end{definition}

\begin{remark}\label{rem:roles}
The parameter $\rho$ is the quantity being maximized: the uniform fiber
length outside the singular cores, and the interval of polynomiality in
Theorem~\ref{thm:main}. The core radius $r_\sigma$ may vary from point to
point, and the two scales cannot be coupled: at a reentrant wedge of
exterior opening $\beta\in(0,\pi)$, the wall fiber at distance $a$ from
the vertex is cut by the exterior bisector at length $a\tan(\beta/2)$
(Figure~\ref{fig:linkfiber}), so the fiber condition forces
$r_\sigma\tan(\beta/2)\ge\rho$; coupling $r_\sigma=\rho$ would exclude
every corner with $\beta<\pi/2$. The factor $3$ in (C1) is a localization
margin: a point $y$ at distance less than $\rho$ from $S$ with a foot in
$B(\sigma,r_\sigma)$ satisfies $|y-\sigma|<r_\sigma+\rho$, and every
competing foot then lies within $r_\sigma+2\rho\le3r_\sigma$ of $\sigma$,
inside the region where $S$ and the cone agree (Lemma~\ref{lem:local},
Figure~\ref{fig:certificate}); the separation $3(r_\sigma+r_{\sigma'})$
keeps the conical zones disjoint. Condition (C2) concerns the proximal
normal bundle only. A uniform unique-projection requirement in its place
would fail: the exterior bisector of a wedge contains two-feet points
arbitrarily close to the vertex. Imposed fiberwise, (C2) instead controls
directly the cutoff $\min\{t,\delta_S\}$ of the localized Steiner formula:
outside the cores the cut locus does not truncate a fiber before length
$\rho$, while inside them conicity makes the cut lengths one-homogeneous
in the distance to the vertex, and homogeneous integrands integrate to
polynomials. When $\Sigma=\emptyset$ the condition is a global fiber
bound.
\end{remark}

\begin{figure}[t]
\centering
\begin{tikzpicture}[scale=0.7]
\fill[black!10] (-4,1.6) -- (0,1.6) -- (0,0) -- (4,0) -- (4,-1.6) -- (-4,-1.6) -- cycle;
\draw[thick] (-4,1.6) -- (0,1.6) (0,1.6) -- (0,0) -- (4,0);
\fill (0,0) circle (1.6pt);
\node[below left] at (0,0) {$\sigma$};
\draw[dashed] (0,0) circle (1);
\draw[dashed] (0,0) circle (3);
\node at (-0.55,1.22) {\small $r_\sigma$};
\node at (-2.35,2.35) {\small $3r_\sigma$};
\draw[dotted] (0,0) -- (3.4,3.4);
\fill (0.6,0) circle (1.3pt); \node[below] at (0.6,0) {\small $z$};
\draw (0.6,0) -- (0.6,0.52);
\draw (0.6,0.52) circle (1.3pt); \node[above right] at (0.6,0.5) {\small $y$};
\draw[densely dotted] (0.6,0.52) -- (0,0.62);
\fill (0,0.62) circle (1.3pt); \node[left] at (0,0.62) {\small $q$};
\fill (2.2,0) circle (1.3pt);
\draw[->,thick] (2.2,0) -- (2.2,1.5) node[right] {\small $\delta\ge\rho$};
\end{tikzpicture}
\caption{Localization near a singular core (schematic): the set (shaded)
coincides with the cone $\sigma+C_\sigma$ on $\bar B(\sigma,3r_\sigma)$.
If a foot $z$ lies in the core $B(\sigma,r_\sigma)$ and
$\dist(y,S)<\rho\le r_\sigma$, then $y$ and every competing foot $q$ lie
in $\bar B(\sigma,3r_\sigma)$, where $S$ and the cone agree
(Lemma~\ref{lem:local}); fibers over feet outside the cores have length
at least $\rho$.}
\label{fig:certificate}
\end{figure}

Nothing is assumed about the cones $C_\sigma$: Theorem~\ref{thm:link}
will show that (C1)--(C2) already force every link to be a finite union
of closed arcs with uniformly wide gaps.

\begin{figure}[t]
\centering
\begin{tikzpicture}[scale=0.72]
\draw (0,0) circle (1.6);
\draw[line width=2.2pt] (35:1.6) arc (35:130:1.6);
\draw[line width=2.2pt] (200:1.6) arc (200:325:1.6);
\fill (-35:1.6) circle (1.7pt); \node[right] at (-35:1.85) {\small $\omega_1$};
\fill (35:1.6) circle (1.7pt); \node[right] at (35:1.85) {\small $\omega_2$};
\draw[<->] (-27:1.95) arc (-27:27:1.95);
\node at (0:2.3) {\small $\theta$};
\node at (100:2.0) {\small $K$};
\node at (0:1.15) {\small $G$};
\node at (0,-2.6) {\small the link and one gap};
\begin{scope}[xshift=7.2cm]
\fill[black!10] (35:2.6) arc (35:325:2.6) -- (0,0) -- cycle;
\draw[thick] (0,0) -- (-35:2.6);
\draw[thick] (0,0) -- (35:2.6);
\fill (0,0) circle (1.6pt);
\draw[dotted] (0,0) -- (2.75,0);
\node[above] at (1.65,0.06) {\small bisector};
\fill (-35:2.1) circle (1.5pt);
\draw[thin] (1.589,-1.113) -- (1.681,-0.981) -- (1.812,-1.073);
\node[below] at (-35:2.32) {\small $r\omega_1$};
\draw[->,thick] (-35:2.1) -- (2.564,0);
\node[right] at (2.30,-0.92) {\small $\delta=r\tan\frac\theta2$};
\node at (0,-2.6) {\small the fiber over a boundary ray};
\end{scope}
\end{tikzpicture}
\caption{Left: a link $K\subset\Sph^1$ (thick arcs) with a gap $G$ of
width $\theta$ between the endpoints $\omega_1,\omega_2$. Right: in the
cone over $K$ (shaded), the fiber over $r\omega_1$ in the direction of the
gap is cut by the gap's bisector (dotted) at length $r\tan(\theta/2)$; for
$\theta\ge\pi$ the fiber is infinite (Lemma~\ref{lem:sidefiber}).}
\label{fig:linkfiber}
\end{figure}

\begin{remark}\label{rem:reachincl}
With $\Sigma=\emptyset$, a certificate of radius $\rho$ is the requirement
$\delta_S\ge\rho$ on all of $N(S)$; since $\reach(S,z)\ge\rho$ gives
$\delta_S(z,u)\ge\rho$ for all $(z,u)\in N(S)$
(Lemma~\ref{lem:federer}(ii)), every $\rho<\reach(S)$ is certifiable and
$\conreach(S)\ge\reach(S)$. The inclusion of classes is strict: the
L-shape has $\reach=0$ and $\conreach=\tfrac13$ (Example~\ref{ex:Lshape}).
Moreover, for $a>0$, $Q$ orthogonal and $b\in\R^2$,
$\conreach(aQS+b)=a \conreach(S)$: a certificate
$(\rho,\Sigma,(r_\sigma),(C_\sigma))$ for $S$ becomes the certificate
$(a\rho, aQ\Sigma+b, (ar_\sigma), (QC_\sigma))$ for $aQS+b$, since
$\delta_{aQS+b}(aQz+b,Qu)=a \delta_S(z,u)$. The same invariance holds for
$\reach$ and, since $V_{aQS+b}(t)=a^2 V_S(t/a)$, for $\polreach$.
\end{remark}

\begin{remark}\label{rem:convexcorners}
Convex corners need not be included in $\Sigma$: where two $C^2$ boundary
pieces meet transversally with interior angle $<\pi$, the exterior normal
directions spread at the corner, fibers over nearby boundary points are
cut by curvature or by far-away features rather than by the corner itself,
and (C2) holds there with no conical hypothesis --- indeed, when the
pieces are curved, (C1) forbids putting the point in $\Sigma$. (General
prox-regularity statements for transversal intersections of positive-reach
sets exist, see \cite[Ch.~5]{RZbook}, but we make no use of them: (C2) is
verified directly in every example, and the planar case is settled in
Section~\ref{sec:planar}.) Exact conicity is thus required only where the
cut locus accumulates on $\partial S$ (reentrant corners, crossings, cone
points).
\end{remark}

\begin{example}\label{ex:Lshape}
The L-shape $L=([0,2]\times[0,1])\cup([0,1]\times[0,2])$ has
\[
\conreach(L)=\tfrac13,
\qquad\text{and, by Remark~\ref{rem:reachincl},}\qquad
\conreach(aL)=\tfrac a3  (a>0),
\]
and the supremum is attained. Lower bound: take $\Sigma=\{q\}$ with
$q=(1,1)$ the reentrant corner, $C_q=\{(x,y):x\le0\text{ or }y\le0\}$ the
solid three-quarter-plane wedge (one link component, one gap of width
$\tfrac\pi2$), and $r_q=\rho=\tfrac13$. Conicity holds on
$\bar B(q,3r_q)=\bar B(q,1)$: a point of that ball lies in $L$ exactly
when one of its coordinates is $\le1$, i.e. exactly when it lies in
$q+C_q$, and radius $1$ is precisely where this stops --- beyond it,
$q+C_q$ contains points below the floor $y=0$, and the ends of the arms
come into view. As for the fibers outside $B(q,\tfrac13)$: over an
interior wall point $z$ at distance $a=|z-q|\ge\tfrac13$ from the corner,
the fiber is cut by the exterior bisector at
$\delta=a\tan\tfrac\pi4=|z-q|\ge\tfrac13$; over the outer sides the
relevant normals are support normals with infinite fibers; at the three
outer corners $(0,0)$, $(2,0)$, $(0,2)$, every direction $u$ of the normal
fan is a global support normal ($\langle\cdot,u\rangle$ is maximized over
$L$ at the corner), so those fibers are infinite as well; and at $(2,1)$
and $(1,2)$, the only competing boundary is the other arm, which stays at
distance $\ge1$ from every point of the fan fibers, so these are not cut
before length $1$. Hence the certificate is admissible with
$\rho=\tfrac13$. Upper bound: let a certificate of radius $\rho>0$ be given. First,
$q\in\bar B(\sigma,r_\sigma)$ for some $\sigma\in\Sigma$: otherwise,
$\Sigma$ being finite, some $\varepsilon>0$ would give
$B(q,\varepsilon)\cap\bigcup_\sigma B(\sigma,r_\sigma)=\emptyset$, and
interior wall points $z\to q$ would lie outside all cores while carrying
fibers of length exactly $|z-q|\to0$, contradicting (C2). Next,
$\sigma=q$: the point $q$ lies in the open ball $B(\sigma,3r_\sigma)$,
where $\partial S$ and $\partial(\sigma+C_\sigma)$ coincide, and
$\partial(\sigma+C_\sigma)=\sigma+\partial C_\sigma$ is invariant under
dilations centred at $\sigma$, hence contains the full ray from $\sigma$
through each of its points. Near $q$, $\partial L$ contains two
non-collinear segments through $q$; if $\sigma$ were off the line of one
of them, the rays from $\sigma$ through that segment would sweep a set
with nonempty interior inside $\partial(\sigma+C_\sigma)$ --- impossible,
since $C_\sigma$ is closed, and the boundary of a closed set has empty
interior. So $\sigma$ lies on
both lines, whose intersection is $\{q\}$. Finally, a cone is determined
by its germ at the apex
($C=\bigcup_{\lambda>0}\lambda (C\cap B_\varepsilon)$), so the cone of
the certificate is the wedge $C_q$, and (C1) on $\bar B(q,3r_q)$ forces
$3r_q\le1$: beyond radius $1$ the wedge contains points below the floor
$y=0$ that are not in $L$. As $\rho\le r_q$ by definition,
$\rho\le\tfrac13$.
\end{example}

\section{Planar links and the localized cone tube}\label{sec:link}

\subsection{Structure of the links}

The next theorem is the reason planar conic reach needs no regularity
clause: the fiber condition (C2), transported to the cone by
Corollary~\ref{cor:trunc}, computes the boundary fibers of every link
exactly and forces all gaps to be wide. Throughout, for a cone
$C\subseteq\R^2$, a unit $\omega\in C$ and a unit $u$, we write
$\tilde\delta(\omega,u):=\delta_C(\omega,u)\in[0,\infty]$; by homogeneity,
$\delta_C(r\omega,u)=r \tilde\delta(\omega,u)$ and
$\Nor(C,r\omega)=\Nor(C,\omega)$ for $r>0$, and $\tilde\delta(\omega,u)>0$
exactly when $u$ is proximal at $\omega$.

\begin{theorem}\label{thm:link}
Let $S$ admit a conic certificate of radius $\rho$ and let
$\sigma\in\Sigma$. Then $K_\sigma\ne\Sph^1$, every gap of $K_\sigma$ has
angular width at least $\theta_\sigma:=2\arctan\frac{\rho}{r_\sigma},$ 
and consequently $K_\sigma$ is the union of at most
$\lfloor 2\pi/\theta_\sigma\rfloor$ pairwise disjoint closed arcs, some
possibly degenerate to points. (The case $K_\sigma=\emptyset$, i.e.
$S\cap\bar B(\sigma,3r_\sigma)=\{\sigma\}$, is allowed.)
\end{theorem}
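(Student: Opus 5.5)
The plan is to transport (C2) to the cone and then read off, at the endpoint of each gap, the fiber length cut by the gap's bisector. Everything reduces to one local computation on the circle.

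\emph{Step 1: transport (C2) to the cone.} By Lemma~\ref{lem:local} and Corollary~\ref{cor:trunc}, for a point $z=\sigma+r\omega$ with $\omega\in K_\sigma$ and $r_\sigma\le r<2r_\sigma$, the fibers of $S$ and of $\sigma+C_\sigma$ agree up to length $\rho$:
\[
\min\{\delta_S(z,u),\rho\}=\min\{r\tilde\delta(\omega,u),\rho\}.
\]
Such a $z$ lies outside every core $B(\sigma',r_{\sigma'})$. For $\sigma'=\sigma$ this holds because $r\ge r_\sigma$; for $\sigma'\ne\sigma$ it follows from the separation condition. Hence (C2) gives $r\tilde\delta(\omega,u)\ge\rho$ for every proximal normal $u$ at $\omega$. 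Taking $r=r_\sigma$ (or letting $r\downarrow r_\sigma$), we get
\[
\tilde\delta(\omega,u)\ge\frac{\rho}{r_\sigma}\qquad\text{whenever }\tilde\delta(\omega,u)>0.
\]

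\emph{Step 2: $K_\sigma\neq\Sph^1$.} If $K_\sigma=\Sph^1$, then $C_\sigma=\R^2$. In that case $\sigma$ is an interior point of $S$, contradicting $\sigma\in\partial S$.

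\emph{Step 3: each gap has width at least $\theta_\sigma$.} Take a gap $G$, that is, a component of $\Sph^1\setminus K_\sigma$. Since $K_\sigma$ is closed, $G$ is an open arc.
\begin{itemize}
\item If $K_\sigma=\emptyset$ there is nothing to prove.
\item If $K_\sigma\neq\emptyset$, let $\omega_1$ be an endpoint of $G$ (so $\omega_1\in K_\sigma$), and let $\omega_2$ be the other endpoint. Possibly $\omega_2=\omega_1$, which happens when $K_\sigma$ is a single point and $G$ has width $2\pi$. Let $\theta$ be the width of $G$; if $\theta\ge\pi$, then $\theta\ge\theta_\sigma$ already.
\item So assume $\theta<\pi$. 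Let $u$ be the unit normal to the ray $\R_{\ge0}\omega_1$ that points into the sector over $G$. Elementary geometry gives the fiber of $C_\sigma$ at $\omega_1$ in direction $u$ (Lemma~\ref{lem:sidefiber}, Figure~\ref{fig:linkfiber}). Points $\omega_1+su$ lie in the open sector over $G$, whose closure meets $C_\sigma$ only along the two boundary rays through $\omega_1$ and $\omega_2$. The nearest point on the ray through $\omega_1$ is $\omega_1$ itself, at distance $s$. The distance to the ray through $\omega_2$ exceeds $s$ exactly while $\omega_1+su$ lies on the $\omega_1$-side of the bisector of $G$.
\item Points of $C_\sigma$ outside the closed sector are no closer than the nearer bounding ray, since $\theta<\pi$ and the sector is convex. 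Hence $\tilde\delta(\omega_1,u)=\tan(\theta/2)>0$.
\item Step 1 then gives $\tan(\theta/2)\ge\rho/r_\sigma$, that is, $\theta\ge 2\arctan(\rho/r_\sigma)=\theta_\sigma$.
\end{itemize}

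\emph{Step 4: counting arcs.} Every gap has width at least $\theta_\sigma>0$, and the gaps are disjoint open arcs of a circle of length $2\pi$. So there are at most $\lfloor2\pi/\theta_\sigma\rfloor$ of them. Between consecutive gaps, $K_\sigma$ consists of closed arcs, possibly single points. The number of arcs equals the number of gaps, so $K_\sigma$ is a union of at most $\lfloor 2\pi/\theta_\sigma\rfloor$ disjoint closed arcs.

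The main obstacle is Step 1. Corollary~\ref{cor:trunc} must apply uniformly up to radius about $2r_\sigma$, and one must check that no other core or other part of $S$ interferes. This is exactly what the factor $3$ and the separation condition are designed to ensure. A further subtlety in Step 3 is showing that far parts of $C_\sigma$ cannot cut the fiber earlier; convexity of the gap sector for $\theta<\pi$ handles this.
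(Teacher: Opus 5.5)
Your proof is correct and follows the paper's argument essentially step for step. You evaluate (C2) at $z=\sigma+r_\sigma\omega_1$, which lies outside all cores, transfer the fiber length to the cone via Corollary~\ref{cor:trunc}, read off $r_\sigma\tan(\theta/2)$ from Lemma~\ref{lem:sidefiber}, and then count the gaps. One small correction: Step 1 claims the range $r_\sigma\le r<2r_\sigma$, but Corollary~\ref{cor:trunc} only covers $|z-\sigma|\le 2r_\sigma-\rho$. Since you only use $r=r_\sigma$, this does not affect the argument.
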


The proof, in Appendix~\ref{app:pflink}, transports the fiber condition
(C2) to the cone by exact localization and computes the boundary fibers of
the link explicitly.

\subsection{The tube of a planar cone}\label{sec:cone}

Once the links are finite unions of arcs, the feet-localized tube of a
cone is a direct computation, carried out in the isometric coordinates
$(r,s)\mapsto r\omega+su$ attached to each incidence of a gap and one of
its endpoints (Figure~\ref{fig:conetube}).

\begin{figure}[t]
\centering
\begin{tikzpicture}[scale=1.0]
\fill[black!12] (0,0) -- (1.67,1.5) -- (4,1.5) -- (4,0) -- cycle;
\draw[->] (0,0) -- (4.6,0) node[below] {\small $r$};
\draw[->] (0,0) -- (0,2.3) node[left] {\small $s$};
\draw (0,0) -- (2.3,2.07);
\node[rotate=42] at (1.05,1.28) {\small $s=r \Delta(\theta_i)$};
\draw[dashed] (0,1.5) -- (4,1.5);
\node[left] at (0,1.5) {\small $t$};
\draw (4,0.06) -- (4,-0.06); \node[below] at (4,-0.06) {\small $r_0$};
\draw[dotted] (1.67,0) -- (1.67,1.5);
\draw (1.67,0.06) -- (1.67,-0.06);
\node[below] at (1.67,-0.1) {\small $t\cot\frac{\theta_i}2$};
\node at (2.75,0.72) {\small $\min\{t,r\Delta(\theta_i)\}$};
\end{tikzpicture}
\caption{The tube region of one incidence (gap, endpoint) in the
isometric coordinates $(r,s)\mapsto r\omega+su$ of
Proposition~\ref{prop:cone}: feet at radius $r$ carry fibers truncated at
$\min\{t,r\Delta(\theta_i)\}$ (shaded). Its area is
$\int_0^{r_0}\min\{t,r\Delta(\theta_i)\} dr
=r_0t-\tfrac12\cot(\theta_i/2) t^2$ for $\theta_i<\pi$; the vertex fan
contributes $\gamma_C t^2$.}
\label{fig:conetube}
\end{figure}

\begin{proposition}\label{prop:cone}
Let $C\subseteq\R^2$ be a cone whose link $K$ is a finite union of $m\ge0$
pairwise disjoint closed arcs (possibly points), $K\ne\Sph^1$, and let
$r_0>0$. List the gaps of $K$ as $G_1,\dots,G_m$ --- for a closed
$K\ne\emptyset,\Sph^1$, the number of complementary gaps equals the number
$m$ of connected components of $K$; if $K=\emptyset$, set $m=0$ --- with
widths $\theta_1,\dots,\theta_m$. Write
\[
\Delta(\theta):=\begin{cases}\tan(\theta/2), & 0<\theta<\pi,\\
+\infty, & \pi\le\theta\le2\pi,\end{cases}
\]
for the normalized fiber length of Lemma~\ref{lem:sidefiber}, and put
$\Delta_{\min}:=\min_i\Delta(\theta_i)$ (with $\min\emptyset:=+\infty$, and
$1/\Delta:=0$ when $\Delta=+\infty$). Then, for
\[
T_{C,r_0}(t):=\lambda_2\bigl(\{y\in\Unp(C): 0<\dist(y,C)\le t,\
\xi_C(y)\in\bar B(0,r_0)\}\bigr)
\]
and every $0<t\le r_0\Delta_{\min}$,
\[
T_{C,r_0}(t)=2m r_0t+
\Big(\gamma_C-\sum_{i: \theta_i<\pi}\cot\tfrac{\theta_i}{2}\Big)t^2,
\]
where $\gamma_C:=\tfrac12 \mathcal H^1\bigl(\{u\in\Sph^1:\
d_{\Sph^1}(u,K)\ge\tfrac\pi2\}\bigr),$ with $\gamma_C=\pi$ when $K=\emptyset$.
\end{proposition}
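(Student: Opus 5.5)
The plan is to decompose the feet-localized tube by the foot $\xi_C(y)$ and integrate fiber lengths directly; the local Steiner formula is not needed because everything is explicit for a cone. Write $K=\bigcup_{j}A_j$ with closed arcs $A_j$, and separate feet at the apex $0$ from feet $r\omega$ with $0<r\le r_0$ and $\omega\in K$. Points with several feet are null (Lemma~\ref{lem:federer}(i)), so only feet of points of $\Unp(C)$ matter. The tube then splits into three disjoint pieces, and the first step is to identify the proximal normals at each kind of foot, together with their cut lengths, using homogeneity $\delta_C(r\omega,u)=r\tilde\delta(\omega,u)$.

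\emph{Interior of an arc.} If $\omega$ lies in the relative interior of a nondegenerate arc, the boundary of $C$ near $r\omega$ consists of the rays over $K$. So $r\omega$ is an interior point of $C$ and contributes nothing, unless the ray is a boundary ray. I will first check that only arc endpoints matter. Since $K$ is a union of closed arcs, the cone is a union of closed sectors. A foot $r\omega$ with $\omega$ interior to an arc lies in the interior of $C$, so $\delta=0$. For a degenerate arc $\{\omega\}$, the ray is a segment-like boundary with normals on both sides.

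\emph{Endpoints of arcs.} Each endpoint $\omega$ bordering a gap $G_i$ of width $\theta_i$ carries exactly one normal $u$ pointing into $G_i$ and orthogonal to $\omega$; a degenerate arc has two such incidences, one per adjacent gap. The number of (gap, endpoint) incidences is therefore $2m$. By Lemma~\ref{lem:sidefiber}, $\tilde\delta(\omega,u)=\Delta(\theta_i)$. In the coordinates $(r,s)\mapsto r\omega+su$, which are isometric on the relevant region, the tube piece is $\{0<r\le r_0,\ 0<s\le\min\{t,r\Delta(\theta_i)\}\}$. Its area is $\int_0^{r_0}\min\{t,r\Delta\}\,dr = r_0t-\tfrac12 t^2\cot(\theta_i/2)$ when $\theta_i<\pi$ and $t\le r_0\Delta$, and $r_0t$ when $\theta_i\ge\pi$. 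Two incidences per gap give $2mr_0t-\sum_{\theta_i<\pi}\cot(\theta_i/2)\,t^2$.

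\emph{Apex.} A point $y$ has foot $0$ exactly when $\langle y/|y|,\omega\rangle\le0$ for all $\omega\in K$, i.e. $d_{\Sph^1}(y/|y|,K)\ge\pi/2$. Such $y$ are at distance $|y|$ from $C$, and the corresponding fibers are infinite. The region is a union of sectors of total angle $2\gamma_C$, truncated at radius $t$, so its area is $\gamma_C t^2$. For $K=\emptyset$ the whole disc has foot $0$, giving $\pi t^2$.

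The main obstacle is the disjointness and exhaustion claim. Every $y\in\Unp(C)$ with $\dist(y,C)\le t$ and foot in $\bar B(0,r_0)$ must lie in exactly one of the described regions, and the coordinate maps must be injective there. I would handle this by the characterisation $\xi_C(y)=x \iff s\le\delta_C(x,u)$, with the convention that the apex absorbs directions within angle $\ge\pi/2$ of $K$. Disjointness then follows because distinct pairs $(x,u)$ yield distinct $y$ on $\Unp(C)$. The hypothesis $t\le r_0\Delta_{\min}$ is used only to guarantee that the truncation $\min\{t,r\Delta\}$ switches at $r=t\cot(\theta_i/2)\le r_0$, which makes the integral exactly quadratic.
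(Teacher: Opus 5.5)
Your proposal is correct and follows essentially the same route as the paper. You partition the tube by the foot into the vertex fan of directions at angular distance $\ge\pi/2$ from $K$, of area $\gamma_C t^2$, and the $2m$ incidence strips parametrized isometrically by $(r,s)\mapsto r\omega+su$, with cut lengths $r\Delta(\theta_i)$ from Lemma~\ref{lem:sidefiber}, each integrating to $r_0t-\tfrac12\cot(\theta_i/2)t^2$ (or $r_0t$ when $\theta_i\ge\pi$). The only difference is that the paper first gets $u\perp\omega$ at a nonzero foot from first-order optimality along the ray, whereas you read it directly off the classification part of Lemma~\ref{lem:sidefiber}; both are valid.
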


The proof, by integration of the side fibers of
Lemma~\ref{lem:sidefiber} and of the vertex fan, is in
Appendix~\ref{app:pfmain}.

\section{The main theorem}\label{sec:main}

\begin{theorem}\label{thm:main}
Let $S\subset\R^2$ be compact, admitting a conic certificate of radius
$\rho$ with data $\Sigma$, $(r_\sigma)$, $(C_\sigma)$, and write
\[
\Gamma_{\mathrm{reg}}:=\Bigl\{z\in\partial S: z\notin
\bigcup_{\sigma\in\Sigma}B(\sigma,r_\sigma)\Bigr\},\qquad
\eta:=N(S)\cap\bigl(\Gamma_{\mathrm{reg}}\times\Sph^1\bigr).
\]
For $\sigma\in\Sigma$ let $m_\sigma$ be the number of connected components of
the link $K_\sigma$ (finite by Theorem~\ref{thm:link}), let the gaps of
$K_\sigma$ be the connected components $G$ of $\Sph^1\setminus K_\sigma$,
and let $\gamma_\sigma:=\tfrac12 \mathcal H^1 (\{u\in\Sph^1:\
d_{\Sph^1}(u,K_\sigma)\ge\tfrac\pi2\})$
be the fan constant of $\sigma$. Convention for $K_\sigma=\emptyset$ (an
isolated point of $S$): $m_\sigma:=0$, there are no gaps, and $\gamma_\sigma=\pi$, as in
Proposition~\ref{prop:cone}. Then, for all
$0<t<\rho$,
\begin{equation}\label{eq:mainformula}
V_S(t) = \Bigl[\Theta_1(\eta)+2\sum_{\sigma\in\Sigma}m_\sigma r_\sigma\Bigr] t
 + \Bigl[\tfrac12 \Theta_0(\eta)
+\sum_{\sigma\in\Sigma}\Bigl(\gamma_\sigma
-\sum_{\substack{G \mathrm{gap of} K_\sigma\ |G|<\pi}}
\cot\tfrac{|G|}{2}\Bigr)\Bigr] t^2 ,
\end{equation}
where $\Theta_0,\Theta_1$ are the planar reach measures of
Theorem~\ref{thm:HLW} and $|G|$ denotes the angular width of a gap. In
particular $V_S$ is a polynomial of degree at most $2$ vanishing at $0$ on
$(0,\rho)$, and $\polreach(S) \ge \conreach(S).$
\end{theorem}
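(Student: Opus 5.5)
The plan is to split the tube by feet. Fix $0<t<\rho$. Since points with several feet are Lebesgue-null (Lemma~\ref{lem:federer}(i)), $V_S(t)=\vol(W_t\cap\Unp(S))$. The feet of points of $W_t$ lie in $\partial S$, which is the disjoint union of $\Gamma_{\mathrm{reg}}$ and the sets $\partial S\cap B(\sigma,r_\sigma)$. These last sets are pairwise disjoint by the separation condition. Hence
\[
V_S(t)=\vol\bigl(W_t(\eta)\bigr)+\sum_{\sigma\in\Sigma}\vol\bigl(W_t(\partial S\cap B(\sigma,r_\sigma))\bigr).
\]

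For the regular part I apply Theorem~\ref{thm:HLW} to $\eta$. The feet of $\eta$ lie in the compact set $\partial S$, and by (C2) we have $\delta_S\ge\rho>t$ on $\eta$. So $\eta$ is $r$-bounded, $\min\{t,\delta_S\}=t$ on $\eta$, and \eqref{eq:HLW} gives exactly $\Theta_1(\eta)t+\tfrac12\Theta_0(\eta)t^2$, with finite coefficients.

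For each singular core I reduce to the cone. This uses the localization Lemma~\ref{lem:local} (equivalently Corollary~\ref{cor:trunc}), assumed from earlier. Take $y$ with $\dist(y,S)\le t<\rho\le r_\sigma$ and a foot in $B(\sigma,r_\sigma)$. Then $y$ and every competing nearest point lie in $\bar B(\sigma,3r_\sigma)$, where $S$ coincides with $\sigma+C_\sigma$. From this I would show two things: $\dist(y,S)=\dist(y,\sigma+C_\sigma)$, and the feet sets agree. Any point of $\sigma+C_\sigma$ closer to $y$ lies within $r_\sigma+2t$ of $\sigma$, so it lies in $S$; the converse direction holds by the same argument.

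Consequently $W_t(\partial S\cap B(\sigma,r_\sigma))$ equals, up to a translation by $\sigma$, the cone tube whose feet lie in the open ball $B(0,r_\sigma)$. This differs from $T_{C_\sigma,r_\sigma}(t)$ only by feet on the sphere of radius $r_\sigma$. Those feet form finitely many points (the link is a finite union of arcs), and their tube portion is a finite union of segments, hence null.

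I then invoke Proposition~\ref{prop:cone} with $r_0=r_\sigma$. Its hypotheses hold by Theorem~\ref{thm:link}: the link is a finite union of $m_\sigma$ closed arcs, $K_\sigma\ne\Sph^1$, and every gap has width at least $\theta_\sigma=2\arctan(\rho/r_\sigma)$. Therefore $\Delta_{\min}\ge\tan(\theta_\sigma/2)=\rho/r_\sigma$, so $t<\rho\le r_\sigma\Delta_{\min}$, which is the admissible range. The proposition yields $2m_\sigma r_\sigma t+(\gamma_\sigma-\sum_{|G|<\pi}\cot\tfrac{|G|}{2})t^2$. The empty-link convention matches: an isolated point gives $\pi t^2$. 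Summing the regular and singular contributions gives \eqref{eq:mainformula}.

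The consequence $\polreach(S)\ge\rho$ follows by continuity of $V_S$ at $0$ and at $\rho$ (closed interval). Taking the supremum over certificates gives $\polreach(S)\ge\conreach(S)$. The main obstacle I expect is the exact identification of the localized tubes of $S$ and of the cone, including the claim that uniqueness of feet transfers in both directions. The rest is bookkeeping from the cited results.
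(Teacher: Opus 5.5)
Your proposal is correct and follows the paper's proof essentially step by step: partition of the tube by feet into $\eta$ and the cores, Theorem~\ref{thm:HLW} with $\min\{t,\delta_S\}=t$ on $\eta$, two-sided exact localization to $\sigma+C_\sigma$ as in Lemma~\ref{lem:local}, discarding the null set of feet on the circle of radius $r_\sigma$, and Proposition~\ref{prop:cone} with $r_0=r_\sigma$, which applies for $t<\rho$ because Theorem~\ref{thm:link} gives $r_\sigma\Delta_{\min}\ge\rho$. One small precision: the feet on that circle are finitely many not simply because the link is a finite union of arcs (the arcs themselves are uncountable), but because, by Lemma~\ref{lem:sidefiber}, only the points $r_\sigma\omega$ with $\omega$ a gap endpoint carry proximal normals, and each carries at most two.
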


The right-hand side of \eqref{eq:mainformula} depends on the chosen
certificate $(\rho,\Sigma,(r_\sigma))$, while the left-hand side does not:
different certificates redistribute the same coefficients between the regular
and the conical terms (Remark~\ref{rem:gauss} shows the mechanism on the
L-shape). The proof, in Appendix~\ref{app:pfmain}, partitions the tube by the
location of the nearest point: the regular contribution follows from the
local Steiner formula, while each singular contribution is identified with
the localized tube of the corresponding cone (Appendix~\ref{app:loclem}).

\begin{remark}\label{rem:gauss}
The certificate-dependence cancels in the sum. For the L-shape
$L=([0,2]\times[0,1])\cup([0,1]\times[0,2])$ with the certificate of
Example~\ref{ex:Lshape} ($\Sigma=\{q\}$, $q=(1,1)$, any
$0<\rho=r_q\le\tfrac13$), the link is a single arc of width
$\tfrac{3\pi}2$, with one gap of width $\tfrac\pi2$ and empty fan: the
corner contributes $2r_q t$ to the linear coefficient and
$-\cot\tfrac\pi4=-1$ to the quadratic one. The regular zone carries
$\Theta_1(\eta)=8-2r_q$ (the perimeter minus the two wall segments hidden
in $B(q,r_q)$) and turning $\tfrac12\Theta_0(\eta)=\tfrac{5\pi}4$ from the
five convex right angles. The linear coefficient is
$(8-2r_q)+2r_q=8=\Per(L)$, the $r_q$-dependence cancelling, and the
quadratic one is $\tfrac{5\pi}4-1$, in agreement with
Proposition~\ref{prop:planar} below ($\beta_v=$ exterior opening $=$ gap
width): the decomposition depends on the certificate, but the polynomial
does not. A direct partition of the tube by feet gives the same polynomial
on the larger interval $(0,1)$.
\end{remark}

\begin{proposition}\label{prop:polreachL}
$\polreach(L)=1$. The L-shape thus realizes the exact chain
\[
\reach(L)=0 < \conreach(L)=\tfrac13 < \polreach(L)=1 .
\]
\end{proposition}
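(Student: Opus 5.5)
We would prove Proposition~\ref{prop:polreachL} by computing $V_L$ directly, without the certificate machinery, and then showing that the resulting quadratic stops being valid just past $t=1$. Write $L=R_1\cup R_2$ with $R_1=[0,2]\times[0,1]$ and $R_2=[0,1]\times[0,2]$. Since parallel sets commute with unions, $L_t=(R_1)_t\cup(R_2)_t$. Hence
\[
\lambda_2(L_t)=\lambda_2((R_1)_t)+\lambda_2((R_2)_t)-\lambda_2\bigl((R_1)_t\cap(R_2)_t\bigr).
\]
Each $R_i$ is convex, so Steiner's formula gives $\lambda_2((R_i)_t)=2+6t+\pi t^2$ for all $t>0$. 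Everything therefore reduces to the area of the intersection $I_t:=(R_1)_t\cap(R_2)_t$.

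\textbf{Step 1: the intersection for $0<t\le1$.} We have $I_t\subseteq[-t,1+t]^2$, because $(R_1)_t$ lies in the strip $-t\le y\le 1+t$ and $(R_2)_t$ lies in $-t\le x\le 1+t$. We show that $I_t$ is this square minus three rounded corner pieces, each of area $(1-\tfrac\pi4)t^2$, located at $(-t,-t)$, $(1+t,-t)$ and $(-t,1+t)$. The argument is a case check of the four corner regions.

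\begin{itemize}
\item At $(-t,-t)$ both sets are rounded around the common vertex $(0,0)$.
\item At $(1+t,-t)$ the set $(R_1)_t$ contains the whole corner region, since $1+t\le2$. The set $(R_2)_t$ is rounded around its vertex $(1,0)$. The situation at $(-t,1+t)$ is symmetric.
\item At $(1+t,1+t)$ neither set is rounded. A point $(x,y)$ with $1<x\le1+t$ and $1<y\le1+t$ has its nearest point in $R_1$ at $(x,1)$ because $x\le2$, so its distance to $R_1$ is $y-1\le t$; by symmetry the same holds for $R_2$.
\end{itemize}

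The hypothesis $t\le1$ is used exactly in the condition $1+t\le2$. We then get
\[
\lambda_2(I_t)=(1+2t)^2-3\bigl(1-\tfrac\pi4\bigr)t^2,
\]
and therefore
\[
\lambda_2(L_t)=3+8t+\bigl(\tfrac{5\pi}4-1\bigr)t^2,\qquad V_L(t)=8t+\bigl(\tfrac{5\pi}4-1\bigr)t^2=:p(t)
\]
on $[0,1]$. This gives $\polreach(L)\ge1$, which is consistent with Theorem~\ref{thm:main} and Remark~\ref{rem:gauss}.

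\textbf{Step 2: failure beyond $t=1$.} Any quadratic that agrees with $V_L$ on $[0,R]$ with $R>1$ must equal $p$, since $p$ is already determined on $[0,1]$. So it suffices to show that $V_L(t)\ne p(t)$ for all $t\in(1,1+\varepsilon)$.

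For $t>1$ the first three corners of $I_t$ behave exactly as in Step 1. The only change is near $(1+t,1+t)$. Points with $x\in(2,1+t]$ and $y\in(1,1+t]$ now have nearest point $(2,1)$ in $R_1$. The corner point $(1+t,1+t)$ itself satisfies
\[
\dist\bigl((1+t,1+t),R_1\bigr)=\sqrt{(t-1)^2+t^2}>t,
\]
so it lies outside $(R_1)_t$. Hence the square minus the three corner pieces strictly contains $I_t$, and the difference has nonempty interior. This gives $\lambda_2(I_t)<(1+2t)^2-3(1-\tfrac\pi4)t^2$, so $V_L(t)>p(t)$ for all $t>1$ close to $1$. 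Therefore $\polreach(L)=1$.

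The chain $\reach(L)=0<\conreach(L)=\tfrac13<\polreach(L)=1$ then follows from Example~\ref{ex:Lshape}, which gives the reach and conic reach values.

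The main obstacle is Step 1, specifically verifying that no other part of $(R_1)_t$ or $(R_2)_t$ cuts into $[-t,1+t]^2$ when $t\le1$. For example, the far end $x=2$ of $R_1$ and its corner $(2,1)$ must not interfere. We will handle this with the explicit nearest-point formula for a rectangle: the distance from $(x,y)$ to $R_1$ is the Euclidean norm of the coordinatewise excess over $[0,2]\times[0,1]$. Checking each of the nine regions of the square then becomes routine.
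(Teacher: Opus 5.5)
Your proof is correct, and it takes a genuinely different route from the paper's.

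\textbf{The paper's route.} It partitions the tube by feet. On $(0,1)$ it writes $V_L(t)=6t+2\int_0^1\min\{t,a\}\,da+\tfrac{5\pi}{4}t^2$. For $t=1+s$ it identifies two competing changes: the reentrant walls saturate, which adds $(t-1)^2$, while the fans at $(2,1)$ and $(1,2)$ begin to overlap, which subtracts $|\mathcal O_t|$. It then expands $|\mathcal O_{1+s}|=s^2-\tfrac13 s^3+O(s^4)$ to show that the quadratic corrections cancel and the first deviation is $+\tfrac13 s^3$.

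\textbf{Your route.} You write $L$ as a union of two convex boxes, apply Steiner's formula to each, and compute $\lambda_2\bigl((R_1)_t\cap(R_2)_t\bigr)$ exactly from the clipped-coordinate distance formula. This gives the exact identity $V_L(t)=p(t)+M(t)$, where $M(t)$ is the area of the part of the block $(1,1+t]^2$ that leaves $I_t$. Positivity of $M$ then follows at once from $\dist\bigl((1+t,1+t),R_1\bigr)>t$.

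\textbf{Comparison.} Your $M(t)$ is the paper's $(t-1)^2-|\mathcal O_t|$, so both arguments agree that $M(1+s)=\tfrac13 s^3+O(s^4)$. Your approach needs no asymptotic expansion and actually shows $V_L(t)>p(t)$ for every $t>1$. It also re-derives the quadratic on $[0,1]$ independently of the fiber bookkeeping. Its cost is that it relies on $L$ being a union of two boxes whose parallel sets can be intersected by hand. The paper's computation instead stays inside the proximal-normal framework of the article and isolates the mechanism behind the defect: wall saturation against fan overlap, cancelling at second order.

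\textbf{Two small points.}
\begin{itemize}
\item In Step~2 you say the three corners ``behave exactly as in Step~1.'' But the Step~1 reason at $(1+t,-t)$, namely $1+t\le 2$, is no longer true for $t>1$. You only need the inclusion of $I_t$ in the square minus the three corner pieces. At $(1+t,-t)$ this follows from $(R_2)_t$ alone, since $\dist\bigl((x,y),R_2\bigr)=\sqrt{(x-1)^2+y^2}$ on that block for every $t$. Symmetrically, at $(-t,1+t)$ it follows from $(R_1)_t$ alone.
\item $\reach(L)=0$ is stated in the introduction and in Remark~\ref{rem:reachincl}, not computed in Example~\ref{ex:Lshape}. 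It is in any case immediate: the points $(1+\varepsilon,1+\varepsilon)$ have the two feet $(1+\varepsilon,1)$ and $(1,1+\varepsilon)$.
\end{itemize}
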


The proof, an elementary expansion of the fan overlap at $t=1$, is in
Appendix~\ref{app:pfL}.

\section{A one-dimensional converse for the cut function}\label{sec:converse}

Theorem~\ref{thm:main} is a sufficiency statement, and it is natural to ask
how far exact conicity is from being necessary. At the level of a single monotone family of wall fibers one obtains a
complete one-dimensional characterization, because the truncated integral
remembers the inverse of the cut function.

\begin{proposition}\label{prop:converse}
Let $r_0>0$, let $\delta:[0,r_0]\to[0,\infty)$ be continuous and strictly
increasing with $\delta(0)=0$, and set
\[
I_\delta(t):=\int_0^{r_0}\min\{t,\delta(a)\} da,
\qquad 0<t<\delta(r_0).
\]
Then $I_\delta$ is differentiable on $(0,\delta(r_0))$, with
$I_\delta'(t) = r_0-\delta^{-1}(t).$ 
In particular, $I_\delta$ agrees with a polynomial of degree at most two on
some interval $(0,\varepsilon)$ if and only if $\delta(a)=ca$ on some
interval $(0,\varepsilon')$, for a constant $c>0$.
\end{proposition}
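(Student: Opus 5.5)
The plan is to compute $I_\delta$ explicitly by splitting the integration domain at the point where the integrand switches from $\delta(a)$ to $t$, and then read off both claims from the resulting formula for the derivative.

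\emph{Explicit formula and derivative.} Since $\delta$ is continuous and strictly increasing with $\delta(0)=0$, it is a homeomorphism of $[0,r_0]$ onto $[0,\delta(r_0)]$. For $0<t<\delta(r_0)$ put $a_t:=\delta^{-1}(t)\in(0,r_0)$. On $[0,a_t]$ we have $\delta\le t$, and on $[a_t,r_0]$ we have $\delta\ge t$. Hence
\[
I_\delta(t)=\int_0^{a_t}\delta(a)\,da+(r_0-a_t)\,t .
\]
Rather than differentiate this expression (which would need differentiability of $\delta^{-1}$), I will use the layer-cake identity $\min\{t,\delta(a)\}=\int_0^t \mathbf 1\{\delta(a)>s\}\,ds$. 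Fubini then gives
\[
I_\delta(t)=\int_0^t \lambda_1\{a\in[0,r_0]:\delta(a)>s\}\,ds=\int_0^t\bigl(r_0-\delta^{-1}(s)\bigr)\,ds .
\]
The integrand $s\mapsto r_0-\delta^{-1}(s)$ is continuous, because the inverse of a continuous strictly monotone function on an interval is continuous. By the fundamental theorem of calculus, $I_\delta$ is $C^1$ on $(0,\delta(r_0))$ with $I_\delta'(t)=r_0-\delta^{-1}(t)$.

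\emph{The equivalence.} Suppose first that $\delta(a)=ca$ on $(0,\varepsilon')$. Then $\delta^{-1}(s)=s/c$ for $0<s<c\varepsilon'$, and integrating gives $I_\delta(t)=r_0t-t^2/(2c)$ on $(0,\min\{c\varepsilon',\delta(r_0)\})$, which is a polynomial of degree two.

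Conversely, suppose $I_\delta=p$ on $(0,\varepsilon)$ with $p\in\R_2[t]$. Then $\delta^{-1}(t)=r_0-p'(t)$ there, so $\delta^{-1}$ is affine: $\delta^{-1}(t)=\alpha+\beta t$. Letting $t\downarrow0$ and using continuity, $\delta^{-1}(0)=0$, so $\alpha=0$. Strict monotonicity gives $\beta>0$; indeed $\beta\ne0$, since otherwise $\delta^{-1}$ would be constant. Inverting, $\delta(a)=a/\beta$ for $a\in(0,\beta\varepsilon')$ with $\varepsilon'$ small enough that $\delta(a)<\varepsilon$, that is, for $a<\delta^{-1}(\varepsilon)$. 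So $c=1/\beta$.

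\emph{Main point of care.} The only delicate step is the Fubini/layer-cake identification of the distribution function. For $s\in[0,\delta(r_0))$ one has $\{a:\delta(a)>s\}=(\delta^{-1}(s),r_0]$, which uses strict monotonicity and continuity; the endpoints have measure zero. Once this is in place, the rest is the fundamental theorem of calculus together with inverting an affine map. No results from earlier in the paper are needed.
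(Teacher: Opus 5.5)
Your proof is correct. The converse direction matches the paper's: $\delta^{-1}=r_0-p'$ is affine, it vanishes at $0$, and its slope is positive by strict monotonicity. The difference is in how you obtain the derivative. The paper works with the difference quotient directly. The increment $\min\{t+h,\delta(a)\}-\min\{t,\delta(a)\}$ vanishes for $a\le\delta^{-1}(t)$, equals $h$ for $a\ge\delta^{-1}(t+h)$, and lies in $[0,h]$ on a transition interval whose length tends to $0$ by continuity of $\delta^{-1}$. This gives $I_\delta'(t)=r_0-\delta^{-1}(t)$ by hand, with no measure theory, though each one-sided limit has to be treated separately. You instead use the layer-cake identity and Tonelli to get the global representation $I_\delta(t)=\int_0^t\bigl(r_0-\delta^{-1}(s)\bigr)\,ds$. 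Differentiability, on both sides at once, then follows from the fundamental theorem of calculus and the continuity of $\delta^{-1}$. Your route costs only a routine measurability check: $\{(a,s):\delta(a)>s\}$ is relatively open. In return it shows at once that $I_\delta$ is $C^1$ and concave on $[0,\delta(r_0))$ with $I_\delta(0)=0$ built in. The forward direction of the equivalence is then a direct integration, rather than the paper's appeal to $I_\delta(0^+)=0$. One small notational slip at the end: the conclusion should read $\delta(a)=a/\beta$ for $a\in(0,\beta\varepsilon)$, i.e.\ $\varepsilon'=\beta\varepsilon=\delta^{-1}(\varepsilon)$. The phrase ``$a\in(0,\beta\varepsilon')$ with $\varepsilon'$ small enough'' conflates the two epsilons.
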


\begin{proof}
Write $a(t):=\delta^{-1}(t)$, a continuous increasing function on
$[0,\delta(r_0)]$ with $a(0)=0$. For $0<t<t+h<\delta(r_0)$,
\[
I_\delta(t+h)-I_\delta(t)
=\int_0^{r_0}\bigl(\min\{t+h,\delta(a)\}-\min\{t,\delta(a)\}\bigr) da .
\]
The integrand vanishes for $a\le a(t)$, equals $h$ for $a\ge a(t+h)$, and
lies in $[0,h]$ on the transition interval $\bigl(a(t),a(t+h)\bigr)$, whose
length tends to $0$ as $h\downarrow0$ by continuity of $a$. Hence
$I_\delta(t+h)-I_\delta(t)=h (r_0-a(t))+o(h)$, and the analogous
computation for $h\to0^-$ gives the same left derivative: $I_\delta'(t)
=r_0-a(t)$, a continuous function of $t$. If $I_\delta$ agrees with a
polynomial of degree at most two on $(0,\varepsilon)$, then
$a=\delta^{-1}$ is affine there; since $a(0^+)=0$ and $a$ is increasing and
not constant, $a(t)=t/c$ on $(0,\varepsilon)$ for some $c>0$, i.e.\
$\delta(a)=ca$ for $0<a<\varepsilon/c$. Conversely, if $\delta(a)=ca$ near
$0$ then $I_\delta'(t)=r_0-t/c$ near $0$, and $I_\delta(0^+)=0$ gives
$I_\delta(t)=r_0t-t^2/(2c)$ there.
\end{proof}

\begin{corollary}\label{cor:converse}
Suppose that on some interval $(0,\varepsilon)$, with
$0<\varepsilon\le\delta(r_0)$, the parallel volume
decomposes as $V_S(t)=p(t)+I_\delta(t)$, where $p$ is a polynomial of
degree at most two and $\delta$ is as in
Proposition~\ref{prop:converse}. If $V_S$ is a polynomial of degree at
most two on $(0,\varepsilon)$, then $\delta(a)=ca$ near $0$ for some
$c>0$.
\end{corollary}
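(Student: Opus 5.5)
The plan is to reduce the statement directly to Proposition~\ref{prop:converse}. On $(0,\varepsilon)$ we have by hypothesis both $V_S(t)=p(t)+I_\delta(t)$ and $V_S(t)=q(t)$ for some $q\in\R_2[t]$. Subtracting the two identities gives $I_\delta(t)=q(t)-p(t)$ for every $t\in(0,\varepsilon)$. The right-hand side is a polynomial of degree at most two, being a difference of two such polynomials.

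Next I check that Proposition~\ref{prop:converse} applies on this interval. The requirement $0<\varepsilon\le\delta(r_0)$ ensures $(0,\varepsilon)\subseteq(0,\delta(r_0))$, which is exactly the range on which $I_\delta$ is defined and analysed there. Hence $I_\delta$ agrees with a polynomial of degree at most two on some interval $(0,\varepsilon)$.

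The ``only if'' direction of Proposition~\ref{prop:converse} then yields $\delta(a)=ca$ on some interval $(0,\varepsilon')$, for a constant $c>0$. This is the claim.

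There is no real obstacle. The only point needing care is that the interval of polynomiality of $V_S$ lies inside the domain of validity of the proposition, and the standing assumption $\varepsilon\le\delta(r_0)$ handles this. Monotonicity, continuity and the condition $\delta(0)=0$ are inherited from the phrase ``$\delta$ as in Proposition~\ref{prop:converse}''.
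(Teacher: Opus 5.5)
Your proposal is correct and matches the paper's proof: subtract to get $I_\delta=V_S-p$, a polynomial of degree at most two on $(0,\varepsilon)\subseteq(0,\delta(r_0))$, then apply the ``only if'' direction of Proposition~\ref{prop:converse}. Your remark that $\varepsilon\le\delta(r_0)$ is what puts the interval inside the proposition's range is the only point of care, and you handled it.
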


\begin{proof}
$I_\delta=V_S-p$ is a polynomial of degree at most two on
$(0,\varepsilon)$; apply Proposition~\ref{prop:converse}.
\end{proof}

The hypothesis that the remaining localized contributions are polynomial
is the precise form of ``no cancellation'' between singularities.

\begin{remark}\label{rem:cancel}
For the contribution of a single wall with a monotone cut function, the
proposition upgrades one-homogeneity from a sufficient device to a
necessary mechanism: an exactly quadratic wall term forces
$\delta(a)=ca$, which is what exact conicity produces
(Theorem~\ref{thm:link} and \eqref{eq:fiberwedge}). What the proposition
does not rule out is cancellation between several families: if two
singularities contribute inverse cut functions
$\delta_1^{-1}(t)=a_1t+bt^2$ and $\delta_2^{-1}(t)=a_2t-bt^2$ with
$a_1,a_2,b>0$, for $t>0$ small enough that both remain increasing, the sum of the two wall terms is a polynomial while neither
cut function is linear. Realizing such a pair geometrically, without
introducing further non-polynomial contributions, is precisely what
Remark~\ref{rem:strict} asks for.
\end{remark}

\begin{remark}\label{rem:strict}
We do not know whether the inclusion
$\{\conreach>0\}\subseteq\{\polreach>0\}$ is strict in the plane. In the
examples of Section~\ref{sec:sharp}, tangential contact, curvature at a
reentrant corner and degenerating link gaps each destroy conic reach and
polynomiality together, and by \cite{HHL} a separating set could be
polynomial only locally. A candidate would need the cancellation of
Remark~\ref{rem:cancel}: two singularities whose inverse cut functions
have opposite quadratic terms.
\end{remark}

\section{An explicit formula for piecewise-$C^2$ boundaries}\label{sec:planar}

For piecewise-$C^2$ planar domains the coefficients can be expressed in
terms of the curvature of the regular arcs and the corner angles, with
all constants explicit. Throughout this section $S\subset\R^2$ is compact, $S=\overline{\operatorname{int}S}$,
and $\partial S$ is a finite disjoint union of simple closed curves
(Jordan curves), each a finite concatenation of $C^2$ arcs meeting at corners with interior angles
$\theta_v\in(0,2\pi)$, $\theta_v\neq\pi$ (the interior angle is measured inside $S$).
By a $C^2$ arc we mean a regular $C^2$ embedding of a compact interval,
$C^2$ up to its endpoints, so that tangents, normals and second-order
Taylor expansions exist at the endpoints; distinct arcs meet only at the
common endpoints prescribed by the concatenation. We write
$\Gamma_1,\dots,\Gamma_M$ for the closed arcs of the decomposition.
Corners with $\theta_v<\pi$ are convex, those with $\theta_v>\pi$ are
reentrant. Write $\partial S_{\mathrm{reg}}$ for $\partial S$ minus the
corners, and let $\kappa$ be the signed curvature of the arcs, normalized through
the outward unit normal $n$ and an arc-length parametrization by
$n'(s)=\kappa(s) \tau(s)$ ($\tau$ the unit tangent), so that the exterior normal
map $z\mapsto z+sn(z)$ has Jacobian $1+s\kappa$. (Orient each component so that the exterior normal is the left normal of
$\tau$; then $\kappa$ is well defined, the outer boundary of a disc is
traversed clockwise, and a hole boundary counterclockwise.) A disc of radius
$r$ has $\kappa\equiv1/r$ and $\int_{\partial S_{\mathrm{reg}}}\kappa ds=2\pi$; the
boundary of an interior hole contributes with the opposite sign.

\begin{definition}\label{def:wedgelike}
A solid wedge of interior opening $\theta\in(0,2\pi)$ is, up to a rigid
motion, the closed cone
$\{r(\cos\phi,\sin\phi): r\ge0, 0\le\phi\le\theta\}$. We say that $S$
is uniformly wedge-like at its reentrant corners, with parameter $\ell>0$,
if at every reentrant corner $v$ the set coincides with a solid wedge
$v+C_v$ of interior opening $\theta_v$ on the ball of radius $\ell$,
$S\cap\bar B(v,\ell)=(v+C_v)\cap\bar B(v,\ell)$ 
--- in particular the two incident arcs are straight there, and no other
part of $\partial S$ enters $\bar B(v,\ell)$ --- and the closed
$\ell/2$-neighbourhoods of distinct reentrant corners are disjoint. The
exterior opening of the wedge at $v$ is $\beta_v:=2\pi-\theta_v\in(0,\pi)$.
\end{definition}

\begin{theorem}\label{thm:planarfree}
Every compact $S=\overline{\operatorname{int}S}$ whose boundary is a
finite disjoint union of piecewise-$C^2$ Jordan curves, and which is
uniformly wedge-like at its reentrant corners
(Definition~\ref{def:wedgelike}), satisfies, for some $t_0>0$ and all
$0<t<t_0$,
\begin{equation}\label{eq:planar}
V_S(t)=\Per(S) t+c_2 t^2,\quad
c_2=\frac12\int_{\partial S_{\mathrm{reg}}}\kappa ds
+\sum_{v: \theta_v<\pi}\frac{\pi-\theta_v}{2}
+\sum_{v: \theta_v>\pi}\cot\frac{\theta_v}{2},
\end{equation}
where $\Per(S)=\mathcal H^1(\partial S)$. Equivalently, by Gauss--Bonnet, with
$\chi(S)$ the Euler characteristic,
\begin{equation}\label{eq:planarGB}
c_2=\pi \chi(S)+\sum_{v: \theta_v>\pi}\psi(\theta_v),\quad
\psi(\theta):=\cot\frac{\theta}{2}+\frac{\theta-\pi}{2}  (<0 \text{for} \theta>\pi).
\end{equation}
Only the reentrant corners produce a correction; convex corners are already accounted
for by the Gauss--Bonnet bookkeeping.
\end{theorem}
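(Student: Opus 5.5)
The plan is to partition the tube $W_t$ according to where the foot lies, as in the proof of Theorem~\ref{thm:main}, and to compute each piece by hand. The three kinds of feet are: points of the regular arcs, convex corners, and the wedge cores around reentrant corners. An alternative is to build a conic certificate directly and invoke Theorem~\ref{thm:main}. Then $\Sigma$ is the set of reentrant corners, $r_v=\ell/6$ so that (C1) holds on $\bar B(v,3r_v)\subset \bar B(v,\ell)$ and the separation condition follows from the $\ell/2$-disjointness, and $C_v$ is the solid wedge. Either way the geometric core is the same: a uniform lower bound on fiber lengths away from the reentrant corners. I will organize the argument around that bound and then do the bookkeeping.

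\emph{Step 1 (fiber bound).} Fix $r_0\le \ell/6$ and write $U:=\bigcup_{\theta_v>\pi}B(v,r_0)$. I want $t_0>0$ such that $\delta_S(z,u)\ge t_0$ for every $(z,u)\in N(S)$ with $z\notin U$. At a regular point $z$, the only proximal normal is the outward normal $n(z)$. Its fiber can be cut in three ways, each excluded for small $t_0$:
\begin{itemize}
\item by curvature, excluded since $|\kappa|$ is bounded on the finitely many compact $C^2$ arcs;
\item by a far-away part of $\partial S$, excluded by compactness together with the embedding, since distinct arcs meet only at corners;
\item by the other arm near a corner; at a convex corner the normals spread, and at a reentrant corner, with $z$ outside the core, the cut is the bisector at length $|z-v|\tan(\beta_v/2)\ge r_0\tan(\beta_v/2)$ by Lemma~\ref{lem:sidefiber}.
\end{itemize}
At a convex corner $v$ the proximal normals form the fan of angle $\pi-\theta_v$ between the two one-sided outer normals. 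A second-order Taylor expansion of the two arcs at $v$ shows that points $v+su$ with $s<t_0$ and $u$ in the fan have $v$ as unique foot. I expect this step, the uniform treatment near convex corners between curved arcs and near the ends of the straight segments leaving a wedge, to be the main obstacle. I would argue by contradiction with sequences, $(z_k,u_k)$ with $\delta_S(z_k,u_k)\to0$, and use the upper semicontinuity of $\delta_S$ (Lemma~\ref{lem:deltameas}) together with compactness to localize the limit at a corner, where the Taylor expansion gives the contradiction.

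\emph{Step 2 (area computation) for $0<t<t_0$.} Below $t_0$ no fiber outside $U$ is truncated, so $\min\{t,\delta_S\}=t$ there.
\begin{itemize}
\item On a regular arc piece $\Gamma$ of length $L_\Gamma$, the exterior normal map $(z,s)\mapsto z+sn(z)$ is injective with Jacobian $1+s\kappa>0$. Its area is therefore $\int_\Gamma\int_0^t(1+s\kappa)\,ds\,dz=L_\Gamma t+\tfrac12 t^2\int_\Gamma\kappa$.
\item Each convex corner contributes the circular sector of area $\tfrac{\pi-\theta_v}{2}t^2$.
\item Inside a reentrant core, Lemma~\ref{lem:local} and exact conicity reduce the computation to Proposition~\ref{prop:cone}, applied to the wedge with one gap of width $\beta_v<\pi$ and empty fan. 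This gives $2r_0t-\cot(\beta_v/2)t^2$, valid as long as $t\le r_0\tan(\beta_v/2)$.
\end{itemize}
Summing, the linear terms add up to $\mathcal H^1(\partial S)=\Per(S)$, because the lengths $2r_0$ of the wall pieces inside the cores complete the regular length. The quadratic term gives \eqref{eq:planar}, using $-\cot(\beta_v/2)=\cot(\theta_v/2)$ because $\theta_v=2\pi-\beta_v$. Multi-foot points are null by Lemma~\ref{lem:federer}(i).

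\emph{Step 3 (Gauss--Bonnet form).} For each boundary curve, oriented as in the section, the total turning $\int\kappa\,ds+\sum_v(\pi-\theta_v)$ equals $+2\pi$ for an outer component and $-2\pi$ for a hole. Summing over components gives $\int_{\partial S_{\mathrm{reg}}}\kappa\,ds+\sum_v(\pi-\theta_v)=2\pi\chi(S)$. Substituting into \eqref{eq:planar} moves the reentrant terms $\tfrac{\pi-\theta_v}{2}$ to the other side and yields $c_2=\pi\chi(S)+\sum_{\theta_v>\pi}\bigl(\cot\tfrac{\theta_v}2+\tfrac{\theta_v-\pi}{2}\bigr)$, which is \eqref{eq:planarGB}. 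Finally, $\psi(\theta)<0$ for $\theta\in(\pi,2\pi)$: we have $\psi(\pi)=0$ and $\psi'(\theta)=\tfrac12\bigl(1-\csc^2(\theta/2)\bigr)<0$ on that interval.
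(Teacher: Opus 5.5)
Your architecture is the paper's. Step~1 is Proposition~\ref{prop:p2auto}, Step~2 is Proposition~\ref{prop:planar}, and Step~3 is the same Gauss--Bonnet bookkeeping. Handling the reentrant cores via Lemma~\ref{lem:local} and Proposition~\ref{prop:cone}, as in the proof of Theorem~\ref{thm:main}, is an equivalent substitute for the direct deficit computation of part~(c). You do need to add $t_0\le r_0$, because Lemma~\ref{lem:local} has to keep tube points with feet in $B(v,r_0)$ inside $\bar B(v,2r_0)$. Your derivative check that $\psi<0$ is a welcome addition. The gap is Step~1: you correctly identify it as the crux, but you do not carry it out, and the mechanism you propose for it fails.

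\emph{Upper semicontinuity points the wrong way.} For $(z_k,u_k)\to(z,u)$, Lemma~\ref{lem:deltameas} gives $\limsup_k\delta_S(z_k,u_k)\le\delta_S(z,u)$. When $\delta_S(z_k,u_k)\to0$ this says only $\delta_S(z,u)\ge0$. It neither localizes the limit at a corner nor yields a contradiction there. At a convex vertex $v$ the limit pair $(v,\nu_+)$ has a long fiber, and the danger is precisely that $\delta_S$ degenerates along an arm as $z_k\to v$.

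\emph{What the paper uses instead} is a uniform local estimate on competitors. Any $q\in\partial S$ cutting the fiber at length $s$ satisfies $|q-z_k|<2s$. For each possible limit $z$ one exhibits $s_0>0$, depending only on $z$, such that for all large $k$ no such $q$ exists when $s\le s_0$. At a regular limit point this is the one-arc bound $|\langle\nu(p),q-p\rangle|\le C|q-p|^2$. At a convex vertex, your Taylor expansion treats only feet at $v$ itself, which is the paper's Subcase~2a.

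\emph{The missing case} is $z_k$ at arclength $a_k\downarrow0$ on one arm, with $u_k=\nu(z_k)$, and $q$ at arclength $a$ on the other arm. The one-arc argument is useless here, since its radius of validity shrinks with the distance to the endpoint. Write $z_k+su_k-q=a_k'\tau_++s\nu_+-a\tau_-+R$, with $a_k'=a_k(1+O(s))$ and $|R|=O(a_k^2+a^2)$. One then needs the two-arm inequality $|a_k'\tau_++s\nu_+-a\tau_-|^2\ge s^2+c_\theta(a_k'^2+a^2)$, where $c_\theta=\min\{1,1-\cos\theta_v\}$. This inequality rests on $\langle\nu_+,\tau_-\rangle=-\sin\theta_v\le0$. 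Finally one absorbs the cross term of order $(s+a_k)(a_k^2+a^2)$ by taking $s_0$ and $a_k$ small.

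This is exactly where $\theta_v<\pi$ is used. For $\theta_v>\pi$ the term $2sa\sin\theta_v$ is negative and produces precisely the bisector cut $a_k\tan(\beta_v/2)$. The phrase ``the normals spread'' does not substitute for this estimate.
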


The proof occupies the rest of the section:
Proposition~\ref{prop:planar} establishes the expansion, with an explicit
admissible $t_0$, under (P1) together with a uniform fiber bound (P2), and
Proposition~\ref{prop:p2auto} shows that the fiber bound is automatic in
this class. Figure~\ref{fig:planarlocal} shows the three local
contributions to \eqref{eq:planar}.

\begin{figure}[t]
\centering
\begin{tikzpicture}[scale=0.56]
\fill[black!10] (-2,-1.5) rectangle (2,0);
\fill[black!10] (0,0) ++(-2,0) arc (180:0:2) -- (2,0) -- cycle;
\draw[thick] (-2,0) arc (180:0:2);
\foreach \a in {35,75,115,155} {\draw[->] (\a:2) -- (\a:2.62);}
\node at (0,3.1) {\small $t+\tfrac12\kappa t^2$ per unit length};
\node at (0,-2.3) {\small (a) regular arc};
\begin{scope}[xshift=7.0cm]
\fill[black!10] (0,0) -- (2.1,0) -- (2.1,2.1) -- (0,2.1) -- cycle;
\draw[thick] (2.1,0) -- (0,0) -- (0,2.1);
\fill[black!28] (0,0) -- (-1.15,0) arc (180:270:1.15) -- cycle;
\draw (0,0) -- (-1.15,0) (0,0) -- (0,-1.15);
\node at (-1.35,-1.35) {\small $\tfrac12(\pi-\theta_v) t^2$};
\draw (0.55,0) arc (0:90:0.55); \node at (45:0.88) {\small $\theta_v$};
\node at (0.5,-2.3) {\small (b) convex corner};
\end{scope}
\begin{scope}[xshift=14.0cm]
\fill[black!10] (35:2.5) arc (35:325:2.5) -- (0,0) -- cycle;
\draw[thick] (0,0) -- (-35:2.5); \draw[thick] (0,0) -- (35:2.5);
\draw[dotted] (0,0) -- (2.7,0);
\fill[pattern=north east lines] (0,0) -- (1.22,0) -- (0.401,0.573) -- cycle;
\draw[thin] (0.401,0.573) -- (1.22,0);
\draw[->] (-35:1.0) -- (1.221,-0.001);
\draw[->] (-35:2.0) -- (2.040,-0.575);
\node at (2.62,0.72) {\small deficit $\tfrac12\cot\tfrac{\beta_v}2 t^2$};
\draw (0.62,0) arc (0:-35:0.62); \node at (-16:1.0) {\small $\tfrac{\beta_v}2$};
\node at (0.4,-2.9) {\small (c) reentrant corner};
\end{scope}
\end{tikzpicture}
\caption{The three local contributions to the quadratic coefficient in
\eqref{eq:planar}. (a) A regular arc contributes its turning,
$\tfrac12\int\kappa ds$. (b) A convex corner contributes the area of its
exterior normal fan, of opening $\pi-\theta_v$. (c) At a reentrant corner
the exterior bisector (dotted) truncates the wall fibers near the vertex;
the hatched deficit on each wall equals $\tfrac12\cot(\beta_v/2) t^2$,
giving the correction $\cot(\theta_v/2) t^2<0$.}
\label{fig:planarlocal}
\end{figure}

\begin{proposition}\label{prop:planar}
Assume in addition that
\begin{itemize}
\item[(P1)] $S$ is uniformly wedge-like at its reentrant corners, with
parameter $\ell>0$ (Definition~\ref{def:wedgelike});
\item[(P2)] there is $\rho>0$ such that
\[
\delta_S(z,u) \ge \rho
\qquad\text{for every }(z,u)\in N(S)\text{ with }
z\notin\bigcup_{v: \theta_v>\pi}B(v,\ell/2);
\]
this covers in particular the fibers over the exterior normal fans at the
convex corners.
\end{itemize}
Then, with
$t_0:=\min\{\rho, \ell/4, (\ell/2)\tan(\beta_{\min}/2), 1/\|\kappa\|_\infty\}$ and
$\beta_v:=2\pi-\theta_v$ (conventions: $\beta_{\min}:=\min_{\theta_v>\pi}(2\pi-\theta_v)$; the angular
term is $+\infty$ if there are no reentrant corners, the last is $+\infty$ if
$\kappa\equiv0$), the expansion \eqref{eq:planar} holds --- equivalently
\eqref{eq:planarGB} --- for all $0<t<t_0$.
\end{proposition}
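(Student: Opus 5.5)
The approach is to partition the tube $W_t$ by feet and compute each piece, in the same way as the proof of Theorem~\ref{thm:main}. One could deduce the result from Theorem~\ref{thm:main} by building a conic certificate with $\Sigma$ the reentrant corners and $r_v=\ell/2$. The factor-$3$ separation in (C1) does not match (P1), however, so I would argue directly with the feet partition. Fix $0<t<t_0$ and split $\partial S$ into three kinds of pieces. The first is the open regular arcs outside $\bigcup_{\theta_v>\pi}B(v,\ell/2)$. The second is the convex corners. The third is, for each reentrant corner $v$, the two wall segments $\partial S\cap \bar B(v,\ell/2)$ (straight by (P1)) together with $v$ itself. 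Since two-feet points are Lebesgue-null (Lemma~\ref{lem:federer}(i)), $V_S(t)$ is the sum of the areas of the feet-localized tubes $W_t(\cdot)$ over these pieces.

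\emph{Regular arcs and convex corners.} Here (P2) gives $\delta_S\ge\rho>t$ on every proximal normal, so $\min\{t,\delta_S\}=t$. At a regular point $z$ of an arc the proximal normal is the outer normal $n(z)$ only. At a convex corner it is the full exterior fan of opening $\pi-\theta_v$. For fibers of length $<t_0\le 1/\|\kappa\|_\infty$ the map $(z,s)\mapsto z+sn(z)$ has Jacobian $1+s\kappa>0$. By Theorem~\ref{thm:HLW}, or simply by the area formula on this injective parametrization, each arc contributes $\int(t+\tfrac12\kappa t^2)\,ds$ and each convex corner contributes the sector area $\tfrac12(\pi-\theta_v)t^2$. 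Proximal normals from the interior side cannot occur, because $S=\overline{\operatorname{int}S}$ makes $\delta_S=0$ there.

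\emph{Reentrant corners.} Fix $v$ with exterior opening $\beta_v<\pi$. Since $t<\ell/4$, a point $y$ with a foot in $\bar B(v,\ell/2)$ lies within $3\ell/4<\ell$ of $v$, and so does every competing foot. Inside $\bar B(v,\ell)$, (P1) says $S$ equals the wedge, so the localization argument (Lemma~\ref{lem:local}) identifies $W_t$ over these feet with the feet-localized tube of the cone $C_v$ over $\bar B(0,\ell/2)$. That is exactly $T_{C_v,\ell/2}(t)$ from Proposition~\ref{prop:cone}, with $m=1$, one gap of width $\beta_v$, and an empty fan ($\gamma=0$, since every direction is within $\pi/2$ of the link arc of width $\theta_v>\pi$). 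The condition $t\le (\ell/2)\tan(\beta_v/2)$ is built into $t_0$, so the proposition gives $\ell t-\cot(\beta_v/2)t^2$.

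\emph{Assembling.} Summing the pieces, the linear coefficient is the length of the regular arcs outside the balls plus $\ell$ per reentrant corner, which equals $\Per(S)$. The quadratic coefficient is $\tfrac12\int\kappa\,ds$ (the straight wall segments contribute no curvature) plus the convex-corner terms, minus $\sum\cot(\beta_v/2)$. Since $\beta_v/2=\pi-\theta_v/2$, we have $-\cot(\beta_v/2)=\cot(\theta_v/2)$, which gives \eqref{eq:planar}. For \eqref{eq:planarGB}, apply the Gauss--Bonnet formula for piecewise-$C^2$ curves to each boundary component: $\int\kappa\,ds+\sum_v(\pi-\theta_v)=2\pi\chi(S)$, with holes counted with opposite orientation. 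Then rewrite the reentrant terms as $\cot(\theta_v/2)=\psi(\theta_v)-\tfrac{\theta_v-\pi}2$.

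The main obstacle is the regular part. The area formula needs the map $(z,s)\mapsto z+su$ on the regular normal bundle to be injective, and it needs the arcs near a reentrant ball to contribute exactly $\int(t+\tfrac12\kappa t^2)\,ds$ with no cross-effects from the excised walls. Both should follow from (P2) together with the null two-feet set, but the bookkeeping at the ball boundary $|z-v|=\ell/2$ has to be checked. I would handle it by applying Theorem~\ref{thm:HLW} with $\eta$ the regular normals, which gives $\Theta_1(\eta)t+\tfrac12\Theta_0(\eta)t^2$, and then identifying $\Theta_1$ with arclength and $\Theta_0$ with the turning measure on $C^2$ pieces. That identification uses the Jacobian $1+s\kappa$ for small $s$.
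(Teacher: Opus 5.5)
Your proposal is correct and follows essentially the paper's own proof. It uses the same partition of the tube by feet: regular arcs handled by the area formula with Jacobian $1+s\kappa$, convex-corner sectors of area $\tfrac12(\pi-\theta_v)t^2$, and reentrant walls localized to the pure wedge via $t<\ell/4$, followed by the same Gauss--Bonnet substitution. The injectivity you flag as the main obstacle is settled exactly as in the paper, by (P2) together with the unique-foot clause of Lemma~\ref{lem:deltameas}, and it does not need the nullity of two-feet points. The only cosmetic difference is that you quote Proposition~\ref{prop:cone} (with $m=1$, $\gamma_{C_v}=0$, $r_0=\ell/2$), whereas the paper directly integrates the wall fibers $a\tan(\beta_v/2)$ to obtain the deficit $\tfrac{t^2}{2}\cot(\beta_v/2)$ per wall; this is the same computation.
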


\begin{remark}\label{rem:planarauto}
The proposition is autonomous: it does not use Theorem~\ref{thm:main}. If $S$
admits a conic certificate of radius $\rho$ with $\Sigma=\emptyset$ --- no
reentrant corners --- then (P1) is vacuous for any $\ell>0$ and (P2) is
precisely condition (C2), with the same $\rho$. If $S$
admits a conic certificate of radius $\rho$ with $\Sigma=$ the nonempty set
of reentrant corners and core radii $r_v$, then (P1)--(P2) hold automatically
with $\ell:=2\min_v r_v,$ $\rho_0:=\min\{\rho, (\ell/2)\min_v\tan (\beta_v/2)\}>0$
in place of $\rho$. Indeed, $\ell\le3r_v$, so the conic equality of (C1)
on $\bar B(v,3r_v)$ restricts to the wedge equality required by (P1), with
$C_v$ the cone of the certificate, and the $\ell/2$-neighbourhoods are
disjoint by the separation of the certificate; positivity of $\rho_0$
follows from Theorem~\ref{thm:link}, which gives
$\tan(\beta_v/2)\ge\rho/r_v$. For (P2), take $(z,u)\in N(S)$ with
$z\notin\bigcup_v B(v,\ell/2)$: either $z$ lies outside every core, where
(C2) gives $\delta_S\ge\rho\ge\rho_0$ directly, or $z$ lies on a straight
wall at distance $a\in[\tfrac\ell2,r_v]$ from its corner $v$, where
Corollary~\ref{cor:trunc} and the wedge fiber \eqref{eq:fiberwedge} give
$\delta_S\ge\bigl(a\tan\tfrac{\beta_v}2\bigr)\wedge\rho\ge\rho_0$.
\end{remark}

\begin{proposition}\label{prop:p2auto}
Let $S=\overline{\operatorname{int}S}$ be compact with boundary a finite
union of piecewise-$C^2$ Jordan curves, and suppose (P1) holds for some
$\ell>0$. Then there is $\rho>0$ such that (P2) holds with this $\ell$.
\end{proposition}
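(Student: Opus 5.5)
We argue by contradiction plus compactness. Suppose no $\rho>0$ works. Then there are $(z_k,u_k)\in N(S)$ with $z_k\in\partial S\setminus\bigcup_v B(v,\ell/2)$ and $\delta_S(z_k,u_k)\to0$. By compactness of $\partial S\times\Sph^1$ we pass to a subsequence with $z_k\to z$ and $u_k\to u$; note that $z$ stays at distance $\ge\ell/2$ from every reentrant corner. Since $\delta_S(z_k,u_k)=:s_k>0$ is finite, the point $y_k:=z_k+s_ku_k$ lies on the closure of the cut locus. Equivalently, for every $s>s_k$ the foot of $z_k+su_k$ is not $z_k$, so there is a second foot $q_k$ with $|y_k-q_k|=s_k$ and $q_k\ne z_k$. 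Here we use that the set of points with two feet is dense in the complement of the fiber beyond $\delta_S$, obtained from upper semicontinuity of $\delta_S$ (Lemma~\ref{lem:deltameas}) and a limiting argument. Hence $q_k\to z$ as well. The task is therefore to exclude two distinct feet $z_k,q_k$ collapsing onto $z$ with a common tangent ball of radius $s_k\to0$ centred at $y_k$, with $\bar B(y_k,s_k)\cap\operatorname{int}S=\emptyset$.

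We then split by the local type of $z$ and use only the piecewise-$C^2$ structure near $z$. First, take $z$ interior to a $C^2$ arc. Near $z$, $S$ is locally the subgraph of a $C^2$ function with $|\kappa|\le\|\kappa\|_\infty$, so $S$ has positive reach locally (radius about $1/\|\kappa\|_\infty$, shrunk so that no other arc enters the neighbourhood, by disjointness of the Jordan curves and arcs). Points within that radius of the local piece have a unique foot, which contradicts the existence of $y_k$ with two nearby feet. Second, take $z$ a convex corner ($\theta_z<\pi$). Near $z$, $S$ is the intersection of two $C^2$ sub-graph domains meeting transversally. The exterior is locally a union of two positive-reach complements, and $S$ itself is locally the intersection of two sets of positive reach whose normal cones at $z$ are not opposite because $\theta_z>0$. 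We show directly that for small $s$ every exterior point within $s$ of $z$ has a unique foot. The tangent ball $\bar B(y_k,s_k)$ touches both arcs near $z$ with $s_k\to0$, which after blow-up at scale $s_k$ converges to a disc of radius $1$ touching the tangent wedge of opening $\theta_z<\pi$ at two points on distinct sides (or twice on one side, excluded as in the first case). This is impossible for a convex wedge, since the nearest-point map onto a convex cone is single valued. Third, take $z$ a reentrant corner; this is excluded because $z\notin B(v,\ell/2)$. Points $z_k$ on the straight walls at distance $\ge\ell/2$ from $v$ are covered by (P1): within $\bar B(v,\ell)$ the fibers over walls are cut at length at least $a\tan(\beta_v/2)$ by \eqref{eq:fiberwedge} and Corollary~\ref{cor:trunc}. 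The remaining part of the wall is handled as in the first case. A last situation is $q_k$ lying on a different curve or a far arc. This cannot occur in the limit, because $q_k\to z$ and distinct arcs are separated away from shared endpoints.

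The main obstacle is the convex-corner case and its blow-up argument, in particular making rigorous that a rescaled sequence of two-feet configurations converges to a two-feet configuration for the tangent cone. To do this, we translate by $z$, scale by $1/s_k$, and use $C^2$ convergence of the rescaled arcs to their tangent rays on compacts. The limiting unit disc is disjoint from the interior of the tangent wedge and touches it at two distinct limit points, which we must check remain distinct, i.e. that $|z_k-q_k|/s_k$ stays bounded below. If $|z_k-q_k|=o(s_k)$, both feet lie in a region where a single $C^2$ arc is the only nearby boundary, or they straddle the corner with nearly equal normals. The latter is impossible since $u_k$ is close to both normals while the corner normals differ by $\pi-\theta_z>0$. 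Once this step is secured, compactness yields the uniform $\rho$.
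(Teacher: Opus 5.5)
\textbf{Verdict: genuine gap.} Your overall structure (contradiction, compactness, the case split) is sound, but the step every contradiction rests on is false, and the collapse analysis misses the case where one foot is the vertex.

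\textbf{The second-foot claim.} A finite fiber length does not give the endpoint $y_k=z_k+s_ku_k$ a second foot. Fibers can end at a focal point that has a unique foot. For $S=\{(x,y):|x|\le1,\ -1\le y\le x^2\}$ one has $\delta_S(0,e_2)=\tfrac12$, yet $(0,\tfrac12)$ has the single foot $0$; the two feet $(\pm\sqrt{s-1/2},\,s-\tfrac12)$ appear only for $s>\tfrac12$. Your density-plus-limit argument only produces two-feet points near $y_k$ whose feet converge to feet of $y_k$, possibly both to $z_k$. In this class, focal endpoints at small length do not in fact occur, but that is part of what must be proved. So neither the conclusion of Case~1 (``contradicts the existence of $y_k$ with two nearby feet'') nor the doubly tangent ball of Case~2 is justified.

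There are two easy repairs. First, you can argue at $s>s_k$, as the paper does: there $\dist(z_k+su_k,S)<s$ yields a competitor foot $q\neq z_k$ with $|q-z_k|<2s$. Second, and more naturally for your route, you can prove what your Cases~1--2 really assert: $B(z,\varepsilon_z)\subseteq\Unp(S)$ for each $z\in K:=\partial S\setminus\bigcup_{\theta_v>\pi}B(v,\ell/2)$, by contradiction with genuine two-feet points. You then conclude with Lemma~\ref{lem:federer}(ii), using $\reach(S,z')\ge\reach(S,z)-|z-z'|$ and compactness of $K$.

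\textbf{The blow-up and the collapse case.} The blow-up never yields the contradiction by itself. The limit disc misses the convex tangent wedge, so both limit feet are nearest points of its centre in that wedge and must coincide. Everything therefore rests on excluding the collapse $|z_k-q_k|=o(s_k)$. Your normals argument handles only feet on different arcs, away from the vertex.

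It misses the case $z_k=v$ with $u_k$ in the normal fan and $q_k\in\Gamma_+$, $|q_k-v|=o(s_k)$. Write $\nu_+,\tau_+$ for the outer normal and the tangent of $\Gamma_+$ at $v$, with $\tau_+$ pointing away from $v$. Then $u_k$ and the normal at $q_k$ are both close to $\nu_+$, which lies in the fan, so nothing is contradicted. What rules this out is the constraint $\langle u_k,\tau_\pm\rangle\le0$, forced by proximality, combined with a quadratic estimate along $\Gamma_+$ up to its endpoint (the paper's Subcase~2a). In the two-feet formulation it is even simpler: both feet are then nearest points of $y_k$ in the closed arc $\Gamma_+\subseteq S$, which is impossible below $\reach(\Gamma_+)>0$.

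Likewise, ``excluded as in the first case'' for one-sided collapse near $v$ needs a $C^2$ estimate uniform up to the arc's endpoint, not local positive reach at a fixed interior point. You should also rescale about $z_k$, or treat $|z_k-v|\gg s_k$ as a one-arc case, since rescaling about $z$ can lose the configuration.

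\textbf{Comparison.} With these repairs, your route (local positive reach plus a blow-up to the tangent wedge) is a valid qualitative alternative to the paper's explicit inequalities in Case~1 and Subcases~2a--2b.
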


The proof --- a compactness argument whose only geometric inputs are the
$C^2$ bound along each arc and the transversality of the convex corners
--- is in Appendix~\ref{app:pfauto}. Combining it with
Proposition~\ref{prop:planar} proves Theorem~\ref{thm:planarfree}.

\begin{corollary}\label{cor:pwconreach}
Every set satisfying the assumptions of Theorem~\ref{thm:planarfree} has
positive conic reach.
\end{corollary}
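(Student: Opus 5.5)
The plan is to build an explicit conic certificate out of the data $(\ell,\rho)$ furnished by (P1) and Proposition~\ref{prop:p2auto}. Take $\Sigma$ to be the finite set of reentrant corners $v$ (those with $\theta_v>\pi$), with cone $C_v$ the solid wedge of (P1) and a common core radius $r_v:=r$ to be chosen. Only reentrant corners enter $\Sigma$; convex corners are handled by (C2), in line with Remark~\ref{rem:convexcorners}. If there are no reentrant corners, $\Sigma=\emptyset$, and (P2) is literally (C2) with radius $\rho$, so $\conreach(S)\ge\rho>0$.

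When $\Sigma\neq\emptyset$, the constraints are as follows. Condition (C1) needs $3r\le\ell$, since the wedge equality of (P1) holds on $\bar B(v,\ell)$. The separation condition needs $|v-v'|>6r$; the disjointness of the closed $\ell/2$-neighbourhoods gives $|v-v'|>\ell$, so $r<\ell/6$ suffices. We also need $r\ge\rho'$, where $\rho'$ is the certified radius. So choose $r:=\ell/7$ and $\rho':=\min\{\rho,\,r,\,r\tan(\beta_{\min}/2)\}>0$.

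It remains to check (C2) with radius $\rho'$ for every $(z,u)\in N(S)$ with $z$ outside all cores $B(v,r)$. There are two cases.

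First, if $z\notin\bigcup_v B(v,\ell/2)$, then (P2) gives $\delta_S(z,u)\ge\rho\ge\rho'$.

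Second, if $r\le|z-v|<\ell/2$ for some reentrant $v$, then $z$ lies on one of the two straight walls of the wedge at distance $a\in[r,\ell/2)$ from $v$. I would use the exact localization of Corollary~\ref{cor:trunc} (or Lemma~\ref{lem:local} directly) together with the wedge fiber formula \eqref{eq:fiberwedge}. These give $\delta_S(z,u)\ge\min\{a\tan(\beta_v/2),\,c\}$, where $c$ is a localization scale comparable to $\ell$. The normal $u$ at an interior wall point is the outer normal, and the fiber is cut by the exterior bisector at length $a\tan(\beta_v/2)$, as in Remark~\ref{rem:roles}. Hence $\delta_S(z,u)\ge\rho'$ once $\rho'$ is also below that localization scale.

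The main obstacle is this localization step. The fiber from $z$ of length $s<\rho'$ must not meet any part of $S$ outside $\bar B(v,\ell)$. Any competing foot $q$ of a point $y=z+su$ satisfies $|q-y|\le s$, so $|q-v|\le a+2s<\ell/2+2\rho'\le\ell$ provided $\rho'\le\ell/4$. This holds for our choice, since $r=\ell/7$. Inside $\bar B(v,\ell)$, $S$ coincides with the wedge, so the feet computation there is exactly the cone computation of Lemma~\ref{lem:sidefiber}. This settles (C2).

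Alternatively, one could simply invoke Remark~\ref{rem:planarauto} in reverse. I would present the direct verification above, since it exhibits a certificate of radius $\rho'>0$ and hence gives $\conreach(S)\ge\rho'>0$.
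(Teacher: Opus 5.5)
Your proposal is correct and follows essentially the same route as the paper: $\Sigma$ is the set of reentrant corners with the (P1) wedges as cones, the core radii satisfy $3r_v\le\ell$, and the separation is inherited from the disjoint $\ell/2$-neighbourhoods; (P2) handles points outside the $\ell/2$-balls, and on the walls localization to $\bar B(v,\ell)$ gives $\delta_S\ge\min\{a\tan(\beta_v/2),\ell/4\}$. One small point: with $r=\ell/7$, Corollary~\ref{cor:trunc} itself only reaches wall points with $|z-v|\le 2r-\rho'<\ell/2$, so the whole range $a\in[r,\ell/2)$ is covered by your subsequent direct localization in $\bar B(v,\ell)$ (as in the paper), not by that corollary.
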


\begin{proof}
Let $\ell$ be as in (P1) and let $\rho_0$ be the fiber bound of
Proposition~\ref{prop:p2auto}. Around each reentrant corner $v$ choose
$r_v>0$ with $3r_v\le\ell$ and $|v-v'|>3(r_v+r_{v'})$ for $v\ne v'$; (C1)
holds because $S$ coincides with the wedge on
$\bar B(v,3r_v)\subseteq\bar B(v,\ell)$. For (C2), take $(z,u)\in N(S)$
with $z$ outside every core $B(v,r_v)$. If
$z\notin\bigcup_vB(v,\ell/2)$, then $\delta_S(z,u)\ge\rho_0$. Otherwise
$z$ lies on a wall of some corner $v$, at distance $a\in[r_v,\ell/2]$ from
it. For $y=z+su$ with $0<s<\ell/4$, every foot $q$ of $y$ in $S$
satisfies $|q-v|\le|q-y|+|y-v|\le s+(a+s)<\ell$, so it lies where $S$ and
$v+C_v$ coincide; conversely, every foot of $y$ in $v+C_v$ satisfies the
same bound --- as $\dist(y,v+C_v)\le|y-z|=s$ --- and hence lies in $S$.
Thus $\dist(y,S)=\dist(y,v+C_v)$ with the same feet for all $0<s<\ell/4$,
so $\delta_S(z,u)\wedge\tfrac\ell4=\delta_{v+C_v}(z,u)\wedge\tfrac\ell4$;
since the wedge fiber at $z$ has length $a\tan(\beta_v/2)$, this gives
$\delta_S(z,u)\ge\min\bigl\{a\tan\tfrac{\beta_v}2,\tfrac\ell4\bigr\}$.
Thus (C2) holds with
$\rho:=\min\bigl\{\min_vr_v, \min_vr_v\tan\tfrac{\beta_v}2,\
\tfrac\ell4, \rho_0\bigr\}>0$ (with $\Sigma=\emptyset$ and $\rho:=\rho_0$
when there are no reentrant corners), so $\conreach(S)\ge\rho>0$.
\end{proof}

\begin{remark}\label{rem:trap}
It is tempting to compute the reentrant correction as ``the area counted twice'', i.e.\
as the area of the overlap of the two half-tubes, which gives
$-t^2\bigl(\cot\tfrac\beta2-\cot\beta\bigr)$. This is wrong in general (it
happens to coincide with the correct value only at $\beta=\pi/2$, i.e.\
$\theta_v=3\pi/2$: the two expressions are $1$ and $1$ there). The correct bookkeeping
is by fibers, \eqref{eq:fiberwedge}: what is lost is not an overlap of two
rectangles but the truncation of the normal fibers at the bisector. The
notched square of Example~\ref{ex:notch}, where $\beta\ne\pi/2$,
distinguishes the two formulas unambiguously.
\end{remark}

\begin{example}\label{ex:notch}
$S=$ the square $[0,6]^2$ with the triangle of vertices $(2,0)$, $(3,\sqrt3)$, $(4,0)$
removed, i.e. the polygon with vertices
$(0,0)$, $(2,0)$, $(3,\sqrt3)$, $(4,0)$, $(6,0)$, $(6,6)$, $(0,6)$. Here $\Per=26$, the apex
$(3,\sqrt3)$ is reentrant with $\theta=5\pi/3$, the two notch corners are convex with
$\theta=2\pi/3$, and there are four right angles. Formula \eqref{eq:planarGB} gives
$c_2=\pi+\psi\bigl( {5\pi}/{3}\bigr)
=\pi+\cot({5\pi}/{6})+{\pi}/{3}
={4\pi}/{3}-\sqrt3 .$ 
The ``overlap'' recipe of Remark~\ref{rem:trap} would instead give
${4\pi}/{3}-{2\sqrt3}/{3}$.
\end{example}

\begin{example}\label{ex:pacman}
$S=\{r e^{i\phi}: 0\le r\le 1, \alpha\le\phi\le 2\pi\}$ with
$\alpha\in(0,\pi)$: one reentrant corner at the origin with straight
incident radii, so (P1) holds, and Corollary~\ref{cor:pwconreach} gives
$\conreach(S)>0$. Substitution into \eqref{eq:planarGB} gives
$V_S(t)=(2+2\pi-\alpha)t+\bigl[{3\pi}/2- \alpha/2
+\cot(\pi-\alpha/2)\bigr]t^2$ for small $t$.
\end{example}

\begin{example}\label{ex:cross}
$S=\bigcup_{i=1}^m[-e_i,e_i]\subset\R^2$ ($m$ unit vectors with $e_i\neq\pm e_j$
for $i\neq j$):
$\lambda_2(S)=0$ and $V_S(t)=\lambda_2(S_t)$. Here $\Sigma=\{0\}$ and $C_0=S$'s
directions cone (polyhedral), the $2m$ endpoints have positive reach. Theorem
\ref{thm:main} applies and yields $V_S(t)=4mt+c(e_1,\dots,e_m) t^2$ for small
$t$, where the quadratic coefficient collects the wedge corrections between
consecutive branches and the $2m$ endpoint half-disc caps; we do not compute it
here. Thus the class also contains lower-dimensional sets with genuine
crossings.
\end{example}

\begin{remark}\label{rem:unions}
Let $R>0$. If $S_1,S_2$ are compact with $\dist(S_1,S_2)>2R$, then, for
$0<t<R$, $(S_1\cup S_2)_t=(S_1)_t\mathbin{\dot\cup}(S_2)_t$: a point within
distance $t$ of both sets would give $\dist(S_1,S_2)\le2t<2R$. Hence
$V_{S_1\cup S_2}(t)=V_{S_1}(t)+V_{S_2}(t)$ and
$\polreach(S_1\cup S_2)\ge\min\{\polreach(S_1),\polreach(S_2),R\}$.
\end{remark}

\section{Three failure mechanisms}\label{sec:sharp}

Each example below is a compact planar set with $\polreach=0$, failing conic reach
through one identifiable mechanism. Together they show that the following natural weakenings of exact conicity
do not suffice in general --- $C^1$ tangency, or ``two $C^\infty$ arcs
meeting transversally'' --- and that the metric non-degeneracy carried by
(C2) cannot be dropped. Whether conic reach itself is necessary for local
polynomiality at an isolated failure of positive reach remains open; the
one-dimensional converse of Section~\ref{sec:converse} is the partial
answer we have.

\subsection{A cusp: the exponent $3/2$}\label{subsec:cusp}

Let $S:=\bigl([0,1]\times[-1,0]\bigr) \cup \bigl\{(x,y): 0\le x\le1, x^2\le y\le 1\bigr\},$
the union of two blocks separated by the cuspidal gap
$H=\{0<x<1, 0<y<x^2\}$: the two boundary pieces $y=0$ and $y=x^2$ are $C^\infty$ and meet
tangentially at the origin --- first-order contact: common value and first
derivative, different curvatures --- so the exterior opening degenerates to zero.

For $t>0$, consider first the vertical model, in which distances to the two
walls are measured vertically: the column $\{x=a\}$ contributes length
$\min\{a^2,2t\}$, and
\[
\int_0^1\min\{a^2,2t\} da=2t-\frac{4\sqrt2}{3} t^{3/2},\qquad 0<t\le\tfrac12 .
\]

\begin{proposition}\label{prop:cusp}
There are a polynomial $q\in\R_2[t]$ and $t_1>0$ such that
$V_S(t)=q(t)- {4\sqrt2}/{3}t^{3/2}+O(t^2),$ $0<t\le t_1.$ In particular $\polreach(S)=0$, and the coefficient of the fractional term is the
Euclidean constant, not merely that of the vertical model.
\end{proposition}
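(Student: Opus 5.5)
Fix a small $x_0\in(0,1)$ and split the tube $W_t$ by the location of the feet. The first region is the part of $W_t$ whose feet lie in $\partial S\cap\{x\ge x_0\}$ or on the outer boundary of the union of the two blocks. The second region is the part whose feet lie on the two cusp walls $\{y=0,\,0\le x<x_0\}$ and $\{y=x^2,\,0\le x<x_0\}$ inside the gap $H$.

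\emph{First region.} Away from the origin the set is piecewise $C^2$ with finitely many corners, all transversal. Near $(1,0)$ and $(1,1)$ the gap closes with positive opening, so the corner where $y=x^2$ meets $x=1$ is a convex-type corner of the gap's complement. Hence these fibers have length bounded below by some $\rho_1>0$, apart from the genuine wedge at $(1,0)$, which has positive opening and can be handled conically. The local Steiner formula (Theorem~\ref{thm:HLW}) together with Proposition~\ref{prop:cone} then shows that this region contributes a polynomial of degree at most two in $t$ for $t$ small. The mismatch at the cut $x=x_0$ is a smooth function of $t$, since the fibers there have length bounded below; I would absorb it into $q$ plus an $O(t^2)$ term.

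\emph{Second region.} Parametrize the fibers inside $H$ from each wall by the abscissa $a$ of the foot. The lower wall has normal $e_2$ and foot $(a,0)$. The upper wall has inward normal $(2a,-1)/\sqrt{1+4a^2}$ and Jacobian $1-s\kappa$ with $\kappa=O(1)$. By the local Steiner formula, the contribution equals $\int_0^{x_0}\min\{t,\delta_-(a)\}\,da$ plus $\int\min\{t,\delta_+(a)\}(1+O(\min\{t,\delta_+\}))\,ds_+(a)$, where $\delta_\pm$ are the cut lengths determined by the medial curve of $H$. The two walls differ only in curvature, which enters at order $t^2$. Consequently the whole contribution is $\int_0^{x_0}\bigl[\min\{t,\delta_-\}+\min\{t,\delta_+\}\bigr]da+O(t^2)$.

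\emph{Cut-length asymptotics (the main obstacle).} I must show that $\delta_\pm(a)=\tfrac12a^2(1+O(a))$, so that the medial curve lies at height $\tfrac12 x^2(1+O(x))$ and the Euclidean correction to the vertical half-width is of relative order $a$. Given this, I compute $\int_0^{x_0}\min\{t,\tfrac12a^2(1+O(a))\}\,da$. The transition happens at $a_t=\sqrt{2t}\,(1+O(\sqrt t))$, which gives
\[
x_0t-\tfrac23\sqrt2\,t^{3/2}+O(t^2)
\]
per wall. Doubling yields $2x_0t-\tfrac{4\sqrt2}{3}t^{3/2}+O(t^2)$, which reproduces the vertical model. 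This is also the reason the fractional coefficient is the Euclidean one: the deviation from the vertical model shifts $a_t$ by a relative $O(\sqrt t)$, and the resulting error is $t\cdot O(\sqrt t)\cdot\sqrt t=O(t^2)$.

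To prove the cut-length asymptotics, I would use equidistance. Take the point $y=(a,s)$ on the lower fiber, $0<s\le\tfrac12 a^2$. Its distance to the parabola equals $\min_x\bigl|(x-a,x^2-s)\bigr|$. The minimizer $x$ satisfies $x-a=O(a\cdot a^2)$, so the distance is $(a^2-s)(1+O(a^2))$. Setting this equal to $s$ locates the bisector at $s=\tfrac12a^2(1+O(a^2))$. The same argument applies symmetrically to the upper wall's fibers.

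A second possible cut comes from fibers hitting the far parts of the boundary. This cannot happen once $x_0$ and $t$ are small, because the region near the cusp is isolated.

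\emph{Conclusion.} Summing the two regions gives $V_S(t)=q(t)-\tfrac{4\sqrt2}{3}t^{3/2}+O(t^2)$. Since $t^{3/2}$ cannot be matched by a polynomial of degree at most two up to an $O(t^2)$ error, it follows that $\polreach(S)=0$.
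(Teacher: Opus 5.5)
Your route differs from the paper's, and after one correction it goes through.

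\textbf{How the two routes compare.} The paper also partitions $W_t$ by feet. It assigns the whole open floor segment and the whole open parabolic arc to the gap part. It then proves $\delta_S\ge\tfrac14$ on the rest of the boundary: supporting half-planes give infinite fibers, and at the mouth corners it uses convexity of the two blocks together with $\dist((1,0),B_2)\ge\tfrac12$ and $\dist((1,1),B_1)=1$. Finally it measures the covered gap area column by column:
\begin{itemize}
\item for $a\le3\sqrt t$, it compares with the vertical model $\min\{a^2,2t\}$ via Lemma~\ref{lem:cuspcompare};
\item for $a>3\sqrt t$, it expands the covered height as $t+t\sqrt{1+4a^2}+O(t^2)$ via Lemma~\ref{lem:cuspdepth}.
\end{itemize}
You work in normal coordinates instead. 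The $t^{3/2}$ term then appears directly as the truncation $\min\{t,\delta\}$ of a quadratic cut function, in the spirit of Proposition~\ref{prop:converse}. Your equidistance computation $\delta_\pm(a)=\tfrac12a^2(1+O(a^2))$ is correct: floor fibers are cut by the arc, arc fibers by the floor, and convexity of each block rules out self-cutting. The perturbation bound $\int_0^{C\sqrt t}O(a^3)\,da=O(t^2)$ is also correct.

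Your approach replaces the paper's two distance-comparison lemmas by a single cut-length expansion. The price is a fixed cutoff $x_0$, so $t_1$ depends on $\delta_\pm(x_0)\approx x_0^2/2$. You also need a uniform fiber bound over the walls on $[x_0,1)$ up to the mouth, which you assert rather than prove.

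\textbf{The step that is false as written.} You claim ``the two walls differ only in curvature, which enters at order $t^2$''. You correctly integrate the arc term against $ds_+=\sqrt{1+4a^2}\,da$, but you then replace $ds_+$ by $da$. Split at $a_t\approx\sqrt{2t}$ and use $\sqrt{1+4a^2}-1\le2a^2$. The discrepancy is then
\[
\int_0^{x_0}\min\{t,\delta_+(a)\}\bigl(\sqrt{1+4a^2}-1\bigr)\,da
=t\int_0^{x_0}\bigl(\sqrt{1+4a^2}-1\bigr)\,da+O(t^{5/2}).
\]
For fixed $x_0$ this is of exact order $t$, not $O(t^2)$. It is harmless only because it is linear in $t$ up to $O(t^{5/2})$, so it can be absorbed into $q$. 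This is the slant of the wall that Remark~\ref{rem:cuspflag} singles out. Your argument must say that this term is polynomial, not that it is small.

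\textbf{Two smaller slips.}
\begin{itemize}
\item The point $(1,0)$ is a convex right-angled corner of $S$, not a reentrant wedge. Proposition~\ref{prop:cone} plays no role there; only a fiber lower bound is needed.
\item Since you partition by feet, there is no ``mismatch at $x=x_0$'' to absorb. The local Steiner formula on your first region is exact once $t$ is below its fiber bound.
\end{itemize}
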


Proved in Appendix~\ref{app:pfrest}.

\begin{remark}\label{rem:cuspflag}
It is tempting to run the argument with only Lemma~\ref{lem:cuspcompare}, i.e.\
with a two-sided comparison $C_1(a^2-y)\le\dist\le C_2(a^2-y)$ with uniform
constants. This cannot work: it traps $F(t)$ in a corridor of linear width
in $t$, and a corridor of width $\Theta(t)$ can hide a $t^{3/2}$ term entirely.
The proof in Appendix~\ref{app:pfrest} needs the two extra inputs that
Lemma~\ref{lem:cuspdepth}
provides: the comparison factor tends to $1$ at the tip (which protects the
constant ${4\sqrt2}/3$ coming from the critical region $a\asymp\sqrt t$),
and away from the tip the covered height is $t\sqrt{1+4a^2}$ up to $O(t^2)$
(which makes the slant of the wall an exactly linear, hence harmless,
contribution).
\end{remark}

\subsection{A curved reentrant corner: two discs}\label{subsec:curved}

This example retains transverse boundary arcs and isolates the effect of
curvature at a reentrant corner: the set is a union of two discs, so its
boundary is $C^\infty$ except at two points, where two arcs meet
transversally. Only exact conicity fails.

Let $R>h>0$ and $S:=\bar B\bigl((-h,0),R\bigr)\cup \bar B\bigl((h,0),R\bigr)\subset\R^2 .$
Because $\dist(\cdot,S)=\min_i\dist(\cdot,\bar B_i)$, the parallel set is
$S_t=\bar B((-h,0),R+t)\cup\bar B((h,0),R+t)$: a union of two discs of radius
$r=R+t$ whose centers are at distance $2h$. By the classical lens formula,
\[
\lambda_2(S_t)=2\pi r^2-\Bigl[2r^2\arccos\frac{h}{r}-2h\sqrt{r^2-h^2}\Bigr],
\qquad r=R+t,
\]
so that, with $A(r):=2\pi r^2-2r^2\arccos(h/r)+2h\sqrt{r^2-h^2}$,
\begin{equation}\label{eq:twodiscs}
{ V_S(t)=A(R+t)-A(R) }
\end{equation}
exactly, for all $t>0$.

\begin{proposition}\label{prop:twodiscs}
$V_S$ in \eqref{eq:twodiscs} is real-analytic on a neighbourhood of $0$, but it is not
a polynomial on any interval $(0,\varepsilon)$. Hence $\polreach(S)=0$, although $\partial S$ is
$C^\infty$ except at two points, at which two circular arcs meet at a strictly positive
(reentrant) angle.
\end{proposition}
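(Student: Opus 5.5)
We need to show that $V_S(t)=A(R+t)-A(R)$ is real-analytic near $0$ but agrees with no polynomial on any $(0,\varepsilon)$; then $\polreach(S)=0$ follows at once from Definition~\ref{def:polreach}. The plan is to argue entirely from the explicit formula \eqref{eq:twodiscs}, treating
\[
A(r)=2\pi r^2-2r^2\arccos(h/r)+2h\sqrt{r^2-h^2}
\]
as a function of a real variable $r$ near $R$.

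\emph{Analyticity.} Since $R>h>0$, on a neighbourhood of $r=R$ we have $h/r\in(0,1)$ and $r^2-h^2>0$. The functions $\arccos$ on $(-1,1)$ and $\sqrt{\cdot}$ on $(0,\infty)$ are real-analytic, so $A$ is real-analytic near $R$. Hence $t\mapsto V_S(t)$ is real-analytic on an interval $(-\varepsilon_0,\varepsilon_0)$.

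\emph{Non-polynomiality.} By the identity theorem for real-analytic functions, if $V_S$ agreed with a polynomial $p$ on some $(0,\varepsilon)$, then $A(r)-A(R)=p(r-R)$ would hold on the whole connected domain of analyticity of $A$, namely $(h,\infty)$. It therefore suffices to show that $A$ is not a polynomial on $(h,\infty)$. Differentiating gives
\[
A'(r)=4\pi r-4r\arccos(h/r)-\frac{2r^2 h}{r\sqrt{r^2-h^2}}+\frac{2hr}{\sqrt{r^2-h^2}}\cdots
\]
The $\sqrt{\cdot}$ terms should cancel, leaving
\[
A'(r)=4\pi r-4r\arccos(h/r).
\]
This has a clean geometric meaning: it is the length of the boundary of the union of the two discs, namely two arcs each of angle $2\pi-2\arccos(h/r)$, times $r$. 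Then
\[
A''(r)=4\pi-4\arccos(h/r)-\frac{4h}{\sqrt{r^2-h^2}},
\]
which is not a polynomial in $r$. One can see this in either of two ways. First, as $r\downarrow h$ the term $-4h/\sqrt{r^2-h^2}$ blows up, whereas a polynomial stays bounded on bounded sets. Alternatively, as $r\to\infty$, $\arccos(h/r)\to\pi/2$ and $h/\sqrt{r^2-h^2}\to0$, so $A''\to 2\pi$. A polynomial with a finite limit at infinity is constant, so $A''\equiv2\pi$; but $A''$ is nonconstant (for instance $A'''(r)\neq0$), a contradiction. The blow-up argument at $r=h$ is the simplest, since analytic continuation along $(h,\infty)$ is legitimate.

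\emph{Where to take care.} The only point needing care is the derivative computation, specifically confirming the cancellation of the square-root terms. If that cancellation turns out messy, I would instead use the expansion at infinity,
\[
A(r)=2\pi r^2-\pi r^2+2hr+O(1/r)\cdots,
\]
and observe that $A(r)-\pi r^2-4hr$ tends to $0$ without being identically $0$. For a polynomial, a limit of $0$ at infinity forces the polynomial to be identically $0$, so this again gives a contradiction. The final remark of the statement, about $C^\infty$ arcs meeting transversally at the two points $(0,\pm\sqrt{R^2-h^2})$ with positive reentrant angle, is immediate from the geometry of two intersecting circles with $h<R$.
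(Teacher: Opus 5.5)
Your proof is correct and follows the paper's route: the identity theorem transports any polynomial agreement on $(0,\varepsilon)$ to all of $(h,\infty)$, and the branch-point singularity of $A$ at $r=h$ then rules it out. The only difference is how that singularity is exhibited. The paper expands $A(h+u)$ and isolates the term $-\tfrac{8\sqrt2}{3}h^{1/2}u^{3/2}$. Your exact differentiation (the square-root terms do cancel, giving $A'(r)=4\pi r-4r\arccos(h/r)$, the boundary length of $S_t$) yields $A''(r)=4\pi-4\arccos(h/r)-4h/\sqrt{r^2-h^2}\sim-2\sqrt{2h}\,(r-h)^{-1/2}$ as $r\downarrow h$. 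This is the same singularity two derivatives down, and it is easier to check. One small slip: in your unneeded fallback expansion at infinity, the linear term should read $4hr$, not $2hr$.
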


Proved in Appendix~\ref{app:pfrest}.

\begin{remark}
The mechanism is visible in the fiber picture. Let $S$ contain the half-plane $\{y\le0\}$ near the origin
together with a disc $\bar B((x_0,y_0),R)$ passing through the origin
($x_0^2+y_0^2=R^2$, $x_0<0$), so that $\partial S$ has a reentrant corner at $0$ with
one straight side and one circular side. The equidistant set between the wall and the
disc is $\{(a,\delta): \delta=\dist((a,\delta),\bar B)\}$, i.e.
\begin{equation}\label{eq:deltaquad}
\delta(a)=\frac{(a-x_0)^2+y_0^2-R^2}{2(R+y_0)}
=\gamma a^2+m a ,
\end{equation}
with $\gamma=\frac{1}{2(R+y_0)}>0$ and $m=\frac{|x_0|}{R+y_0}>0$.
The fiber length over the wall point $(a,0)$ is thus a quadratic function of
$a$, not the linear function \eqref{eq:fiberwedge} of the exactly conical
case, and Proposition~\ref{prop:converse} applies: since $\delta$ is not
linear near $0$, the wall contribution
$\int_0^{r_0}\min\{t,\delta(a)\} da$ is not a polynomial of degree at
most two on any interval $(0,\varepsilon)$ --- explicitly, its inverse cut
function is $\delta^{-1}(t)=\bigl(\sqrt{m^2+4\gamma t}-m\bigr)/(2\gamma)$,
which is not affine. A nonzero curvature of either side destroys the exact homogeneity
$\delta(\lambda a)=\lambda\delta(a)$ of the fiber-cut function and --- in this
example --- with it polynomiality. This strongly suggests that one-homogeneity of
the fiber-cut function, rather than straightness of the incident arcs, is the
relevant invariant; it does not by itself exclude special configurations or
cancellations between several singular points, cf. Remark~\ref{rem:strict}.
\end{remark}

\subsection{A fractal link: the Cantor fan}\label{subsec:cantor}

Let $K\subset[0,\tfrac\pi2]$ be the ternary Cantor set, $D=\log2/\log3$, and
\[
S:=\{r e^{i\phi}: 0\le r\le 1, \phi\in K\}\subset\R^2 ,
\]
the cone over $K$ truncated at radius $1$. Then $S$ is exactly a cone near the
origin, so (C1) holds, but the link $K$ has infinitely many components with
arbitrarily small gaps --- the configuration that Theorem~\ref{thm:link}
forbids under (C2).

\begin{proposition}\label{prop:cantor}
The truncated Cantor fan has $\conreach(S)=0$: no conic certificate of
any radius exists. Moreover there are $0<c_1\le c_2$ and $t_0>0$ with
$c_1 t^{1-D}  \le  V_S(t)  \le  c_2 t^{1-D},$ $0<t\le t_0 ;$
in particular $V_S(t)/t\to\infty$, the outer Minkowski content of $S$ is
infinite, and $\polreach(S)=0$.
\end{proposition}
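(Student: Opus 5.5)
The proof has three parts: non-existence of a certificate, the two-sided volume bound, and the consequences.

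\emph{No certificate.} Suppose $S$ admits a conic certificate of radius $\rho>0$. I first show that the origin must lie in $\Sigma$. Consider a boundary ray through a gap endpoint $\omega$ of a small gap of width $\theta$ in $K$. At a point $a\omega$ with $a<1$, the fiber in the direction of the gap is cut by the gap's bisector at length $a\tan(\theta/2)$; this is the same computation as Lemma~\ref{lem:sidefiber}, done directly in $S$ near the origin. Gaps of width $3^{-n}\pi/2$ accumulate on every ray, so fibers of length $\to 0$ occur at feet arbitrarily close to any point $a\omega$, including $a\omega$ away from the origin. Condition (C2) therefore forces the whole segment $\{a\omega: 0<a<1\}$ to lie in the union of cores. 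Since the cores are finitely many balls, $0$ must belong to some $\bar B(\sigma,r_\sigma)$. The argument of Example~\ref{ex:Lshape} then forces $\sigma=0$: $\partial S$ contains non-collinear segments through $0$, and rays from $\sigma$ must stay in $\partial(\sigma+C_\sigma)$. Hence $C_\sigma$ is the cone over $K$. Theorem~\ref{thm:link} now says every gap of $K$ has width at least $2\arctan(\rho/r_\sigma)>0$, which contradicts the arbitrarily small Cantor gaps. So $\conreach(S)=0$.

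\emph{Volume bounds.} For the upper bound I would cover $S$ by $N(t)\asymp t^{-D}$ "sectors" and estimate each. At scale $t$, the link $K$ is covered by $2^n$ intervals of length $3^{-n}\pi/2\asymp t$, with $n\approx\log_3(1/t)$. The union of the corresponding closed sectors of radius $1$ contains $S$. Its $t$-neighbourhood, minus a disc of radius $Ct$ around the origin, has area at most $\sum_{\text{intervals}}\int_0^1 (\text{angular width}\cdot r+2t)\,dr\lesssim 2^n t \asymp t^{1-D}$. The disc contributes $O(t^2)$, and the endpoint caps contribute the same order, so $V_S(t)\le c_2t^{1-D}$. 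For the lower bound I would use the gaps at generation $n$ with $3^{-n}\asymp t$; there are $2^{n-1}$ of them. In each gap sector, the region $\{re^{i\phi}: 1/2\le r\le 1\}$ within angular distance $t$ of the gap's edges has area $\gtrsim t$, lies at distance $\le t$ from $S$, and lies outside $S$. These regions are disjoint across gaps, so summing gives $V_S(t)\ge c_1 2^n t\asymp t^{1-D}$. Note that $\lambda_2(S)=0$, since $K$ is null.

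\emph{Consequences and main difficulty.} Since $D<1$ and $t^{1-D}/t\to\infty$, the outer Minkowski content is infinite. Any polynomial $p$ with $p(0)=0$ has $p(t)/t$ bounded, so $\polreach(S)=0$. The main obstacle is the first part: placing $0$ in $\Sigma$ rigorously requires care with fibers near, but not at, the accumulation of gaps. A fallback is to apply Theorem~\ref{thm:link} directly to any singular core containing points of the fan and use the same accumulation argument locally on each ray.
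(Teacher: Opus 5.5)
Your proposal is correct. Both halves take routes that differ somewhat from the paper's.

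\textbf{Non-existence of a certificate.} The paper splits into two cases.
\begin{itemize}
\item Case~A: every $\sigma$ has $|\sigma|>2r_\sigma$. Then the circle of radius $r^\ast$ avoids every core, and a small gap violates (C2).
\item Case~B, with $\sigma\ne0$: this is excluded by a dilation trick. Dilating a fan point near $0$ by $\tfrac98$ about $\sigma$ gives a point of $S$ in a direction near $-\sigma/|\sigma|$, which lies outside the quarter-plane containing the fan.
\end{itemize}
You instead let the feet $a\omega'$ on a single gap-endpoint ray tend to $0$; their fibers $a\tan(g/2)$ also tend to $0$. This places $0$ in some closed core $\bar B(\sigma,r_\sigma)$. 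Two small corrections: the core is closed, not open, and you do not need gaps to accumulate along every ray. You then reuse the rigidity argument of Example~\ref{ex:Lshape} to get $\sigma=0$: $\partial(\sigma+C_\sigma)$ contains the full ray from $\sigma$ through each of its points, so a fan segment ending at $0$ and not aligned with $\sigma$ would sweep an open set into a boundary. The germ argument and Theorem~\ref{thm:link} then finish exactly as in the paper. Your version needs only two non-collinear segments at $0$. The paper's version uses the quarter-plane shape of the fan but needs no topological sweep. The ``main obstacle'' you flag is already handled by your own argument.

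\textbf{Volume bounds.} The paper sandwiches the polar sections $A_r$ of $S_t$ between $K_{t/r}$ and $K_{(\pi/2)t/r}$ and quotes $\lambda_1(K_\varepsilon)\asymp\varepsilon^{1-D}$. It treats the small disc, the slightly negative angles and the outer rim separately. You do the box counting by hand.
\begin{itemize}
\item Upper bound: you cover $S$ by $2^n$ thin convex sectors whose $t$-neighbourhoods each have area $O(t)$. Steiner's formula applied to each sector gives this directly, with no separate caps.
\item Lower bound: you use the $2^{n-1}$ generation-$n$ gaps, each contributing area $\gtrsim t$ in the annulus $\tfrac12\le|y|\le1$. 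This requires choosing $n$ so that the gap width lies in $[t,3t)$.
\end{itemize}
Your route is self-contained. The paper's route isolates the mechanism: tube sections are linear neighbourhoods of the link. It therefore transfers to any link whose $\varepsilon$-neighbourhoods obey a power law, and makes the connection with the Minkowski dimension explicit.
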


Proved in Appendix~\ref{app:pfrest}; the exponent agrees with the
Minkowski dimension $1+D$ of the fan.

\appendix

\section{Technical lemmas}\label{app:tech}

\subsection{The fiber length and the medial axis}\label{app:background}

\begin{lemma}
\label{lem:deltameas}
For every closed $S$, $x\in S$ and unit $u$,
$\delta_S(x,u)=\sup\{s>0:  \dist(x+su,S)=s\}$ $(\sup\emptyset:=0),$ and for every $0<s<\delta_S(x,u)$ the point $x+su$ has the unique foot $x$.
Moreover $\{(x,u):\delta_S(x,u)\ge s\}=\{(x,u):\dist(x+su,S)\ge s\}$ is closed for
each $s>0$, so $\delta_S$ is upper semicontinuous, hence Borel, on
$S\times\Sph^{1}$.
\end{lemma}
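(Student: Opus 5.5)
We prove the three assertions in order: the characterization of $\delta_S$, then uniqueness of the foot, then closedness of superlevel sets, which gives upper semicontinuity and measurability.

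\emph{Characterization.} Set $A:=\{s>0:\xi_S(x+su)=x\}$ and $B:=\{s>0:\dist(x+su,S)=s\}$. By definition $\delta_S(x,u)=\sup A$, and the interval property in Section~\ref{sec:back} shows $A$ is an initial interval, so $\sup A=\sup\{a:(0,a)\subseteq A\}$. Clearly $A\subseteq B$. Conversely, take $s\in B$ and $0<s'<s$. The triangle-inequality argument already given in Section~\ref{sec:back} shows that $x$ is the only point of $S$ in $\bar B(x+s'u,s')$. Since $x\in S$ gives $\dist(x+s'u,S)\le s'$, we get $\xi_S(x+s'u)=x$, so $s'\in A$. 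Hence $\sup A\ge\sup B$, and the two suprema agree, including the case where both sets are empty.

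\emph{Unique foot.} For $0<s<\delta_S(x,u)$ there is some $s''\in B$ with $s''>s$. Applying the step above with $s''$ in place of $s$ shows that $x$ is the unique foot of $x+su$.

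\emph{Closedness.} We claim that $\delta_S(x,u)\ge s$ if and only if $\dist(x+su,S)\ge s$.
\begin{itemize}
\item[($\Rightarrow$)] If $\delta_S(x,u)\ge s$, then $\dist(x+s'u,S)=s'$ for all $s'<s$. Since $\dist(\cdot,S)$ is continuous, letting $s'\uparrow s$ gives $\dist(x+su,S)=s$.
\item[($\Leftarrow$)] If $\dist(x+su,S)\ge s$, then $\dist(x+su,S)=s$ because $x\in S$. So $s\in B$ and $\delta_S(x,u)\ge s$.
\end{itemize}
The map $(x,u)\mapsto\dist(x+su,S)$ is continuous, since it is $1$-Lipschitz in $x+su$, which depends continuously on $(x,u)$. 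Therefore $\{(x,u)\in S\times\Sph^1:\dist(x+su,S)\ge s\}$ is closed in $S\times\Sph^1$.

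\emph{Upper semicontinuity.} Closed superlevel sets for every $s>0$ give upper semicontinuity: $\{\delta_S<c\}$ is relatively open for each $c>0$, and $\{\delta_S<c\}$ is empty for $c\le0$. An upper semicontinuous function is Borel.

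The only delicate point is the handling of endpoints. $B$ may include its supremum while $A$ need not, and the value $+\infty$ must also be covered. Comparing suprema, rather than the sets themselves, takes care of both.
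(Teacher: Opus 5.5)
Your proof is correct and uses essentially the paper's approach: the triangle-inequality comparison between $x+s'u$ and $x+su$ for $s'<s$, and continuity of $\dist(\cdot,S)$ to pass to the endpoint. The only difference is that you compare $\sup A$ with $\sup B$ where the paper shows $\{s:\dist(x+su,S)=s\}=(0,\delta^*]$ directly. One step deserves to be written out: for $s\in B$ the foot of $x+su$ need not be unique, so your claim that $x$ is the only point of $S$ in $\bar B(x+s'u,s')$ rests on the equality case of the triangle inequality ($z=x+\alpha u$ with $\alpha=0$), as in the paper's proof, and not on uniqueness of the foot at $x+su$.
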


\begin{proof}
The segment properties behind the first claim are classical,
cf.\ \cite[Thm.~4.8]{federer}; we include the short argument, in the form we
use. Write $\delta^*(x,u)$ for the supremum on the right. If $\dist(x+s_0u,S)=s_0$ and
$0<s<s_0$, then for every $z\in S$,
$|x+su-z|\ge|x+s_0u-z|-(s_0-s)\ge s$, so $\dist(x+su,S)=s$ (the value $s$ is
attained at $z=x$); if some $z\ne x$ also attained it, the two inequalities would
be equalities, forcing $z=x+\alpha u$ with $|s-\alpha|=s$ and $|s_0-\alpha|=s_0$,
i.e. $\alpha=0$, $z=x$. Thus $\{s:\dist(x+su,S)=s\}$ is an initial interval, on whose interior the
foot is unique and equal to $x$; and if $\delta^*<\infty$, taking
$s_n\uparrow\delta^*$ and using continuity of the distance function gives
$\dist(x+\delta^*u,S)=\lim_n s_n=\delta^*$, so the interval is exactly
$(0,\delta^*]$ (and it is $(0,\infty)$ when $\delta^*=\infty$). Consequently $\xi_S(x+su)=x$ for all $s<\delta^*$, giving
$\delta_S\ge\delta^*$; and $\xi_S(x+su)=x$ implies $\dist(x+su,S)=s$, giving
$\delta_S\le\delta^*$. The set equality in the last claim follows since
$\dist(x+su,S)\le s$ always (as $x\in S$), together with the interval structure;
closedness is continuity of $\dist$.
\end{proof}

\begin{lemma}\label{lem:federer}
(i) The set $\R^2\setminus\Unp(S)$ of points with non-unique projection is
Lebesgue-null. (ii) If $\reach(S,x)\ge\rho$ then $\delta_S(x,u)\ge\rho$ for
every $u\in\Nor(S,x)$.
\end{lemma}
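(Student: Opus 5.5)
Part (ii) comes directly from the definitions, and we do it first. Suppose $\reach(S,x)\ge\rho$ and $u\in\Nor(S,x)$, so $\delta_S(x,u)>0$. Then $B(x,r)\subset\Unp(S)$ for every $r<\rho$, and hence $B(x,\rho)\subset\Unp(S)$. Fix $0<s_0<\delta_S(x,u)$. By Lemma~\ref{lem:deltameas}, the set $I:=\{s>0:\dist(x+su,S)=s\}$ is an initial interval containing $s_0$, and $\xi_S(x+su)=x$ on its interior. Suppose $\delta^*:=\sup I<\rho$. Along the segment $s\mapsto x+su$, $s\in(0,\rho)$, every point lies in $\Unp(S)$. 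The metric projection $\xi_S$ is continuous on $\Unp(S)$, by the standard compactness argument: limits of feet are feet. Therefore $\xi_S(x+\delta^*u)=x$. We now show that $x+su$ still projects to $x$ for $s$ slightly larger than $\delta^*$, which contradicts maximality.

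The cleanest way to obtain this is Federer's argument. Put $y_s=x+su$ with $s\in(\delta^*,\rho)$ and let $x_s=\xi_S(y_s)$. By continuity, $x_s\to x$ as $s\downarrow\delta^*$. The key step is the standard fact from \cite[Thm.~4.8(6)]{federer}: if $y\in\Unp(S)$, $\xi_S(y)=x'$, and $B(x',r)\subset\Unp(S)$ for some $r$ exceeding $\dist(y,S)$, then $\xi_S(x'+\lambda(y-x'))=x'$ for every $\lambda\ge1$ with $\lambda\dist(y,S)<r$. We will invoke this rather than re-derive it; its proof is the continuity and connectedness argument sketched above. Apply it with $x'=x$, $y=x+s_0u$, and any $r<\rho$. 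This gives $\xi_S(x+su)=x$ for all $s<r$, so $\delta_S(x,u)\ge r$. Letting $r\uparrow\rho$ proves (ii).

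For (i), we use the a.e.-differentiability route mentioned in the text. The function $f(y)=|y|^2-\dist(y,S)^2=\sup_{z\in S}(2\langle y,z\rangle-|z|^2)$ is a supremum of affine functions, so it is convex on $\R^2$. By Rademacher's (or Alexandrov's) theorem, $f$ is differentiable at Lebesgue-a.e.\ $y$. We then show that differentiability at $y$ forces uniqueness of the foot. For each foot $z\in\feet_S(y)$, the affine function $2\langle\cdot,z\rangle-|z|^2$ touches $f$ from below at $y$, so $2z$ belongs to the subdifferential $\partial f(y)$. At a point of differentiability the subdifferential is the single point $\{\nabla f(y)\}$, so the foot is unique. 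Hence $\R^2\setminus\Unp(S)$ is contained in the non-differentiability set of a convex function, which is Lebesgue-null.

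The main obstacle is the extension step in (ii): showing that the fiber cannot stop before $\rho$ when every point near $x$ has a unique foot. If we do not want to cite Federer, the approach we would try first is a connectedness argument on $J=\{s\in(0,\rho):\xi_S(x+su)=x\}$. The set $J$ is nonempty and closed in $(0,\rho)$ by continuity of $\xi_S$. For openness, we would use the fact that $\dist(\cdot,S)$ is differentiable on the open set $\Unp(S)\setminus S$, with gradient $(y-\xi_S(y))/\dist(y,S)$, which is continuous there. Along the ray, the gradient at $x+su$ for $s\in J$ equals $u$. An ODE or continuity argument then keeps $\tfrac{d}{ds}\dist(x+su,S)=1$ slightly beyond $s$, which gives $\dist(x+su,S)=s$ and hence $s\in J$ beyond.
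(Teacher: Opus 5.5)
Your proposal is correct and follows the paper's proof. In (i), the convexity of $|y|^2-\dist(y,S)^2$ is exactly the semiconcavity of $\dist(\cdot,S)^2$ that the paper invokes, and your observation that $2z\in\partial f(y)$ for every foot $z$ makes explicit why differentiability forces a unique foot. In (ii), you appeal to the same Federer Theorem~4.8(6), choosing $s_0<r<\rho$.

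One remark on your optional self-contained fallback for the extension step. Continuity of $\nabla\dist(\cdot,S)$ alone does not give $\tfrac{d}{ds}\dist(x+su,S)=1$ beyond $s$, but the ODE version does. Take a Peano integral curve $\gamma$ of $\nabla\dist(\cdot,S)$ in $B(x,\rho)\setminus S$ starting at $x+su$. It satisfies $\dist(\gamma(t),S)=s+t$ and $|\gamma(t)-(x+su)|\le t$, and equality in the triangle inequality then forces $\gamma(t)=x+(s+t)u$.
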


\begin{proof}
(i) follows from a.e. differentiability of the locally semiconcave
function $\dist(\cdot,S)^2$ \cite{CS}, whose points of differentiability
have a unique foot. (ii) we quote as a standard fact, contained in \cite[Thm.~4.8]{federer}:
within the reach the metric projection is well defined and every normal
segment projects back to its foot, i.e. the local reach bounds the proximal
fiber lengths from below (equivalently,
$\reach(S)=\inf\{\delta_S(x,u):(x,u)\in N(S)\}$; for a modern account of
positive reach see also \cite{RZbook}); it is used here only for
Remark~\ref{rem:reachincl}.
\end{proof}

\subsection{Localization to the cone}\label{app:loclem}

\begin{lemma}\label{lem:local}
Let $\sigma\in\Sigma$ and $y\in\bar B(\sigma,2r_\sigma)$ with
$0<\dist(y,S)<\rho$. Then
$\dist(y,S)=\dist(y,\sigma+C_\sigma),$ $\feet_S(y)=\feet_{\sigma+C_\sigma}(y),$
and all feet lie in $B(\sigma,3r_\sigma)$.
\end{lemma}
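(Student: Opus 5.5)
The argument is a triangle-inequality localization: every foot, in $S$ or in the cone, is forced into $\bar B(\sigma,3r_\sigma)$, where (C1) identifies $S$ with $\sigma+C_\sigma$. Write $C:=\sigma+C_\sigma$ and $d:=\dist(y,S)<\rho\le r_\sigma$.

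First, the feet of $y$ in $S$. Since $S$ is compact, $\feet_S(y)\neq\emptyset$. Any $q\in\feet_S(y)$ satisfies
\[
|q-\sigma|\le|q-y|+|y-\sigma|\le d+2r_\sigma<\rho+2r_\sigma\le 3r_\sigma .
\]
So $q\in B(\sigma,3r_\sigma)$, and by (C1) $q\in C$. In particular $\dist(y,C)\le d$.

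Second, the feet of $y$ in $C$. The cone $C$ is closed and nonempty, and $\dist(y,C)\le d<\infty$, so $\feet_C(y)\neq\emptyset$. Any $p\in\feet_C(y)$ satisfies
\[
|p-\sigma|\le\dist(y,C)+2r_\sigma\le d+2r_\sigma<3r_\sigma .
\]
Hence $p\in C\cap\bar B(\sigma,3r_\sigma)\subseteq S$ by (C1), which gives $d\le|y-p|=\dist(y,C)$.

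Combining the two steps gives $\dist(y,S)=\dist(y,C)$. The feet then coincide. A foot in $S$ lies in $C$ at distance $d=\dist(y,C)$ from $y$, so it is a foot in $C$. Conversely, a foot in $C$ lies in $S$ at distance $\dist(y,C)=d$, so it is a foot in $S$. Both feet sets were shown to lie in the open ball $B(\sigma,3r_\sigma)$ by the strict inequalities above.

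The only delicate point is strictness: it uses $d<\rho\le r_\sigma$, with $r_\sigma\ge\rho$ built into Definition~\ref{def:conreach}. The hypothesis $d>0$ is not needed for the argument; it only matches the tube setting in which the lemma is applied.
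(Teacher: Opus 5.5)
Your proof is correct and follows the paper's argument essentially step for step. A triangle-inequality bound, using $\dist(y,S)<\rho\le r_\sigma$, places every foot (in $S$ or in $\sigma+C_\sigma$) inside $B(\sigma,3r_\sigma)$, and (C1) then transfers feet in both directions. Your extra remarks are accurate and make explicit what the paper leaves implicit: feet exist in the closed cone, and the hypothesis $\dist(y,S)>0$ is not actually used.
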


\begin{proof}
Any foot of $y$ in $S$ lies within
$|y-\sigma|+\dist(y,S)<2r_\sigma+\rho\le3r_\sigma$ of $\sigma$, hence in
$S\cap\bar B(\sigma,3r_\sigma)=(\sigma+C_\sigma)\cap\bar B(\sigma,3r_\sigma)$;
so $\dist(y,S)=\dist\bigl(y,S\cap\bar B(\sigma,3r_\sigma)\bigr)\ge
\dist(y,\sigma+C_\sigma)$. Conversely, a nearest point of the cone to $y$
lies within $|y-\sigma|+\dist(y,\sigma+C_\sigma)\le
2r_\sigma+\dist(y,S)<3r_\sigma$ of $\sigma$, hence belongs to $S$. The two
distances and feet sets therefore coincide.
\end{proof}

The localization is exact, so it transfers truncated fiber lengths in both
directions.

\begin{corollary}\label{cor:trunc}
Let $S$ admit a conic certificate of radius $\rho$, let $\sigma\in\Sigma$,
and let $z\in\partial S$ with $|z-\sigma|\le 2r_\sigma-\rho$ (in
particular, any $z$ with $|z-\sigma|=r_\sigma$ qualifies, as
$\rho\le r_\sigma$). Then, for every unit vector $u$,
$\delta_S(z,u)\wedge \rho = \delta_{\sigma+C_\sigma}(z,u)\wedge \rho .$
\end{corollary}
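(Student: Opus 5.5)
By Lemma~\ref{lem:deltameas}, for $s\in(0,\rho)$ the inequality $\delta_S(z,u)\ge s$ holds exactly when $\dist(z+su,S)=s$, and the same characterization holds for $\sigma+C_\sigma$. Both truncated quantities $\delta\wedge\rho$ are therefore determined by the function $s\mapsto\dist(z+su,\cdot)$ on $(0,\rho)$. The plan is to show that, for every $s\in(0,\rho)$,
\[
\dist(z+su,S)=s\iff\dist(z+su,\sigma+C_\sigma)=s .
\]
Taking suprema over $s<\rho$ then gives $\delta_S(z,u)\wedge\rho=\delta_{\sigma+C_\sigma}(z,u)\wedge\rho$. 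The case where one of the two sets $\{s\}$ is empty is included, because the supremum of the empty set is $0$.

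First note that $z\in\sigma+C_\sigma$. Indeed $|z-\sigma|\le 2r_\sigma-\rho<3r_\sigma$, and $z\in S$, so (C1) applies. Now put $y=z+su$ with $0<s<\rho$. Then
\[
|y-\sigma|\le|z-\sigma|+s<2r_\sigma-\rho+\rho=2r_\sigma,
\]
so $y\in\bar B(\sigma,2r_\sigma)$. If $\dist(y,S)>0$, Lemma~\ref{lem:local} applies directly, since $\dist(y,S)\le|y-z|=s<\rho$, and it gives $\dist(y,S)=\dist(y,\sigma+C_\sigma)$. If instead $\dist(y,S)=0$, then $y\in S\cap\bar B(\sigma,3r_\sigma)\subseteq\sigma+C_\sigma$, so both distances vanish. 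In every case the two distance functions agree at $y$, which proves the equivalence above for each such $s$.

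Lemma~\ref{lem:local} is stated only for $\dist(y,S)>0$, so the step that needs care is the case where $y$ lies in the set. The argument above handles it through (C1). The symmetric direction, where $\dist(y,\sigma+C_\sigma)$ is the quantity known in advance, needs no separate treatment: the two distances have been shown equal outright.

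Finally, the parenthetical claim: if $|z-\sigma|=r_\sigma$, then $r_\sigma\le 2r_\sigma-\rho$ because $\rho\le r_\sigma$, so such $z$ satisfy the hypothesis.
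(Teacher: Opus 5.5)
Your proof is correct and rests on the same two ingredients as the paper's: exact localization by Lemma~\ref{lem:local} along the fiber $z+su$, $0<s<\rho$, and the characterization of $\delta$ in Lemma~\ref{lem:deltameas}. The paper argues by contradiction in each direction and also transfers unique feet, whereas you show directly that $\dist(z+su,S)=\dist(z+su,\sigma+C_\sigma)$ on the whole segment (settling the case $z+su\in S$ via (C1)), which handles both directions at once and is a slightly cleaner packaging of the same argument.
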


\begin{proof}
Write $C:=\sigma+C_\sigma$; by (C1), $S$ and $C$ coincide on
$\bar B(\sigma,3r_\sigma)$, and $|z-\sigma|<3r_\sigma$, so
$z\in\partial C$ as well (equal sets in an open ball have equal boundaries
there). Suppose
$\delta_C(z,u)\wedge\rho<\delta_S(z,u)\wedge\rho$ and pick $s$ strictly
between the two values; then $s<\rho$ and $s<\delta_S(z,u)$, so
$\dist(z+su,S)=s$ with unique foot $z$ (Lemma~\ref{lem:deltameas}), and
$|z+su-\sigma|\le(2r_\sigma-\rho)+s<2r_\sigma$; Lemma~\ref{lem:local}
gives $\feet_C(z+su)=\feet_S(z+su)=\{z\}$ and $\dist(z+su,C)=s$, whence
$s\le\delta_C(z,u)$ by Lemma~\ref{lem:deltameas} --- contradicting
$s>\delta_C(z,u)\wedge\rho$. In the symmetric case
$\delta_S(z,u)\wedge\rho<\delta_C(z,u)\wedge\rho$, pick $s$ strictly
between the two values: then $\dist(z+su,C)=s$ with unique foot $z$;
moreover $z+su\in B(\sigma,2r_\sigma)\subset\bar B(\sigma,3r_\sigma)$,
where $S$ and $C$ coincide, and $\dist(z+su,C)=s>0$ gives $z+su\notin S$,
while $z\in S$ gives $0<\dist(z+su,S)\le s<\rho$;
Lemma~\ref{lem:local} applies again and yields $\dist(z+su,S)=s$ with foot
$z$, so $s\le\delta_S(z,u)$ --- a contradiction.
\end{proof}

\subsection{The side fibers of a planar cone}\label{app:sidelem}

\begin{lemma}\label{lem:sidefiber}
Let $C\subseteq\R^2$ be a cone whose link $K=C\cap\Sph^1$ is closed and
$\neq\Sph^1$, let $G$ be a connected component of $\Sph^1\setminus K$ of
angular width $\theta\in(0,2\pi]$, let $\omega_1$ be an endpoint of $G$, and
let $u\perp\omega_1$ point into $G$. Then $(\omega_1,u)\in N(C)$ and
\[
\tilde\delta(\omega_1,u) = 
\begin{cases}
\tan(\theta/2), & \theta<\pi,\\[2pt]
\infty, & \theta\ge\pi .
\end{cases}
\]
Moreover, the proximal unit normals of $C$ at nonzero points are exactly
indexed by the incidences $(G,\omega)$ of a gap $G$ and one of its two
endpoints $\omega$: for each incidence, the unique proximal direction at
the points $r\omega$, $r>0$, is the perpendicular to $\omega$ pointing into
$G$, with normalized fiber length as displayed; there are no other proximal
normals at nonzero points of $C$. (A one-point component of $K$ carries two
incidences, one for each side of its ray.)
\end{lemma}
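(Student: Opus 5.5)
By the homogeneity $\delta_C(r\omega,u)=r\,\tilde\delta(\omega,u)$ and $\Nor(C,r\omega)=\Nor(C,\omega)$, it suffices to work at unit points $\omega\in K$. The plan is to compute the distance to $C$ explicitly along normal rays, using the polar description $C=\{r\nu: r\ge0,\ \nu\in K\}$. The key elementary fact is this: for a point $y$ with direction $\phi(y)$,
\[
\dist(y,r\nu\text{-ray})=\begin{cases}|y|\sin d_{\Sph^1}(\phi(y),\nu),& d_{\Sph^1}(\phi(y),\nu)\le\pi/2,\\ |y|,&\text{otherwise.}\end{cases}
\]
Hence $\dist(y,C)=|y|\sin\bigl(\min\{d_{\Sph^1}(\phi(y),K),\pi/2\}\bigr)$. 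The feet of $y$ are the projections onto the rays through the nearest link directions, or the apex when $d_{\Sph^1}(\phi(y),K)\ge\pi/2$.

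First, the positive statement. Take $y=\omega_1+su$ with $u\perp\omega_1$ pointing into $G$. Then $|y|=\sqrt{1+s^2}$, and the direction of $y$ lies in $G$ at angle $\alpha=\arctan s$ from $\omega_1$. As long as $\alpha<\theta/2$ and $\alpha<\pi/2$, the nearest point of $K$ to $\phi(y)$ is $\omega_1$ alone, because $\phi(y)\in G$ and the only elements of $K$ at distance $<\theta-\alpha$ in the other direction would be $\omega_2$, which lies at distance $\theta-\alpha>\alpha$. Therefore $\dist(y,C)=\sqrt{1+s^2}\sin\alpha=s$, and the unique foot is $\omega_1$. Once $\alpha>\theta/2$ with $\theta<\pi$, the point $\omega_2$ becomes strictly closer, so $\dist(y,C)<s$. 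Lemma~\ref{lem:deltameas} then gives $\tilde\delta=\tan(\theta/2)$. When $\theta\ge\pi$, we have $\alpha<\pi/2\le\theta/2$ for every $s$, so the fiber is infinite. A small care point: when $\theta>\pi$, $\omega_2$ could also be reached through the other side, but any point of $K$ lies at angular distance $\ge\alpha$ from $\phi(y)$ with equality only at $\omega_1$, because $\phi(y)$ sits inside $G$. The same argument applies to a one-point component of $K$ and to the case $\theta=2\pi$.

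Second, the exhaustiveness, which is the main obstacle. Let $(r\omega,u)$ be proximal with $r>0$. Then $y=r\omega+su$ has unique foot $r\omega$ for small $s$. The foot of a point not on the apex fan is the orthogonal projection onto a ray, so $u\perp\omega$. Otherwise the foot would be the apex, or the projection of $y$ onto the ray through $\omega$ would be a point different from $r\omega$ yet at least as close; so the direction $u=\pm\omega^{\perp}$. Next, $\phi(y)$ must have $\omega$ as its nearest point of $K$, at angle $\arctan(s/r)\to0$. If $\omega$ were not an endpoint of a gap on the side of $u$, then $K$ would contain points $\nu$ arbitrarily close to $\omega$ on that side. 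Choosing $s$ with $\arctan(s/r)$ exceeding $d_{\Sph^1}(\omega,\nu)$ for such a $\nu$ is not quite right, since $s$ is fixed small. Instead, observe that for every small $s$ we need the open arc between $\omega$ and $\phi(y)$ to avoid $K$, because otherwise a point of $K$ is strictly closer. Letting this hold for all $s\in(0,\varepsilon)$ forces an arc $(\omega,\omega e^{\pm i\epsilon'})$ on the $u$-side disjoint from $K$. Since $K$ is closed, this arc lies in a gap $G$ with $\omega$ as an endpoint, and $u$ points into $G$. This yields the indexing by incidences. Each incidence gives exactly one direction, and a one-point component gives two, one on each side. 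Finally, $(\omega_1,u)\in N(C)$ because $\tilde\delta>0$ and $\omega_1\in\partial C$.
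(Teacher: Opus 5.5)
Your proof is correct and follows essentially the same route as the paper. Both compute the distances from $\omega+su$ to the rays of $C$ in terms of the angle $\alpha=\arctan s$, and both use the competing endpoint $\omega_2$ for the upper bound when $\theta<\pi$. For exhaustiveness, both show that $u$ is perpendicular to $\omega$ and then use the dichotomy between ``gap adjacent to $\omega$ on the side of $u$'' and ``$K$ accumulates at $\omega$ from that side''.

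The differences are cosmetic. Your single formula $\dist(y,C)=|y|\sin\bigl(\min\{d_{\Sph^1}(\phi(y),K),\pi/2\}\bigr)$ also covers $\theta\ge\pi$, where the paper uses a supporting half-plane instead. In the accumulation case, one fixed small $s$ already gives a point of $K$ strictly between $\omega$ and $\phi(y)$, so your detour through ``all $s\in(0,\varepsilon)$'' is unnecessary.
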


\begin{proof}
Rotate so that $\omega_1=(1,0)$, $u=(0,1)$, and $G=\{(\cos\phi,\sin\phi):
0<\phi<\theta\}$; then $K\subseteq\{(\cos\phi,\sin\phi):\theta\le\phi\le
2\pi\}$. Fix $s>0$, put $p:=\omega_1+su=(1,s)$ and $\alpha:=\arctan s\in
(0,\tfrac\pi2)$, so $|p|=\sqrt{1+s^2}$ and $s=|p|\sin\alpha$. For a ray
$\R_{\ge0}\omega'$ with $\omega'$ at angle $\phi$, write $\psi:=\phi-\alpha$;
then $\langle p,\omega'\rangle=|p|\cos\psi$ and
\[
\dist\bigl(p,\R_{\ge0}\omega'\bigr)=
\begin{cases}
|p| |\sin\psi|, & \cos\psi>0,\\
|p|, & \cos\psi\le0 .
\end{cases}
\]

Case $\theta\ge\pi$. Every $\omega'\in K$ has angle $\phi\in[\theta,2\pi]
\subseteq[\pi,2\pi]$, hence $\langle w-\omega_1,u\rangle=r\sin\phi\le0$ for
every $w=r\omega'\in C$: the set $C$ lies in the closed half-plane
$\{w:\langle w-\omega_1,u\rangle\le0\}$, so
$|p-w|^2=s^2-2s\langle u,w-\omega_1\rangle+|w-\omega_1|^2\ge
s^2+|w-\omega_1|^2$ for all $w\in C$, with strict inequality unless
$w=\omega_1$. Thus $\dist(p,C)=s$ with unique foot $\omega_1$ for every
$s>0$: $\tilde\delta(\omega_1,u)=\infty$.

Case $\theta<\pi$, lower bound. Let $0<s<\tan(\theta/2)$, i.e.
$\alpha<\theta/2$. For $\omega'\in K$ at angle $\phi\in[\theta,2\pi)$,
$\psi=\phi-\alpha\in[\theta-\alpha,2\pi-\alpha)$. If $\cos\psi\le0$ the
distance to that ray is $|p|>s$. If $\cos\psi>0$ then either
$\psi\in[\theta-\alpha,\tfrac\pi2)$, and
$|\sin\psi|=\sin\psi\ge\sin(\theta-\alpha)>\sin\alpha$ because
$\alpha<\theta-\alpha\le\tfrac\pi2$; or $\psi\in(\tfrac{3\pi}2,2\pi-\alpha)$,
and $|\sin\psi|=\sin(2\pi-\psi)>\sin\alpha$ because
$2\pi-\psi\in(\alpha,\tfrac\pi2)$. In every case
$\dist(p,\R_{\ge0}\omega')>|p|\sin\alpha=s$, while the ray through
$\omega_1$ itself gives $\dist(p,\R_{\ge0}\omega_1)=s$, attained only at
$\omega_1$. Hence $\dist(p,C)=s$ with unique foot $\omega_1$, and
$\tilde\delta(\omega_1,u)\ge\tan(\theta/2)$.

Case $\theta<\pi$, upper bound. Let $s>\tan(\theta/2)$, i.e.
$\alpha>\theta/2$, and let $\omega_2$ be the other endpoint of $G$, at angle
$\theta$. Then $\psi=\theta-\alpha\in(-\alpha,\theta/2)$ satisfies
$|\psi|<\alpha<\tfrac\pi2$, so $\cos\psi>0$ and
$\dist(p,\R_{\ge0}\omega_2)=|p| |\sin(\theta-\alpha)|<|p|\sin\alpha=s$.
Hence $\dist(p,C)<s$ for every $s>\tan(\theta/2)$, and
$\tilde\delta(\omega_1,u)\le\tan(\theta/2)$.

Finally we verify the classification, for an arbitrary closed $K$. Let
$\omega\in K$ and let $u=a\omega+b\omega^{\perp}$ be a unit vector,
$a^2+b^2=1$. If $a\ne0$, then for small $s>0$ the point
$(1+sa)\omega\in C$ satisfies $|\omega+su-(1+sa)\omega|=s|b|<s$, so
$\omega$ is not a foot of $\omega+su$ and $u$ is not proximal. If $a=0$
then $u=\pm\omega^{\perp}$ points to one of the two sides of $\omega$, and
either that side carries a gap adjacent to $\omega$, or $K$ accumulates at
$\omega$ from that side. In the first case $u$ is the proximal normal of
the corresponding incidence, computed above. In the second case $u$ is not
proximal: rotate so that $\omega=(1,0)$ and $u=(0,1)$, fix $s>0$, and put
$p:=\omega+su=(1,s)$, whose polar angle is $\alpha=\arctan s$ and whose
norm satisfies $|p|\sin\alpha=s$. By accumulation there is
$\omega_\phi\in K$ at angle $\phi$ with $0<\phi<2\alpha$, and the distance
from $p$ to the ray $\R_{\ge0} \omega_\phi\subseteq C$ is
$|p|\sin|\alpha-\phi|<|p|\sin\alpha=s=|p-\omega|$: so $\omega$ is not a
foot of $p$, for any $s>0$. (This covers in particular points in the
relative interior of a nondegenerate arc, where both sides accumulate;
consistently, such points $r\omega$ are interior to $C$ and
$\delta_C\equiv0$ there.)
\end{proof}

\subsection{Two lemmas for the cusp}\label{app:cusplem}

The following two lemmas concern the cuspidal set of Section~\ref{subsec:cusp}.
Write $G:=\{(x,x^2):0\le x\le1\}$ for the parabolic wall.

\begin{lemma}\label{lem:cuspcompare}
For $0<a\le1$ and $0\le y\le a^2$,
\[
\frac{a^2-y}{\sqrt{1+16a^2}} \le \dist\bigl((a,y),G\bigr) \le a^2-y .
\]
\end{lemma}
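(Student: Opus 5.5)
The upper bound is immediate: the point $(a,a^2)$ lies on $G$ directly above $(a,y)$, so $\dist((a,y),G)\le a^2-y$. The lower bound amounts to showing that the parabola cannot come closer than a fixed fraction of the vertical gap. I would prove it by a one-variable estimate on $f(x):=x^2$ over $[0,1]$. Fix $p=(a,y)$ with $0\le y\le a^2$ and write $h:=a^2-y\ge0$; we may assume $h>0$. For any $x\in[0,1]$ the squared distance is $(x-a)^2+(x^2-y)^2$. The key identity is
\[
x^2-y = h + (x^2-a^2) = h+(x-a)(x+a),
\]
so with $d:=x-a$ we have $x^2-y=h+d(x+a)$ and $|x+a|\le 2$.

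The main step is a lower bound for $g(d):=d^2+(h+\lambda d)^2$, where $\lambda:=x+a\in[0,2]$ depends on $x$. For fixed $\lambda$, minimizing over $d\in\R$ gives $\min_d g = h^2/(1+\lambda^2)$, since the distance from $(0,-h)$ to the line $\{(d,\lambda d)\}$ is $h/\sqrt{1+\lambda^2}$. Hence $|p-(x,x^2)|\ge h/\sqrt{1+(x+a)^2}$ for every $x$. This is already sharper than needed when $x+a\le 4a$. The remaining task is the region where $x$ is large compared with $a$, so that $\lambda$ is not controlled by $a$.

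I would split into two cases. If $x\le 3a$, then $\lambda=x+a\le 4a$, and the bound gives $|p-(x,x^2)|\ge h/\sqrt{1+16a^2}$, which is the claim. If $x>3a$, argue directly: $x^2-y\ge x^2-a^2>8a^2\ge h$. Thus the vertical coordinate difference alone exceeds $h\ge h/\sqrt{1+16a^2}$. Together the two cases cover all of $G$, and taking the infimum over $x\in[0,1]$ (a compact set) gives the lower bound.

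The only delicate point is the choice of threshold so that the constant comes out as exactly $16a^2$. Any cutoff $x\le ca$ with $(c+1)^2\le16$ and $c^2-1\ge1$ works, and $c=3$ satisfies both. The endpoint $a\le1$ ensures $x\in[0,1]$ poses no issue, and the points $x<a$ fall in the first case. I expect no real obstacle beyond arranging this case split.
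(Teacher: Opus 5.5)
Your argument is correct and is essentially the paper's: both write $a^2-y=(a^2-x^2)+(x^2-y)$ and apply Cauchy--Schwarz with a vector of the form $(L,1)$, $L\le 4a$ (your exact minimization of $d^2+(h+\lambda d)^2$ is that same inequality with $L=x+a$). The only difference is how the far part of $G$ is discarded: the paper localizes the minimizer in $[0,2a]$ via $|x^\ast-a|\le\dist\le a^2\le a$, which uses $a\le1$, whereas you bound every point of $G$ and dismiss $x>3a$ by the vertical gap $x^2-y>8a^2\ge h$, so your lower bound needs neither a minimizer nor $a\le1$.
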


\begin{proof}
The upper bound is the vertical competitor $(a,a^2)$. For the lower bound, any
minimizer $(x^\ast,x^{\ast2})$ satisfies $|x^\ast-a|\le\dist\le a^2\le a$, so
$x^\ast\in[0,2a]$, where $x\mapsto x^2$ is $4a$-Lipschitz; hence
\[
a^2-y\le|a^2-x^{\ast2}|+|x^{\ast2}-y|
\le 4a |a-x^\ast|+|x^{\ast2}-y|
\le\sqrt{1+16a^2} \bigl|(a,y)-(x^\ast,x^{\ast2})\bigr|
\]
by the Cauchy--Schwarz inequality applied to the vector $(4a,1)$.
\end{proof}

\begin{lemma}\label{lem:cuspdepth}
Let $0<t\le\tfrac1{100}$, $3\sqrt t\le a\le1-t$, and let $p=(a,y)$ lie in the
gap with $h:=\dist(p,G)\le t$. Then
$|(a^2-y) - h\sqrt{1+4a^2}| \le  5 h^2 .$
\end{lemma}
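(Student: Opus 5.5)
The approach is to parametrize $p$ through its nearest point on the parabola. Once the foot is known to be an interior point of $G$, the normal equation gives $a^2-y$ in closed form as a function of that foot, and the estimate reduces to two elementary Lipschitz bounds.

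\emph{Step 1: the foot is interior.} If $h=0$ then $p\in G$, which forces $y=a^2$, and both sides vanish. So assume $h>0$ and let $q=(x^\ast,x^{\ast2})$ be a nearest point of $G$ to $p$. Then $|x^\ast-a|\le|p-q|=h\le t$.
\begin{itemize}
\item Lower end: $x^\ast\ge a-t\ge3\sqrt t-t>0$.
\item Upper end: suppose $x^\ast=1$. Then $h\ge|1-a|\ge t\ge h$, so all these are equalities. In particular $|p-q|$ equals its horizontal component, so $y=1$. But $y=1>a^2$ since $a\le1-t<1$, contradicting $p$ being in the gap.
\end{itemize}
Hence $x^\ast\in(0,1)$, and the first-order condition for minimizing $x\mapsto|p-(x,x^2)|^2$ says that $p-q$ is orthogonal to the tangent $(1,2x^\ast)$. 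Write $w:=\sqrt{1+4x^{\ast2}}$. Then $p=q\pm h(2x^\ast,-1)/w$.

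The sign is $+$. The point $q-h(2x^\ast,-1)/w$ is obtained by moving from $q$ along the upward unit normal $(-2x^\ast,1)/w$. Since $x\mapsto x^2$ is convex, that normal points into the epigraph, so this point lies strictly above the parabola. The point $p$ lies strictly below it ($y<a^2$), so it must be the other choice.

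\emph{Step 2: exact identity.} With the $+$ sign we have
\[
a=x^\ast\Bigl(1+\frac{2h}{w}\Bigr),\qquad y=x^{\ast2}-\frac hw .
\]
Substituting these into $a^2-y$ gives
\[
a^2-y=x^{\ast2}\Bigl(\frac{4h}{w}+\frac{4h^2}{w^2}\Bigr)+\frac hw
= h w+\frac{4x^{\ast2}h^2}{w^2}.
\]
Since $4x^{\ast2}\le w^2$, the last term lies in $[0,h^2]$. Thus $a^2-y=h\,w(x^\ast)+O(h^2)$, with an exact constant in the error.

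\emph{Step 3: from $x^\ast$ to $a$.} Consider $w(x)=\sqrt{1+4x^2}$. Its derivative is $4x/w(x)$, which lies in $[0,2]$, so $w$ is $2$-Lipschitz. Together with $|a-x^\ast|\le h$ this gives
\[
h\,\bigl|w(x^\ast)-\sqrt{1+4a^2}\bigr|\le 2h^2 .
\]
Combining with Step 2,
\[
\bigl|(a^2-y)-h\sqrt{1+4a^2}\bigr|\le 2h^2+h^2=3h^2\le5h^2 .
\]

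The main obstacle is Step 1: ruling out a boundary foot, at $x^\ast=0$ or $x^\ast=1$, where the normal equation fails. This is exactly where the hypotheses $3\sqrt t\le a\le 1-t$ and $h\le t$ are used. The identity in Step 2 is routine algebra once the sign of the normal is fixed by convexity.
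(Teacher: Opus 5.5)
Your proof is correct and follows the paper's argument step by step: interior foot, normal equation, the identity $a^2-y=hw+4x^{\ast2}h^2/w^2$, and the $2$-Lipschitz bound for $\sqrt{1+4x^2}$. The only difference is that you exclude $x^\ast=1$ through the equality case $h\ge 1-a\ge t\ge h$ (which forces $y=1$), whereas the paper uses $\varphi'(1)=2(1-a)+4(1-y)>0$.

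One step needs a small repair: convexity alone does not put $q+h(-2x^\ast,1)/w$ above the parabola for every $h$, because that normal ray leaves the epigraph at distance $w^3/(4x^{\ast2})$. Here the smallness of $h$ settles it: the $-$ sign would give $a^2-y=-hw+4x^{\ast2}h^2/w^2\le h(h-w)<0$, since $h\le t<1\le w$.
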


\begin{proof}
A minimizer $x^\ast$ of $\varphi(x)=(a-x)^2+(y-x^2)^2$ on $[0,1]$ satisfies
$|x^\ast-a|\le h\le t\le a/3$, so $x^\ast\ge2a/3>0$; and $x^\ast<1$ because
$\varphi'(1)=2(1-a)+4(1-y)>0$. Hence $x^\ast$ is interior and
$\varphi'(x^\ast)=0$, i.e. $p-q\perp$ the tangent at $q=(x^\ast,x^{\ast2})$. Since
$p$ lies below the graph, $p=q+h (2x^\ast,-1)/\sqrt{Q}$ with $Q:=1+4x^{\ast2}$.
Substituting,
\[
a=x^\ast+\frac{2x^\ast h}{\sqrt Q},\qquad y=x^{\ast2}-\frac{h}{\sqrt Q},
\qquad\text{whence}\qquad
a^2-y=h\sqrt Q+\frac{4x^{\ast2}h^2}{Q}.
\]
Finally $|\sqrt Q-\sqrt{1+4a^2} |\le 2|x^\ast-a|\le2h$ (the map
$x\mapsto\sqrt{1+4x^2}$ is $2$-Lipschitz), and $4x^{\ast2}/Q\le1$, giving the
claim with constant $1+2\le5$ margin to spare.
\end{proof}

\section{Proofs of the main results}\label{app:proofs}

\subsection{Proof of Theorem~\ref{thm:link}}\label{app:pflink}

\begin{proof}
If $K_\sigma=\Sph^1$ then $C_\sigma=\R^2$ and, by (C1),
$\bar B(\sigma,3r_\sigma)\subseteq S$, contradicting
$\sigma\in\partial S$. If $K_\sigma=\emptyset$ the statement holds
trivially under our convention: there are no gaps and zero arcs. Assume
from now on $\emptyset\ne K_\sigma\ne\Sph^1$, and let $G$ be a gap of width
$\theta<\pi$ (gaps of width $\ge\pi$ already exceed $\theta_\sigma$, since
$\rho\le r_\sigma$ gives $\theta_\sigma\le\pi/2$), let $\omega_1$ be an endpoint of $G$ and
$u\perp\omega_1$ the direction into $G$. Put
$z:=\sigma+r_\sigma \omega_1$, a boundary point of the cone in the
interior of the conicity ball, hence $z\in\partial S$ by (C1). The point
$z$ lies outside every core: $|z-\sigma|=r_\sigma$, so
$z\notin B(\sigma,r_\sigma)$, and for every other singular point,
$|z-\sigma'|\ge|\sigma-\sigma'|-r_\sigma>3(r_\sigma+r_{\sigma'})-r_\sigma
>r_{\sigma'}$. Thus (C2) applies at $z$. By Lemma~\ref{lem:sidefiber} and
homogeneity, $\delta_{\sigma+C_\sigma}(z,u)=r_\sigma\tan(\theta/2)$, which
is positive, so $(z,u)\in N(\sigma+C_\sigma)$; by
Corollary~\ref{cor:trunc} the pair $(z,u)$ is also proximal for $S$, and
(C2) forces $\delta_S(z,u)\wedge\rho=\rho$, whence
$\rho = \delta_S(z,u)\wedge \rho = 
\bigl(r_\sigma\tan\tfrac\theta2\bigr)\wedge \rho ,$
i.e. $r_\sigma\tan(\theta/2)\ge\rho$ and $\theta\ge\theta_\sigma$. The
complement of $K_\sigma$ is a disjoint union of open arcs, each of width
$\ge\theta_\sigma$, so there are at most $\lfloor2\pi/\theta_\sigma\rfloor$
of them; $K_\sigma$ is the complement of their union, a finite union of
pairwise disjoint closed arcs.
\end{proof}

\subsection{Proof of Theorem~\ref{thm:main}}\label{app:pfmain}

The regular zone and the conical zones are treated separately and then
summed; the regular contribution is a direct consequence of the local
Steiner formula.

\begin{proposition}\label{prop:regular}
Let $S$ admit a conic certificate of radius $\rho$ and let
$\eta=N(S)\cap(\Gamma_{\mathrm{reg}}\times\Sph^1)$ as in
Theorem~\ref{thm:main}. Then $\eta$ is $r$-bounded, both $\Theta_j(\eta)$
are finite, and
\[
\vol\bigl(W_t(\eta)\bigr)=\Theta_1(\eta) t+\tfrac12 \Theta_0(\eta) t^2,
\qquad 0<t<\rho .
\]
\end{proposition}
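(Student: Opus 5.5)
The plan is to apply the local Steiner formula, Theorem~\ref{thm:HLW}, directly to $\eta$. Once the hypotheses are checked, the truncation $\min\{t,\delta_S\}$ becomes just $t$ on $\eta$ for every $t<\rho$. First we verify that $\eta$ is Borel and $r$-bounded. The set $\Gamma_{\mathrm{reg}}=\partial S\setminus\bigcup_\sigma B(\sigma,r_\sigma)$ is compact, since it is a closed subset of the compact set $\partial S$, so its feet lie in a compact set. Measurability comes from writing $N(S)=\{(x,u):x\in\partial S,\ \delta_S(x,u)>0\}$. This is a Borel set because $\delta_S$ is upper semicontinuous (Lemma~\ref{lem:deltameas}). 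Intersecting it with the closed set $\Gamma_{\mathrm{reg}}\times\Sph^1$ keeps it Borel. Condition (C2) gives $\delta_S\ge\rho$ at every $(z,u)\in\eta$, because such $z$ lies outside every core. Hence $\eta$ is $r$-bounded, and by Theorem~\ref{thm:HLW} the measures $\Theta_0,\Theta_1$ have finite total variation on $\eta$. In particular $\Theta_j(\eta)$ are finite real numbers.

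Second, fix $0<t<\rho$. On $\eta$ we have $\delta_S\ge\rho>t$, so $\min\{t,\delta_S\}=t$ identically on $\eta$. Substituting into \eqref{eq:HLW} gives
\[
\vol\bigl(W_t(\eta)\bigr)=\int_\eta t\,d\Theta_1+\tfrac12\int_\eta t^2\,d\Theta_0
= \Theta_1(\eta)\,t+\tfrac12\,\Theta_0(\eta)\,t^2 .
\]
Both integrals are of constants against finite signed measures, so no interpretation of the "weighted integral against the variations" clause is needed.

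The only delicate point, and the one I expect to require care, is making sure Theorem~\ref{thm:HLW} applies in exactly the stated form. Its two requirements are that the feet of $\eta$ lie in a compact set and that the finiteness statement covers $\eta$. Both follow from $r$-boundedness as above. One should also note that $W_t(\eta)$ is defined through $\Unp(S)$, matching the left-hand side of \eqref{eq:HLW}. The degenerate points need no separate treatment: a point $z\in\Gamma_{\mathrm{reg}}$ with a proximal normal $u$ is included in $\eta$ regardless of where $z$ sits, and normals with $\delta_S=0$ are excluded by the definition of $N(S)$. This matches the convention that interior points contribute nothing.
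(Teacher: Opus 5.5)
Your proposal is correct and follows the paper's own proof: $r$-boundedness of $\eta$ comes from the compactness of $\partial S$ together with (C2), and for $t<\rho$ the truncation $\min\{t,\delta_S\}$ collapses to $t$, so \eqref{eq:HLW} gives the stated polynomial. Your explicit check that $\eta$ is Borel, using upper semicontinuity of $\delta_S$, is a small detail the paper leaves implicit.
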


\begin{proof}
The feet of $\eta$ lie in the compact set $\partial S$ and, by (C2) ---
which applies exactly on
$\Gamma_{\mathrm{reg}}=\partial S\setminus\bigcup_\sigma
B(\sigma,r_\sigma)$ --- we have $\delta_S\ge\rho$ on $\eta$: so $\eta$ is
$r$-bounded, so the $\Theta_j$ are finite signed measures on it
(Theorem~\ref{thm:HLW}). For $0<t<\rho$ we have
$\min\{t,\delta_S\}=t$ identically on $\eta$, and \eqref{eq:HLW} reads as
displayed.
\end{proof}

\begin{proof}[Proof of Proposition~\ref{prop:cone}]
We partition the defining set by the location of the foot. If
$K=\emptyset$ then $C=\{0\}$ and the set is the punctured disc
$\bar B(0,t)\setminus\{0\}$, of area $\pi t^2$, matching $m=0$,
$\gamma_C=\pi$. Assume $K\ne\emptyset$.

Feet at the vertex. For a unit $u$, we claim $\delta_C(0,u)=\infty$ if
$d_{\Sph^1}(u,K)\ge\pi/2$ and $=0$ otherwise. If
$\langle u,\omega\rangle\le0$ for every $\omega\in K$ --- which is exactly
the angular-distance condition --- then for $w=r\omega\in C$,
$|su-w|^2=s^2-2sr\langle u,\omega\rangle+r^2\ge s^2+r^2$, so
$\dist(su,C)=s$ with unique foot $0$ for every $s>0$. If instead
$\langle u,\omega_0\rangle>0$ for some $\omega_0\in K$, then
$|su-r\omega_0|<s$ for small $r>0$, so $\dist(su,C)<s$ for every $s>0$ and
$0$ is never the foot. The vertex-footed part of the tube is therefore the
union of the full segments $\{su:0<s\le t\}$ over the fan
$F:=\{u:d_{\Sph^1}(u,K)\ge\pi/2\}$, of area
$\tfrac12 t^2 \mathcal H^1(F)=\gamma_C t^2$ (polar coordinates).

Feet on the rays. Let $y\in\Unp(C)$ with $0<\dist(y,C)\le t$ and
$\xi_C(y)=r\omega$, $0<r<r_0$, $\omega\in K$ (by the classification of
Lemma~\ref{lem:sidefiber}, the tube points whose foot lies on the circle
$|\xi_C(y)|=r_0$ form a finite union of straight normal segments
$\{r_0\omega+su:0<s\le t\}$, one per incidence, of measure zero, and are
discarded). Since $r$ is an
interior minimum of $\rho\mapsto|y-\rho\omega|^2$ over $\rho>0$, we get
$\langle y-r\omega,\omega\rangle=0$: writing $s:=\dist(y,C)$ and
$u:=(y-r\omega)/s$, the direction $u$ is one of $\pm\omega^\perp$ and is
proximal at $\omega$, so by Lemma~\ref{lem:sidefiber} $\omega$ is an
endpoint of some gap $G_i$ and $u$ points into $G_i$, with
$0<s<r\Delta(\theta_i)$; for $\theta_i<\pi$ this holds up to the cut set
$\{s=r\Delta(\theta_i)\}$ (a line through the origin in the
$(r,s)$-coordinates, of measure zero; on it the foot need not be unique),
while for $\theta_i\ge\pi$ the fiber is unbounded and there is no cut set. Conversely, the lower-bound case of
Lemma~\ref{lem:sidefiber}, scaled by $r$, shows that every point
$r\omega+su$ with $\omega,u$ as above, $0<r<r_0$ and
$0<s<\min\{t, r\Delta(\theta_i)\}$ belongs to the set, with unique foot
$r\omega$. Each side
--- each of the $2m$ pairs (gap, endpoint) --- is thus parametrized by the
map $(r,s)\mapsto r\omega+su$, a Euclidean isometry of the plane onto
itself in the orthonormal frame $(\omega,u)$, so its contribution has area
\[
\int_0^{r_0}\min\bigl\{t, r\Delta(\theta_i)\bigr\} dr
 = r_0 t-\frac{t^2}{2\Delta(\theta_i)},
\qquad t\le r_0\Delta(\theta_i) ,
\]
with the convention $1/\infty:=0$: for $\Delta(\theta_i)=+\infty$ the
integrand is identically $t$ and the integral is $r_0t$, while for finite
$\tilde\delta:=\Delta(\theta_i)$ and $t\le r_0\tilde\delta$,
$\int_0^{t/\tilde\delta}r\tilde\delta dr+\int_{t/\tilde\delta}^{r_0}t dr
=\tfrac{t^2}{2\tilde\delta}+t\bigl(r_0-\tfrac t{\tilde\delta}\bigr)
=r_0t-\tfrac{t^2}{2\tilde\delta}$. Distinct sides parametrize disjoint sets
up to the Lebesgue-null set of points with more than one foot
(Lemma~\ref{lem:federer}(i)), and every point of the defining set is
covered. Summing the vertex fan and the $2m$ sides --- each gap $G_i$
contributes its two endpoints, hence twice
$r_0t-\tfrac12\cot(\theta_i/2)t^2$ when $\theta_i<\pi$ and twice $r_0t$ when
$\theta_i\ge\pi$ --- yields the formula.
\end{proof}

It remains to assemble the two zones.

\begin{proof}
Fix $0<t<\rho$ and recall $W_t=\{y:0<\dist(y,S)\le t\}$, so
$V_S(t)=\vol(W_t)$. Set $\Gamma_{\mathrm{sing}}:=\partial S\cap
\bigcup_{\sigma\in\Sigma}B(\sigma,r_\sigma)$; the separation of the
certificate makes these balls pairwise disjoint, and
$\partial S=\Gamma_{\mathrm{reg}}\sqcup\Gamma_{\mathrm{sing}}$. Up to the
Lebesgue-null set of points with more than one foot
(Lemma~\ref{lem:federer}(i)), $W_t$ is partitioned by the location of the
unique foot:
\[
V_S(t)=\vol\bigl(W_t(\eta)\bigr)
+\sum_{\sigma\in\Sigma}\vol\bigl(W_t(\partial S\cap B(\sigma,r_\sigma))\bigr),
\]
with $\eta$ as in the statement (a foot $z\in\Gamma_{\mathrm{reg}}$ with the
direction $u$ towards $y$ is a pair of $\eta$).

We claim that, for each $\sigma$, the singular term equals
$T_{C_\sigma,r_\sigma}(t)$, computed for the pure cone. Translate $\sigma$
to the origin, keep writing $S$ for $S-\sigma$, and put $C:=C_\sigma$,
$r:=r_\sigma$; by (C1), $S$ and $C$ coincide on $\bar B(0,3r)$. If
$y\in W_t(\partial S\cap B(0,r))$ then $0<\dist(y,S)\le t<\rho$ and
$|y|<r+t<2r$, so Lemma~\ref{lem:local} gives $\dist(y,C)=\dist(y,S)$ and
$\feet_C(y)=\feet_S(y)=\{\xi_S(y)\}$: thus $y\in\Unp(C)$,
$0<\dist(y,C)\le t$ and $\xi_C(y)\in B(0,r)$. Conversely, let
$y\in\Unp(C)$ with $0<\dist(y,C)\le t$ and $\xi_C(y)\in B(0,r)$; then
$\xi_C(y)\in C\cap\bar B(0,3r)=S\cap\bar B(0,3r)$, so
$\dist(y,S)\le\dist(y,C)\le t<\rho$, and $|y|<r+t<2r$; moreover
$y\notin C$ and $S=C$ on $\bar B(0,3r)\ni y$, so $y\notin S$ and
$\dist(y,S)>0$; Lemma~\ref{lem:local} applies and gives
$\dist(y,S)=\dist(y,C)$ and $\feet_S(y)=\feet_C(y)$, whence
$y\in W_t(\partial S\cap B(0,r))$. The two sets coincide; and since the
tube points whose cone foot lies exactly on the circle $|\xi_C(y)|=r$ form
a Lebesgue-null set (they are discarded in the proof of
Proposition~\ref{prop:cone}), the volume over the open foot-ball equals
the volume over the closed one: the singular term is $T_{C,r_\sigma}(t)$.

It remains to apply Proposition~\ref{prop:regular} (valid since $t<\rho$)
and, at each $\sigma$, Proposition~\ref{prop:cone} with $r_0=r_\sigma$: by
Theorem~\ref{thm:link}, every gap width $\theta_i<\pi$ of $K_\sigma$ has
$\tan(\theta_i/2)\ge\rho/r_\sigma$, so
$r_0\Delta_{\min}\ge r_\sigma\cdot\tfrac{\rho}{r_\sigma}=\rho>t$
and the cone formula is in force. Substituting,
\[
V_S(t)=\Theta_1(\eta) t+\tfrac12\Theta_0(\eta) t^2
+\sum_{\sigma\in\Sigma}\Bigl[2m_\sigma r_\sigma t
+\Bigl(\gamma_\sigma-\sum_{\theta_i<\pi}\cot\tfrac{\theta_i}2\Bigr)t^2\Bigr],
\]
which is \eqref{eq:mainformula}. Finally, for every certificate of radius
$\rho$, $V_S$ agrees on $(0,\rho)$ with a polynomial vanishing at $0$ of
degree $\le2$; any two such polynomials agree near $0$, hence coincide, and
by continuity ($V_S(0^+)=0$) the agreement extends to the closed intervals
$[0,R']$, $R'<\rho$, as Definition~\ref{def:polreach} requires. Taking the
supremum over certifiable radii gives $\polreach(S)\ge\conreach(S)$.
\end{proof}

\subsection{Proof of Proposition~\ref{prop:planar}}\label{app:pfplanar}

\begin{proof}
Partition the tube $W_t$ by the location of the foot, as in the proof of
Theorem~\ref{thm:main}; the set
of points with two feet is null.

(a) Arc points away from reentrant corners. If $z$ lies on an arc outside
the $\ell/2$-neighbourhoods of the reentrant corners, its fibers have length
$\ge\rho>t$ by (P2), and the Jacobian $1+s\kappa$ is positive for $s<t<t_0$.
The normal map $(\sigma,s)\mapsto z(\sigma)+s n(\sigma)$ is moreover injective
on this range: if two parameter pairs $(\sigma_i,s_i)$ produced the same point
$y$, then $s_i<t<\rho\le\delta_S(z(\sigma_i),n(\sigma_i))$ by (P2), and
Lemma~\ref{lem:deltameas} makes each $z(\sigma_i)$ the unique foot of $y$;
hence $z(\sigma_1)=z(\sigma_2)$, and with it $n(\sigma_1)=n(\sigma_2)$ (on a
regular arc the outward normal is determined by the foot) and $s_1=s_2$. The
area formula for the normal map, with Jacobian $1+s\kappa(\sigma)$, then shows that this part of the
tube has area $\int(t+\tfrac12\kappa t^2) ds$ over the corresponding arc
portion.

(b) Convex corners. Let $\theta_v<\pi$. Exterior points near $v$ project either
onto one of the two arcs, or onto $v$ itself; the latter set is the circular sector of
radius $t$ spanned by the exterior normal fan at $v$, of opening $\pi-\theta_v$, whose
area is $\tfrac12(\pi-\theta_v)t^2$. Moreover no fiber over the two incident arcs is
cut near $v$: (P2) covers the fibers over the incident arcs and the normal fan
at $v$, so every such fiber has length $\ge\rho>t$, and the argument of (a)
applies up to $v$ with no deficit. Hence a convex corner contributes exactly $\tfrac12(\pi-\theta_v)t^2$ and the
arcs contribute their full $\int(t+\tfrac12\kappa t^2)ds$ up to $v$.

(c) Reentrant corners. (The straight walls inside the $\ell/2$-neighbourhood
of a reentrant corner are accounted for here and only here: their full
fiber integral is computed below as the perimeter term minus the triangular
deficit, so there is no overlap with part (a).) Let $\theta_v>\pi$ and
$\beta:=2\pi-\theta_v\in(0,\pi)$
be the opening of the exterior wedge. By (P1), near $v$ the set is exactly the cone
$v+C_v$, $C_v$ the solid wedge of opening $\theta_v$. Put $v$ at the origin and the
two walls along the rays $\{a e_1: a\ge0\}$ and $\{a(\cos\beta,\sin\beta):a\ge0\}$, the
exterior wedge being $\{0<\arg y<\beta\}$. Exterior points project onto the two walls
(never onto $v$: for $y$ at polar radius $r_y$ inside the wedge,
$\dist(y,\text{wall})\le r_y\sin(\beta/2)<r_y=|y-v|$), the wall being selected by the
bisector $\{\arg y=\beta/2\}$. The fiber over the wall point $(a,0)$ is the vertical
segment stopped by the bisector, so its length is
\begin{equation}\label{eq:fiberwedge}
\delta(a)=a \tan\frac{\beta}{2}\qquad(\text{exactly linear, by homogeneity}).
\end{equation}
By (P1), $S$ coincides with the wedge $v+C_v$ on $\bar B(v,\ell)$, and
this alone localizes the fibers, since $t<t_0\le\ell/4$: for a wall point
$z$ with $|z-v|\le\ell/2$ and $y=z+su$ with $0<s\le t$, every foot $w$ of
$y$ in $S$ satisfies
$|w-v|\le\dist(y,S)+|y-v|\le s+(\ell/2+s)<\ell$, hence is a point of the
wedge; conversely every wedge foot of $y$ lies within
$|y-v|+\dist(y,v+C_v)\le\ell/2+2s<\ell$ of $v$, hence is a point of $S$.
The distances and feet therefore coincide, exactly as in
Lemma~\ref{lem:local}: these wall fibers are those of the pure wedge, and
the computation below is exact.
Comparing with the ``uncut'' value $t$ we obtain --- for
$t\le(\ell/2)\tan(\beta/2)$, which keeps $a^*=t\cot(\beta/2)$ inside the treated
wall of length $\ell/2$ --- a deficit per wall of
\[
\int_0^{a^*}\bigl(t-a\tan\tfrac\beta2\bigr) da=\frac{t^2}{2}\cot\frac\beta2,
\qquad a^*=t\cot\frac\beta2 ,
\]
hence a total deficit $t^2\cot(\beta/2)$ for the two walls. Since
$\cot(\beta/2)=\cot(\pi-\theta_v/2)=-\cot(\theta_v/2)$, the reentrant corner
contributes $+\cot(\theta_v/2) t^2$ (a negative amount) on top of the perimeter term,
and no vertex sector.

Adding (a), (b), (c) over the finitely many arcs and corners, the $t$-coefficient is
$\mathcal H^1(\partial S)=\Per(S)$ and the $t^2$-coefficient is \eqref{eq:planar}. For
\eqref{eq:planarGB} use Gauss--Bonnet in the form
$\int_{\partial S_{\mathrm{reg}}}\kappa ds+\sum_v(\pi-\theta_v)=2\pi\chi(S)$: substituting
$\tfrac12\int\kappa=\pi\chi-\tfrac12\sum_v(\pi-\theta_v)$ into \eqref{eq:planar}, the
convex terms cancel and each reentrant corner leaves
$\cot(\theta_v/2)-\tfrac12(\pi-\theta_v)=\psi(\theta_v)$.
\end{proof}

\subsection{Proof of Proposition~\ref{prop:p2auto}}\label{app:pfauto}

\begin{proof}
Set $K:=\partial S\setminus\bigcup_{v: \theta_v>\pi}B(v,\ell/2)$, a
compact subset of $\partial S$. Suppose the conclusion fails: there are
$(z_n,u_n)\in N(S)$ with $z_n\in K$ and
$\delta_n:=\delta_S(z_n,u_n)\to0$. Passing to a subsequence, $z_n\to z\in
K$. Two observations are used throughout. First, for every $s>\delta_n$
there is a competitor $q\in\partial S$ with $|z_n+su_n-q|<s$, and any
such $q$ satisfies $|z_n-q|\le|z_n+su_n-q|+s<2s$. Indeed, by
Lemma~\ref{lem:deltameas}, $\dist(z_n+su_n,S)<s$. If $z_n+su_n\notin S$,
take for $q$ a foot of $z_n+su_n$ in $S$: feet of exterior points lie on
$\partial S$. If $z_n+su_n\in S$, let
$r_*:=\inf\{r>0: z_n+ru_n\in S\}$; since $\dist(z_n+ru_n,S)=r>0$ for
$0<r<\delta_n$, we have $\delta_n\le r_*\le s$, and
$q:=z_n+r_*u_n\in\partial S$, being a limit of ray points inside and
outside the closed set $S$; here $|z_n+su_n-q|=s-r_*<s$ directly. Second,
the contradiction scheme: in each case below we exhibit $s_0>0$,
depending only on the limit $z$ and the local constants there, such that,
for all large $n$, no $q\in\partial S$ with $|z_n-q|<2s$ can satisfy
$|z_n+su_n-q|<s$ when $0<s\le s_0$. Choosing $s\in(\delta_n,s_0]$ ---
possible as soon as $\delta_n<s_0$ --- this contradicts the first
observation; hence $\delta_n\ge s_0$ for all large $n$, contradicting
$\delta_n\to0$. Since the arcs are closed and meet only
at endpoints, and reentrant corners lie at distance $\ge\ell/2$ from $K$,
either $z$ lies in the relative interior of one arc, or $z$ is a vertex
$v$ with $\theta_v\le\pi$.

Case 1: $z$ in the relative interior of $\Gamma_i$. Choose $d>0$ so small
that $\bar B(z,4d)$ avoids the endpoints of $\Gamma_i$ and the other arcs,
and $\Gamma_i\cap\bar B(z,4d)$ is a connected subarc through $z$ (the rest
of $\Gamma_i$ is compact and does not contain $z$). Parametrizing this
subarc by arclength and Taylor-expanding, there is $C\ge1$ with
\begin{equation}\label{eq:graphest}
|\langle\nu(p), q-p\rangle|\le C |q-p|^2
\qquad\text{for }p,q\in\Gamma_i\cap\bar B(z,4d),
\end{equation}
$\nu$ the outer unit normal (the chord--arc comparison on a short $C^2$
subarc makes $C$ uniform). For large $n$, $z_n$ lies on this subarc, where
the unique proximal unit normal is $\nu(z_n)$: a unit $u$ with
$\langle u,\tau(z_n)\rangle\ne0$ is approached quadratically by boundary
points on the side it leans to, and $-\nu(z_n)$ points into
$\operatorname{int}S$; so $u_n=\nu(z_n)$. Set
$s_0:=\min\{d,\tfrac1{2C}\}$, take $0<s\le s_0$ and let $q\in\partial S$
satisfy $|z_n-q|<2s$ and $|z_n+s\nu(z_n)-q|<s$. For $n$ large,
$|z_n-z|<d$, so $|q-z|\le|q-z_n|+|z_n-z|<2s+d\le3d$ and $q$ lies on the
subarc; then \eqref{eq:graphest} gives
\[
|z_n+s\nu(z_n)-q|^2
= s^2-2s\langle\nu(z_n),q-z_n\rangle+|q-z_n|^2
\ge s^2+|q-z_n|^2(1-2Cs)\ge s^2,
\]
contradicting $|z_n+s\nu(z_n)-q|<s$; by the scheme above, $\delta_n\ge
s_0$ for all large $n$.

Case 2: $z=v$, a vertex with $\theta:=\theta_v\in(0,\pi]$. Let
$\Gamma_\pm$ be the incident arcs, $\tau_\pm$ their unit tangents at $v$
pointing away from $v$, and $\nu_\pm$ the outer normals at $v$, so that
$\langle\tau_+,\tau_-\rangle=\cos\theta$,
$\langle\nu_\pm,\tau_\mp\rangle=-\sin\theta\le0$ and
$\langle\nu_\pm,\tau_\pm\rangle=0$. Choose $d$ as in Case~1 (both subarcs
through $v$ connected in $\bar B(v,4d)$, no other arcs), and $C\ge1$ such
that \eqref{eq:graphest} holds along each subarc and, in arclength
parametrization from $v$, points of the subarcs and their normals expand
as
\[
p=v+a \tau_\pm+e_\pm(a),\qquad
\nu(p)=\nu_\pm+b_\pm(a) \tau_\pm+\eta_\pm(a),
\]
with $|e_\pm(a)|\le Ca^2$, $|b_\pm(a)|\le Ca$ and
$|\eta_\pm(a)|\le Ca^2$: the first-order variation of the unit
normal is tangent to the curve, and the $\nu_\pm$-component of the deviation is
$-\tfrac12|\nu(p)-\nu_\pm|^2=O(a^2)$ because both vectors are unit. For
large $n$, $z_n$ lies on one of the subarcs, say on the $+$ side, and
every obstruction at length $s\le s_0$ lies on one of the two subarcs,
since $|q-z_n|<2s$. We treat the position of $z_n$ in two subcases, with
$s_0$ and $d$ chosen below, small in terms of $C$ and $\theta$ only.

Subcase 2a: $z_n=v$. Here $u_n$ may be any direction of the normal fan,
but proximality forces $\langle u_n,\tau_\pm\rangle\le0$: if
$\langle u_n,\tau_+\rangle=c_0>0$, the point $q=v+a\tau_++e_+(a)$
with $a=sc_0$ satisfies $|v+su_n-q|^2\le s^2-s^2c_0^2+O(s^3)<s^2$ for
small $s$, so $\delta_S(v,u_n)=0$, contradicting $(v,u_n)\in N(S)$. Then,
for a competitor $q=v+a\tau_\pm+e_\pm(a)$ on either subarc ---
where $a\le2|q-v|<4s$ by the chord--arc comparison, after shrinking
$d$ ---
\[
\begin{aligned}
|v+su_n-q|^2
&=s^2-2sa \langle u_n,\tau_\pm\rangle
-2s\langle u_n,e_\pm(a)\rangle+|a\tau_\pm+e_\pm(a)|^2\\
&\ge s^2+a^2\bigl(1-2Cs-2Ca\bigr)\ge s^2
\end{aligned}
\]
for $s\le\tfrac1{32C}$ (so that also $a<4s\le\tfrac1{8C}$),
uniformly over the fan --- so no such $q$ exists.

Subcase 2b: $z_n=v+a_n\tau_++e_+(a_n)$ with $a_n>0$. As in
Case~1, $u_n=\nu(z_n)=\nu_++b_n\tau_++\eta_n$ with $|b_n|\le Ca_n$ and
$|\eta_n|\le Ca_n^2$. Obstructions on the same subarc are excluded by
the Case~1 computation. For an obstruction $q=v+a\tau_-+e_-(a)$ on
the other subarc (where $a\le2|q-v|\le4s+4a_n$ by the chord--arc
comparison),
\[
z_n+su_n-q=a_n' \tau_++s\nu_+-a \tau_-+R,
\qquad
a_n':=a_n+sb_n,\quad
R:=e_+(a_n)+s\eta_n-e_-(a),
\]
with $a_n'\in[\tfrac12a_n,\tfrac32a_n]$ for $s\le\tfrac1{2C}$
and $|R|\le2C(a_n^2+a^2)$. Writing $M:=a_n'\tau_++s\nu_+-a\tau_-$ and using
$\langle\tau_+,\tau_-\rangle=\cos\theta$,
$\langle\nu_+,\tau_+\rangle=0$,
$\langle\nu_+,\tau_-\rangle=-\sin\theta$,
\[
|M|^2=s^2+a_n'^2+a^2-2a_n'a\cos\theta+2sa\sin\theta
 \ge s^2+c_\theta (a_n'^2+a^2)
 \ge s^2+\tfrac{c_\theta}4(a_n^2+a^2),
\]
where $c_\theta:=\min\{1, 1-\cos\theta\}>0$: for $\cos\theta\ge0$ the
difference
$a_n'^2+a^2-2a_n'a\cos\theta-(1-\cos\theta)(a_n'^2+a^2)
=\cos\theta (a_n'-a)^2$ is nonnegative, for $\cos\theta<0$ the
cross term is itself nonnegative, and $2sa\sin\theta\ge0$ for
$\theta\in(0,\pi]$. Since $|M|\le s+a_n'+a\le6(s+a_n)$,
\[
|z_n+su_n-q|^2\ge|M|^2-2|M| |R|
\ge s^2+\Bigl(\tfrac{c_\theta}4-24C(s+a_n)\Bigr)(a_n^2+a^2)
\ge s^2
\]
once $s_0$ and $d$ are small enough that $24C(s+a_n)\le c_\theta/4$.
So again no such $q$ exists; by the scheme above, $\delta_n\ge s_0$ for
all large $n$, and the proposition follows.
\end{proof}

\subsection{Proof of Proposition~\ref{prop:polreachL}}\label{app:pfL}

\begin{proof}
Write $P(t):=8t+\bigl(\tfrac{5\pi}4-1\bigr)t^2$. All fibers of $L$ were
computed in Example~\ref{ex:Lshape}: the six units of outer boundary carry
infinite fibers, the fibers of the five convex fans are not cut before
length $1$, and the two reentrant walls, parametrized by the distance
$a\in(0,1)$ to the corner, have $\delta(a)=a$. Partitioning the tube by
feet and computing each piece exactly (the walls in the isometric
coordinates of Proposition~\ref{prop:cone}) gives, for $0<t<1$,
\[
V_L(t)=6t+2\int_0^1\min\{t,a\} da+\tfrac{5\pi}{4}t^2=P(t);
\]
hence $V_L=P$ on $[0,1]$ by continuity, and $\polreach(L)\ge1$. Let $t>1$ be sufficiently close
to $1$. Relative to the interval $(0,1)$, the feet decomposition of the
tube changes in exactly two ways.
First, the two reentrant walls saturate: their contribution is
$2\int_0^1\min\{t,a\} da=1$ instead of $2t-t^2$, which adds $(t-1)^2$ to
$P(t)$. Second, the fibers of the fans at $(2,1)$ and $(1,2)$ begin to cut
each other across the perpendicular bisector of the two corners: uncut,
each fan would cover a quarter disc of radius $t$, and the two quarter
discs now overlap. Nothing else changes: all remaining fibers are
infinite, and the quarter disc at $(2,1)$ lies in
$\{x\ge2, y\ge1\}$ and the one at $(1,2)$ in $\{x\ge1, y\ge2\}$, so they
meet the notch square, the side strips and the outer fans only in null
sets. Hence
\[
V_L(t) = 6t+1+\tfrac{5\pi}4t^2-|\mathcal O_t|
 = P(t)+(t-1)^2-|\mathcal O_t| ,
\]
where, placing the origin at $(2,2)$,
\[
\mathcal O_t:=\bigl\{(X,Y)\in[0,\infty)^2:\
X^2+(Y+1)^2\le t^2, (X+1)^2+Y^2\le t^2\bigr\}.
\]
The two circles cross on the diagonal at
$X=Y=a(t):=\tfrac12\bigl(\sqrt{2t^2-1}-1\bigr)$, so
\[
|\mathcal O_t|=\int_0^{a(t)}\Bigl(\sqrt{t^2-X^2}-1\Bigr) dX
+\int_{a(t)}^{t-1}\sqrt{t^2-(X+1)^2} dX .
\]
Write $t=1+s$. Then $a(1+s)=s-\tfrac{s^2}2+O(s^3)$. In the first integral,
$\sqrt{(1+s)^2-X^2}-1=s-\tfrac{X^2}2+O(sX^2)$ on $[0,a]$, so it equals
$sa-\tfrac{a^3}6+O(s^4)=s^2-\tfrac23s^3+O(s^4)$. In the second,
$t^2-(X+1)^2=(s-X)(2+s+X)=2(s-X)\bigl(1+O(s)\bigr)$ on $[a,s]$, so it
equals
$\sqrt2\cdot\tfrac23(s-a)^{3/2}\bigl(1+O(s)\bigr)
=\tfrac13s^3+O(s^4)$, using $s-a=\tfrac{s^2}2+O(s^3)$. Hence
\[
|\mathcal O_{1+s}|=s^2-\tfrac13s^3+O(s^4),
\qquad
V_L(1+s)-P(1+s)=\tfrac13s^3+O(s^4)>0
\]
for all small $s>0$: the quadratic corrections cancel exactly, and the
first deviation is cubic. If $V_L$ agreed with a polynomial $q$ of degree
at most two on $(0,R)$ for some $R>1$, then $q=P$, since the two
polynomials agree on $(0,1)$; this contradicts the cubic term. Hence
$\polreach(L)=1$.
\end{proof}

\subsection{Proofs for Section~\ref{sec:sharp}}\label{app:pfrest}

\begin{proof}[Proof of Proposition~\ref{prop:cusp}]
Feet dichotomy. Let $\Gamma_{\rm gap}$ be the union of the two open
gap walls, the floor segment $(0,1)\times\{0\}$ and the arc
$G^\circ:=\{(x,x^2):0<x<1\}$, and let
$\Gamma':=\partial S\setminus\Gamma_{\rm gap}$. Up to the null set of multi-feet
points, $V_S(t)=P(t)+F(t)$ with $P(t):=\vol(W_t(\Gamma'))$ and
$F(t):=\vol(W_t(\Gamma_{\rm gap}))$ --- a partition by the location of the foot,
with no ambient cut.

$P$ is a polynomial of degree $\le2$ on $(0,\rho_0)$, where $\rho_0:=\tfrac14$.
We claim $\delta_S\ge\rho_0$ on $N(S)\cap(\Gamma'\times\Sph^{1})$, splitting
$\Gamma'$ in two.

(i) Supported directions. If $z\in\partial S$, $u$ is a unit vector and
$S\subseteq\{w:\langle w-z,u\rangle\le0\}$, then for every $w\in S$,
$|z+su-w|^2=s^2-2s\langle u,w-z\rangle+|w-z|^2 \ge s^2+|w-z|^2,$
so $\dist(z+su,S)=s$ for all $s>0$ and $\delta_S(z,u)=\infty$; moreover the
inequality is strict for $w\ne z$. Since
$S\subseteq\{x\ge0\}\cap\{x\le1\}\cap\{y\ge-1\}\cap\{y\le1\}$, this applies to
every fiber over the four outer edges of $\Gamma'$, to every direction in the fan
at each of the three outer corners $(0,-1)$, $(1,-1)$, $(0,1)$ (a nonnegative
combination of the two adjacent outer normals), and to the fiber at the origin, whose only proximal direction is
$u=(-1,0)$: proximality being a local property, small $s$ suffices below. For
$u=(-a,b)$ with $b\ne0$ and all sufficiently small $s>0$, the left-edge point
$(0,sb)\in S$ is strictly closer to $su$ than the origin is; for $u=(a,b)$ with
$a>0$, $b\ne0$, and small $s>0$, the floor point $(sa,0)\in S$ gives
$|su-(sa,0)|=s|b|<s$; and for $u=(1,0)$ and $0<s<1$ the point $su=(s,0)$ itself
lies in $S$.

(ii) The mouth corners $(1,0)$ and $(1,1)$. Write $S=B_1\cup B_2$ with
$B_1:=[0,1]\times[-1,0]$ and $B_2:=\{(x,y): 0\le x\le1, x^2\le y\le1\}$; both
are convex ($B_2$ is an intersection of the convex sets $\{y\ge x^2\}$,
$\{y\le1\}$, $\{0\le x\le1\}$). Let $z$ be a mouth corner, contained in the block
$B$, and let $u\in\Nor(S,z)$ with $\delta:=\delta_S(z,u)<\infty$. Since $u$ is
proximal for $S$ at $z$, it is proximal for $B$ at $z$, so by convexity
$\langle w-z,u\rangle\le0$ for all $w\in B$, and by the strict inequality in (i)
no point of $B\setminus\{z\}$ can realize the distance from points of the fiber.
For $s>\delta$ we have $\dist(z+su,S)<s$ (Lemma~\ref{lem:deltameas}), so any foot
$w$ of $z+su$ lies in the other block $B'$ and satisfies
$|z-w|\le s+\dist(z+su,S)<2s$; letting $s\downarrow\delta$ gives
$\dist(z,B')\le2\delta$. Finally $\dist\bigl((1,1),B_1\bigr)=1$ and, for
$w=(x,y)\in B_2$,
\[
|w-(1,0)|^2=(1-x)^2+y^2 \ge (1-x)^2+x^4 \ge (1-x)^2+x-\tfrac12
=\bigl(x-\tfrac12\bigr)^2+\tfrac14 \ge \tfrac14
\]
(using $y\ge x^2$ and $x^4\ge x-\tfrac12$ on $[0,1]$, whose minimum
$4^{-4/3}-4^{-1/3}+\tfrac12>0$ is attained at $x=4^{-1/3}$), so
$\dist\bigl((1,0),B_2\bigr)\ge\tfrac12$. Hence $\delta\ge\tfrac14$ at both mouth
corners, proving the claim.

Consequently $\min\{t,\delta_S\}=t$ on $\eta':=N(S)\cap(\Gamma'\times\Sph^{1})$
for $0<t<\rho_0$, and \eqref{eq:HLW} gives
\[
P(t)=\Theta_1(\eta') t+\tfrac12 \Theta_0(\eta') t^{2} \in\R_2[t]
\qquad\text{on }(0,\rho_0),
\]
with finite coefficients as in Proposition~\ref{prop:regular} (the set is
$r$-bounded: compact feet, $\delta_S\ge\rho_0$).

$F$ is the covered gap area, up to $O(t^2)$. First, feet of points of $H$
lie on the open walls: the floor point $(x,0)$ strictly beats every point of
$\{1\}\times[-1,0]$ (whose distance is $\ge\sqrt{(1-x)^2+y^2}>y$); the vertical
competitor on the arc gives $\dist(p,G)\le x^2-y\le1-y\le|p-(1,1)|$, so the arc
beats the corner $(1,1)$; the minimizer on the arc is never the endpoint
$x^\ast=1$ (as $\varphi'(1)>0$, see Lemma~\ref{lem:cuspdepth}) nor $x^\ast=0$
(as $\varphi'(0)=-2a<0$), and the origin is never a foot of a gap point (the
floor is strictly closer). Second, fibers over the floor are vertical and fibers
over arc points with $x^\ast\le1-t$ increase the $x$-coordinate by at most their
length $\le t$, so both stay inside $H$ up to the cut. Third, for fixed $a$ the set of heights covered from the arc,
$\{y\in(0,a^2):\dist((a,y),G)\le t\}$, is an interval adjacent to the upper
wall: the function $y\mapsto\dist\bigl((a,y),\{(x,y):y\ge x^2\}\bigr)$ is
convex --- a distance to a convex set --- nonnegative, and vanishes at
$y=a^2$, hence nonincreasing on $(0,a^2]$; and on the range considered it
coincides with the distance to $G$, the nearest parabola points having
$0\le x^\ast\le1$. Consequently, writing
$\mathrm{covered}(a):=|\{y\in(0,a^2):\dist((a,y),S)\le t\}|
=\min\{a^2, t+\mathrm{par}(a,t)\}$, where
$\mathrm{par}(a,t):=\min\{a^2, \sup\{a^2-y: 0<y<a^2, \dist ((a,y),G)\le t \}\},$ $(\sup\emptyset:=0)$
is the height covered from the arc,
\[
F(t)=\int_0^{1-2t}\mathrm{covered}(a) da+O(t^2):
\]
the discrepancies (columns $a\in(1-2t,1)$, whose covered height is at most
$t+\sqrt{17} t$ by Lemma~\ref{lem:cuspcompare}, and fiber portions beyond the
mouth over arc points with $x^\ast>1-t$) are each contained in the
$t$-neighbourhood of a boundary piece of length $O(t)$, hence have area
$O(t^2)$: the $t$-neighbourhood of a rectifiable curve of length $\ell$ has
area at most $4\pi(\ell+2t)t$, by covering the curve with
$\lceil\ell/t\rceil+1$ balls of radius $2t$ centred at points $t$-dense
along it.

Expansion. For $a\le3\sqrt t$, Lemma~\ref{lem:cuspcompare} gives
$\min\{a^2,2t\}\le\mathrm{covered}(a)\le\min\{a^2,2t\}+(\sqrt{1+16a^2}-1)t
\le\min\{a^2,2t\}+72t^2$, so
\[
\int_0^{3\sqrt t}\mathrm{covered}(a) da
=\Bigl(6-2\sqrt2+\tfrac{2\sqrt2}{3}\Bigr)t^{3/2}+O(t^{5/2}).
\]
For $3\sqrt t<a\le1-2t$: by Lemma~\ref{lem:cuspcompare},
$\mathrm{par}(a,t)\le t\sqrt{1+16a^2}\le\sqrt{17} t<a^2$, so the supremum
defining $\mathrm{par}$ is attained at an interior point of the column at
distance exactly $t$ from $G$ (continuity of the distance), and
Lemma~\ref{lem:cuspdepth} with $h=t$ gives
$\mathrm{par}(a,t)=t\sqrt{1+4a^2}+O(t^2)$ uniformly; moreover
$t+\mathrm{par}\le(1+\sqrt{17})t<a^2$, so the minimum is the second argument, and
\[
\int_{3\sqrt t}^{1-2t}\bigl(t+\mathrm{par}(a,t)\bigr) da
=t\int_0^{1}\bigl(1+\sqrt{1+4a^2}\bigr) da-6 t^{3/2}+O(t^{2}),
\]
since $\int_0^{3\sqrt t}(1+\sqrt{1+4a^2}) da=6\sqrt t+O(t^{3/2})$ and the
trimmed sliver contributes $t\int_{1-2t}^{1}(\cdot) da=O(t^2)$. Adding the two
ranges,
\[
F(t)=A t-\frac{4\sqrt2}{3} t^{3/2}+O(t^2),
\qquad A:=\int_0^{1}\bigl(1+\sqrt{1+4a^2}\bigr) da .
\]

Conclusion. $V_S=P+F=q(t)-\tfrac{4\sqrt2}{3}t^{3/2}+O(t^2)$ with
$q\in\R_2[t]$. If $V_S$ agreed with a polynomial $p$ on some $(0,\varepsilon)$,
then $(p-q)(t)/t^{3/2}\to-\tfrac{4\sqrt2}{3}$; but for a polynomial $p-q$
vanishing at $0$ this quotient tends to $0$ or $\pm\infty$. Contradiction.
\end{proof}

\begin{proof}[Proof of Proposition~\ref{prop:twodiscs}]
Write $A(r):=2\pi r^2-2r^2\arccos(h/r)+2h\sqrt{r^2-h^2}$, so that
$\vol(S_t)=A(R+t)$; $A$ is real-analytic on $(h,\infty)$. If $V_S$ agreed with a
polynomial $p$ on some $(0,\varepsilon)$, then $A(r)=p(r-R)+A(R)$ on
$(R,R+\varepsilon)$, hence on all of $(h,\infty)$ by the identity theorem for
real-analytic functions. We show $A$ has a nonzero $(r-h)^{3/2}$-term at $r=h$, which
no polynomial has. Put $u:=r-h\downarrow0$. Using the standard expansions of
$\arccos(1-x)$ and $\sqrt{1+x}$, a direct computation gives (the
$u^{1/2}$-terms produced by the two terms of the lens area cancel)
$A(h+u)=2\pi(h+u)^2-\frac{8\sqrt2}{3} h^{1/2} u^{3/2}+O(u^{5/2}),$
consistent with the classical fact that the lens of two just-overlapping discs is the
union of two circular segments of sagitta $u$, each of area
$\tfrac{4\sqrt2}{3}\sqrt{h} u^{3/2}(1+o(1))$. Since
$-\tfrac{8\sqrt2}{3}h^{1/2}\ne0$, $A$ is not a polynomial on $(h,\infty)$:
contradiction.
\end{proof}

\begin{proof}[Proof of Proposition~\ref{prop:cantor}]
Two preliminaries: every gap of $K$ is bounded
by two rays of the fan, and gaps of arbitrarily small angular width $g$ occur; the
fiber over a point at radius $r$ on a ray bounding a gap of width $g$, in the
direction of that gap, is cut by the gap's bisector at length $r\tan(g/2)$.
Suppose $S$ admitted a conic certificate of radius $\rho>0$ with data
$\Sigma$, $(r_\sigma)$, $(C_\sigma)$.

Case A: every $\sigma\in\Sigma$ has $|\sigma|>2r_\sigma$. Set
$r^\ast:=\min\bigl\{\tfrac12, \min_\sigma(|\sigma|-r_\sigma)\bigr\}>0$,
with $\min_\emptyset:=+\infty$ (so $r^\ast=\tfrac12$ when
$\Sigma=\emptyset$).
Every point $z$ of $S$ at radius $r^\ast$ lies outside all cores:
$|z-\sigma|\ge|\sigma|-r^\ast\ge r_\sigma$. So (C2) applies at $z$;
choosing a gap with $r^\ast\tan(g/2)<\rho$ contradicts it.

Case B: some $\sigma\in\Sigma$ has $|\sigma|\le2r_\sigma$. By (C1),
$S\cap\bar B(\sigma,3r_\sigma)=(\sigma+C_\sigma)\cap\bar B(\sigma,3r_\sigma)$.

If $\sigma=0$: first $3r_\sigma\le1$, since otherwise picking $\omega\in K$
gives $\omega\in C_0$, whence
$\tfrac{1+3r_\sigma}{2} \omega\in C_0\cap\bar B(0,3r_\sigma)=
S\cap\bar B(0,3r_\sigma)$, contradicting the truncation at radius $1$. Then
$C_0$ coincides with the Cantor fan cone on $\bar B(0,3r_\sigma)$, and
since a cone is determined by its germ at the apex, $C_0$ is the Cantor
fan cone itself --- whose link has gaps of arbitrarily small width,
contradicting Theorem~\ref{thm:link}, which forces every gap to have width
at least $2\arctan(\rho/r_\sigma)>0$.

If $\sigma\ne0$: write $\sigma=|\sigma|e^{i\phi_0}$ with $\phi_0\in K$,
$0<|\sigma|\le1$ (as $\Sigma\subset\partial S$). Pick
$\phi_1\in K\setminus\{\phi_0\}$ and, for
$0<\varepsilon\le\tfrac23 r_\sigma$,
$z_\varepsilon:=\varepsilon e^{i\phi_1}\in S\cap\bar B(\sigma,3r_\sigma)$
(as $|z_\varepsilon-\sigma|\le\varepsilon+2r_\sigma<3r_\sigma$). Then
$z_\varepsilon-\sigma\in C_\sigma$, so with $\lambda:=\tfrac98$ the point
$w:=\sigma+\lambda(z_\varepsilon-\sigma)$ belongs to $\sigma+C_\sigma$, and
$|w-\sigma|=\lambda|z_\varepsilon-\sigma|\le\tfrac98(\varepsilon+2r_\sigma)
\le3r_\sigma$, so $w\in S$. But
$w=-(\lambda-1)|\sigma|e^{i\phi_0}+\lambda\varepsilon e^{i\phi_1}$: letting
$\varepsilon\downarrow0$, $w$ has positive radius and direction converging
to $-e^{i\phi_0}$, whose angle lies outside $[0,\tfrac\pi2]\supseteq K$;
for small $\varepsilon$ this contradicts $w\in S$. Hence no radius is
certifiable: $\conreach(S)=0$. (Equivalently, in the conic zone the
normalized fiber lengths $\tilde\delta$ are not bounded below, in line with
Theorem~\ref{thm:link}.)

The parallel volume admits a bilateral estimate: there are $0<c_1\le c_2$ with
\[
c_1 t^{1-D} \le V_S(t) \le c_2 t^{1-D},\qquad 0<t\le t_0 .
\]
We first compare tube sections with linear neighbourhoods of $K$: write
$K_\varepsilon:=\{\phi\in[0,\pi]: \dist(\phi,K)\le\varepsilon\}$
(Euclidean distance on $[0,\pi]$) and, for
$0<r\le1$, $A_r:=\{\phi\in[0,\pi]: re^{i\phi}\in S_t\}$. We claim
$K_{t/r} \subseteq A_r \subseteq K_{(\pi/2) t/r}$ for $2t\le r\le1 .$
If $\dist(\phi,K)\le t/r$, pick $\psi\in K$ with $|\phi-\psi|\le t/r$; then
$\dist(re^{i\phi},S)\le|re^{i\phi}-re^{i\psi}|\le r|\phi-\psi|\le t$. Conversely
let $re^{i\phi}\in S_t$; its nearest point of $S$ lies on some segment
$\{\rho e^{i\psi}:0\le\rho\le1\}$, $\psi\in K$, and the distance to that segment
is at least the distance to the full ray, which is
$r\sin\bigl(\min\{|\phi-\psi|,\tfrac\pi2\}\bigr)\ge
\tfrac{2r}{\pi}\min\{|\phi-\psi|,\tfrac\pi2\}$; since this is $\le t\le r/2$,
the minimum is not $\tfrac\pi2$ and $|\phi-\psi|\le\tfrac\pi2 \tfrac tr$.

For the ternary Cantor set, $\lambda_1(K_\varepsilon)\asymp\varepsilon^{1-D}$
for $0<\varepsilon\le1$ (see e.g. \cite[Ch.~2]{Falconer} or \cite{LvF}). In
polar coordinates the annulus $\{\tfrac12\le|y|\le1\}$ contributes
$\int_{1/2}^1 r \lambda_1(A_r) dr\ge\int_{1/2}^1
r \lambda_1(K_{t/r}) dr\gtrsim t^{1-D}$, giving the lower bound; for the upper
bound, the disc $B(0,2t)$ contributes $O(t^2)$; the radii $r\in(2t,1]$ contribute
$\int_{2t}^{1}r \lambda_1(K_{(\pi/2)t/r}) dr\lesssim
\int_{2t}^{1}r (t/r)^{1-D} dr=O(t^{1-D})$, where tube points at radius $r$
with angle outside $[0,\pi]$ (slightly negative angles near the ray $\phi=0$)
are not counted in $A_r$ but have angular width $O(t/r)$ by the same
computation, contributing $\int_{2t}^1 r O(t/r) dr=O(t)=o(t^{1-D})$; and the
outer rim: if $|y|>1$ and $\dist(y,S)\le t$, its nearest point
$\rho e^{i\psi}\in S$ has $\rho\ge|y|-t>1-t$, so
$|\rho e^{i\psi}-e^{i\psi}|=1-\rho\le t$ and $y$ lies in the
$2t$-neighbourhood of $E:=\{e^{i\phi}:\phi\in K\}$; since $\phi\mapsto e^{i\phi}$
is bi-Lipschitz on $[0,\pi/2]$, the two-sided estimate
$\lambda_1(K_\varepsilon)\asymp\varepsilon^{1-D}$ transfers: the covering
numbers of $E$ at scale $\varepsilon$ are of order $\varepsilon^{-D}$, whence
$\lambda_2(E_{2t})=O(t^{2-D})$. Since
$1-D<\min\{2-D,2\}$ the annulus dominates.

Since $1-D<1$, $V_S(t)/t\to\infty$, while any polynomial $p$ with $p(0)=0$ has
$p(t)/t\to p'(0)<\infty$: hence $V_S$ agrees with no polynomial near $0$, the
outer Minkowski content is infinite, and $\polreach(S)=0$.
\end{proof}

\end{document}